
\documentclass[final,leqno]{siamltex}

\usepackage{amsmath,amsfonts}
\usepackage{enumerate}
\usepackage{caption}
\usepackage{subcaption}
\usepackage{graphicx}
\usepackage{epstopdf}
\usepackage{bbm}
\usepackage{float}
\usepackage{calligra}
\usepackage[ruled,vlined]{algorithm2e}
\usepackage{xparse}
\newtheorem{remark}[theorem]{Remark}


\title{On a Problem of Weighted Low Rank Approximation of Matrices}


\author{Aritra Dutta\thanks{Department of Mathematics, University of Central Florida, 4393 Andromeda Loop N,
		Orlando, FL 32816~({d.aritra2010@knights.ucf.edu}).}
        \and Xin Li\thanks{Department of Mathematics, University of Central Florida, 4393 Andromeda Loop N,
        	Orlando, FL 32816~({xin.li@ucf.edu}).}}

\begin{document}

\maketitle

\begin{abstract}
We study a weighted low rank approximation that is inspired by a problem of constrained low rank approximation of matrices as initiated by the work of Golub, Hoffman, and Stewart (Linear Algebra and Its Applications, 88-89(1987), 317-327).~Our results reduce to that of Golub, Hoffman, and Stewart in the limiting cases. We also propose an algorithm based on the alternating direction method to solve our weighted low rank approximation problem and compare it with the state-of-art general algorithms such as the weighted total alternating least squares and the EM algorithm.
\end{abstract}

\begin{keywords}
	Weighted low rank approximation, singular value decomposition, alternating direction method
\end{keywords}

\begin{AMS}
	65F30, 65K05, 49M15, 49M30
\end{AMS}

\pagestyle{myheadings}
\thispagestyle{plain}
\markboth{ARITRA DUTTA AND XIN LI}{Weighted low rank approximation of matrices}

\section{Introduction}
Let $m$ and $n$ be two natural numbers.~For an integer $r\le\min\{m,n\}$ and a matrix $A\in\mathbb{R}^{m\times n}$, the standard low rank approximation problem can be formulated as
\begin{eqnarray}
\label{pca}
\min_{\substack{{X}\in\mathbb{R}^{m\times n}\\{\rm r}({X})\le r}}\|A-{X}\|_F^2,
\end{eqnarray}
where ${\rm r}({X})$ denotes the rank of the matrix ${X}$ and $\|\cdot\|_F$ denotes the Frobenius norm of matrices.

It is well known that the solutions to this problem can be given using the singular value decompositions~(SVDs) of $A$ through the hard thresholding operations on the singular values: The solutions to~(\ref{pca}) are given by
\begin{align}\label{hardthresholding}
X^*=H_r(A):=U(A)\Sigma_r(A)V(A)^T,
\end{align}
where
$
A=U(A)\Sigma(A)V(A)^T
$
is a SVD of $A$ and $\Sigma_r(A)$ is the diagonal matrix obtained from $\Sigma(A)$ by thresholding:~keeping only $r$  largest singular values and replacing other singular values by 0 along the diagonal.~This is also referred to as Eckart-Young-Mirsky's theorem~(\cite{svd}) and is closely related to the PCA method in statistics~\cite{pca}. The solutions to~(\ref{pca}) as given in~(\ref{hardthresholding}) suffer from the fact that none of the entries of $X$ is guaranteed to be preserved in $X^*$. In many real world problems, one has good reasons to keep certain entries of $A$ unchanged while looking for a low rank approximation.
In 1987, Golub, Hoffman, and Stewart were the first to consider the following {\it constrained} low rank approximation problem~\cite{golub}:

Given $A=(A_1\;\;A_2)\in\mathbb{R}^{m\times n}$ with $A_1\in\mathbb{R}^{m\times k}$ and $A_2\in\mathbb{R}^{m\times (n-k)}$, find $\tilde{A}_2$ such that~(with $\tilde{A}_1=A_1$)
\begin{eqnarray}
(\tilde{A}_1\;\tilde{A}_2)=\arg\min_{\substack{X_1,X_2\\{\rm r}(X_1\;\;X_2)\le r\\X_1=A_1}}\|(A_1\;\;A_2)-(X_1\;\;{X}_2)\|_F^2.\label{golub's problem}
\end{eqnarray}
That is, Golub, Hoffman, and Stewart required that certain columns, $A_1,$ of $A$ must be preserved when one looks for a low rank approximation of $(A_1\;\;A_2).$ As in the standard low rank approximation, the constrained low rank approximation problem of Golub, Hoffman, and Stewart also has a closed form solution.

\begin{theorem}\cite{golub}
\label{theorem 1}
With $k={\rm r}(A_1)$ and $r\ge k$, the solutions $\tilde{A}_2$ in~(\ref{golub's problem}) are given by
	\begin{align}\label{ghs}
	\tilde{A_2}= P_{A_1}(A_2)+H_{r-k}\left(P^{\perp}_{A_1}(A_2)\right),
	\end{align}
where $P_{A_1}$ and $P^\perp_{A_1}$ are the projection operators to the column space of $A_1$ and its orthogonal complement, respectively.
\end{theorem}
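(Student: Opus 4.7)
The plan is to exploit the orthogonal decomposition induced by $P_{A_1}$ and $P_{A_1}^\perp$ to reduce the constrained problem to an unconstrained standard low rank approximation problem to which the Eckart--Young--Mirsky result (\ref{hardthresholding}) applies directly.

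First I would observe that since $X_1 = A_1$ is forced, the problem reduces to
\begin{align*}
\min_{X_2}\|A_2-X_2\|_F^2 \quad \text{subject to} \quad {\rm r}(A_1\;\;X_2)\le r.
\end{align*}
Next I would write the crucial rank identity
\begin{align*}
{\rm r}(A_1\;\;X_2)={\rm r}(A_1)+{\rm r}\bigl(P_{A_1}^\perp(X_2)\bigr)=k+{\rm r}\bigl(P_{A_1}^\perp(X_2)\bigr),
\end{align*}
which follows because the column space of $(A_1\;\;X_2)$ is the orthogonal direct sum of the column space of $A_1$ and the column space of $P_{A_1}^\perp(X_2)$ (note $P_{A_1}(X_2)$ lies in the column space of $A_1$ and hence contributes nothing new). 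Consequently the rank constraint is equivalent to ${\rm r}(P_{A_1}^\perp(X_2))\le r-k$, which requires the hypothesis $r\ge k$ to be feasible.

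Now I would decompose both $A_2$ and $X_2$ into their $P_{A_1}$ and $P_{A_1}^\perp$ components. Because $P_{A_1}$ and $P_{A_1}^\perp$ are orthogonal projections with orthogonal ranges, the Frobenius norm splits by the Pythagorean identity:
\begin{align*}
\|A_2-X_2\|_F^2 = \bigl\|P_{A_1}(A_2)-P_{A_1}(X_2)\bigr\|_F^2 + \bigl\|P_{A_1}^\perp(A_2)-P_{A_1}^\perp(X_2)\bigr\|_F^2.
\end{align*}
The first term involves only $P_{A_1}(X_2)$, which is unconstrained by the reformulated rank condition, so it can be annihilated by choosing $P_{A_1}(X_2)=P_{A_1}(A_2)$. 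The second term then becomes a standard low rank approximation problem: minimize $\|P_{A_1}^\perp(A_2)-Y\|_F^2$ subject to ${\rm r}(Y)\le r-k$, where $Y:=P_{A_1}^\perp(X_2)$. By (\ref{hardthresholding}) the optimal $Y$ is $H_{r-k}(P_{A_1}^\perp(A_2))$. Adding back the two pieces yields exactly the claimed formula (\ref{ghs}).

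The main obstacle, and the only nontrivial algebraic step, is justifying the rank identity cleanly; after that the argument is just orthogonal decomposition plus Eckart--Young--Mirsky. One should also note that since $P_{A_1}^\perp(A_2)$ may have non-unique truncated SVDs, $\tilde{A}_2$ is only unique when the $(r-k)$-th and $(r-k+1)$-th singular values of $P_{A_1}^\perp(A_2)$ are distinct, which is the same genericity caveat as in the classical Eckart--Young--Mirsky theorem.
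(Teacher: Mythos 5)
Your argument is correct. The paper itself gives no proof of this statement — it is quoted from Golub, Hoffman, and Stewart \cite{golub} — and the original proof there (echoed later in this paper's proof of Theorem~\ref{theorem 6}) proceeds by a QR factorization of $(A_1\;\;A_2)$: one passes to $Q^TX$, uses unitary invariance of the Frobenius norm, performs row/column eliminations to see that the rank constraint only touches the $\begin{pmatrix}R_{22}\\0\end{pmatrix}$ block, and applies Eckart--Young--Mirsky there, with $Q_2\begin{pmatrix}R_{22}\\0\end{pmatrix}=P_{A_1}^{\perp}(A_2)$ translating the answer back. Your route is the coordinate-free version of the same idea: the rank identity ${\rm r}(A_1\;\;X_2)=k+{\rm r}(P_{A_1}^{\perp}(X_2))$ plays the role of the elimination step, and the Pythagorean splitting of $\|A_2-X_2\|_F^2$ replaces unitary invariance, so both proofs ultimately reduce to hard thresholding of $P_{A_1}^{\perp}(A_2)$. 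What your version buys is that no basis or factorization need be chosen and the formula (\ref{ghs}) appears directly in projection form; what the QR version buys is the explicit block coordinates that the present paper reuses for the harder unconstrained/weighted arguments. One small point worth making explicit in your write-up: the subproblem in $Y=P_{A_1}^{\perp}(X_2)$ carries the additional constraint that the columns of $Y$ lie in ${\rm col}(A_1)^{\perp}$, but since $H_{r-k}\bigl(P_{A_1}^{\perp}(A_2)\bigr)$ automatically has its columns in that subspace, the unconstrained Eckart--Young--Mirsky minimizer is feasible and hence optimal, so your conclusion stands.
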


\begin{remark}\label{remark1}
	{\rm According to Section 3 of~\cite{golub}, the matrix $\tilde{A}_2$ is unique if and only if $H_{r-k}\left(P^{\perp}_{A_1}(A_2)\right)$ is unique, which means the $(r-k)$th singular value of $P^{\perp}_{A_1}(A_2)$ is strictly greater than $(r-k+1)$th singular value. When $\tilde{A}_2$ is not unique, the formula for $\tilde{A}_2$ given in Theorem~\ref{theorem 1} should be understood as the membership of the set specified by the right-hand side of~(\ref{ghs}). We will use this convention in this paper.}
\end{remark}

Inspired by Theorem~\ref{theorem 1} above and motivated by applications in which $A_1$ may contain noise, it makes more sense if we require $\|A_1-X_1\|_F$ small instead of asking for $X_1=A_1$. This leads us to consider the following problem: Let $\eta>0$, find $(\hat{X}_1\;\;\hat{X}_2)$ such that
\begin{eqnarray}\label{closeness problem}
(\hat{X}_1\;\;\hat{X}_2)=\arg\min_{\substack{X_1,X_2: \|A_1-X_1\|_F\leq \eta\\{\rm r}(X_1\;\;X_2)\le r}}\| (A_1\;\;A_2)-({X}_1\;\;{X}_2) \|_F^2.
\end{eqnarray}
Or, for a large parameter $\lambda $, consider
\begin{eqnarray}\label{unconstraint closeness}
(\hat{X}_1\;\;\hat{X}_2)=\arg\min_{\substack{X_1,X_2\\{\rm r}(X_1\;\;X_2)\le r}}\left\{ \lambda^2\|A_1-X_1\|_F^2+\| A_2-{X}_2 \|_F^2\right\}.
\end{eqnarray}
This block structure in weight matrix, where very few entries are heavily weighted and most entries stays at 1 (unweighted), is realistic in applications. For example, in the problem of background estimation in a video sequence, each frame is a column in the data matrix and the background is a low rank (ideally of rank 1) component of the data matrix. So, the weight is used to single out which columns are more likely to be the basis of background frames and the low rank constraint enforces the search for other frames that are in the background subspace.
Recent investigations in~\cite{duttaligongshah,dutta_li,dutta_li_acl} have shown that the above ``approximately'' preserving (controlled by a parameter $\lambda$) weighted low rank approximation can be more effective in solving the background modeling, shadows and specularities removal, and domain adaptation problems in computer vision and machine learning.

As it turns out, (\ref{unconstraint closeness}) can be viewed as a generalized total least squares problem~(GTLS) and can be solved in closed form as a special case of weighted low rank approximation with a rank-one weight matrix by using a SVD of the given matrix $(\lambda A_1\;\;A_2)$~\cite{markovosky,markovosky1}. As a consequence of the closed form solutions, one can verify that the solution to (\ref{golub's problem}) is the limit case of the solutions to (\ref{unconstraint closeness}) as $\lambda \to\infty$. Thus, (\ref{golub's problem}) can be viewed as a special case when ``$\lambda = \infty$''. A careful reader may also note that, problem (\ref{unconstraint closeness}) can be cast as a special case of structured low rank problems with element-wise weights~\cite{markovosky3,markovosky4}.
More specifically, we observe that (\ref{unconstraint closeness}) is contained in the following more general point-wise weighted low rank approximation problem:
\begin{eqnarray}\label{hadamard problem}
	\min_{\substack{X_1,X_2\\{\rm r}(X_1\;\;X_2)\le r}}\|\left((A_1\;\;A_2)-({X}_1\;\;{X}_2)\right)\odot(W_1\;~W_2)\|_F^2,
\end{eqnarray}
for $W_1=\lambda \mathbbm{1}$ and $W_2=\mathbbm{1}$ (a matrix whose entries are equal to 1), where $W\in\mathbb{R}^{m\times n}$ is a weight matrix and $\odot$ denotes the Hadamard product.

This is the weighted low rank approximation problem studied first when $W$ is an indicator weight for dealing with the missing data case (\cite{wibergjapan,wiberg}) and then for more general weight in machine learning, collaborative filtering, 2-D filter design, and computer vision~\cite{srebro,srebromaxmatrix,Buchanan,manton,lupeiwang,shpak}.
For example, if SVD is used in quadrantally-symmetric two-dimensional~(2-D) filter design, as explained in~\cite{manton} (see also \cite{lupeiwang,shpak}), it might lead to a degraded construction in some cases as it is not able to discriminate between the important and unimportant components of $X$. To address this problem, a weighted least squares matrix decomposition method was first proposed by Shpak~\cite{shpak}. Following his idea of assigning different weights to discriminate between important and unimportant components of the test matrix, Lu, Pei, and Wang~(\cite{lupeiwang}) designed a numerical procedure to solve (\ref{hadamard problem}) for general weight $(W_1\;~W_2)$.

\begin{remark}\label{remark2}
{\rm There is another formulation of weighted low rank approximation problem defined as in~\cite{manton}:
\begin{align}
\label{manton}
\min_{\substack{{X}\in\mathbb{R}^{m\times n}}}\|A-X\|_{Q}^2,~~{\rm subject~to~}{\rm r}({X})\le r,
\end{align}
where $Q \in \mathbb{R}^{mn\times mn}$ is a symmetric positive definite weight matrix.~Denote $\|A-X\|_Q^2:={\rm vec}(A-X)^TQ{\rm vec}(A-X)$, where ${\rm vec}(\cdot)$ is an operator which maps the entries of $\mathbb{R}^{m\times n}$ to $\mathbb{R}^{mn\times 1}$. It is easy to see that (\ref{hadamard problem}) is a special case of~(\ref{manton}) with a diagonal $Q$. In this paper, we will not use this more general formulation for simplicity.
}
\end{remark}

Motivated by the limit behavior of (\ref{unconstraint closeness}) as $\lambda\to\infty$, we are interested in finding out the limit behavior of the solutions to problem~(\ref{hadamard problem}) for general weight $(W_1\;~W_2)$  when $(W_1)_{ij}\to\infty$ and $W_2\to \mathbbm{1}$. One can expect that, with appropriate conditions, the limit should be the solutions to (\ref{golub's problem}). We will verify this with an stronger result with estimation on the rate of convergence~(when $W_2=\mathbbm{1}$). The main challenge here is the lack of closed form solutions to problem (\ref{hadamard problem}) in general \cite{srebro,manton}.~We will also extend the convergence result to the unconstrained version of the problem~(\ref{hadamard problem}). 

In order to make use of the proposed weighted low rank approximation in applications, we will propose a numerical algorithm to solve~(\ref{hadamard problem}) for the special case when $W_2=\mathbbm{1}$. In view of the existing algorithms for solving the general weighted low rank approximation problem, we want to emphasize that our special algorithm takes advantage of the block structure of our weights that results in better numerical performance as well as the fact that we have detailed convergence analysis for the algorithm. 

The rest of the paper is organized as follows.~In Section 2, we state our main results on the behavior of the solutions to~(\ref{hadamard problem}) as $(W_1)_{i,j}\to\infty$ and $(W_2)_{i,j}\to 1$. Their proofs are given in Section 3.~In Section 4, we present a numerical solution to problem~(\ref{hadamard problem}) when $W_2=\mathbbm{1}$ and discuss the convergence of our algorithm.~Numerical results verifying our main results are given in Section 5.

The extension of our rate of convergence and numerical algorithm to the more general case when $W_2=\mathbbm{1}$ is replaced by $W_2\to \mathbbm{1}$ is non-trivial and remains open.

\section{Limiting behavior of solutions as $(W_1)_{i,j}\to\infty$ and $(W_2)_{i,j}\to 1$}
Denote $\mathcal{A}=P_{A_1}^\perp(A_2)$ and $\tilde{\mathcal{A}}=P_{\tilde{X}_1(W)}^\perp(A_2).$ Also denote $s={\rm r}(\mathcal{A})$ and let the ordered non-zero singular values of $\mathcal{A}$ be  $\sigma_1\ge\sigma_2\ge\cdots\ge\sigma_{s}>0.$
Let $(\tilde{X}_1(W)\;~\tilde{X}_2(W))$ be a solution to~(\ref{hadamard problem}). 
Let $\lambda_j = \displaystyle{\min_{1\le i\le m}(W_1)_{ij}}$ and $\lambda = \displaystyle{\min_{1\le j\le k}\lambda_j}$.

\begin{theorem}\label{theorem 3} Suppose that $\sigma_{r-k}>\sigma_{r-k+1}$. Then
we have, as $\lambda\to\infty$ and $W_2=\mathbbm{1}$, 
 	$$
(\tilde{X}_1(W)\;\;\tilde{X}_2(W))=A_G+\displaystyle{O(\frac{1}{\lambda})},
	$$
	where $A_G=(A_1\;\tilde{A}_2)$ is defined to be the unique solution to~(\ref{golub's problem}).
\end{theorem}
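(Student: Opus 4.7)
The plan is to combine the optimality of $(\tilde{X}_1(W),\tilde{X}_2(W))$ for (\ref{hadamard problem}) with the closed-form description of the Golub--Hoffman--Stewart minimizer from Theorem~\ref{theorem 1}. The point $A_G=(A_1,\tilde{A}_2)$ will serve simultaneously as a trial point to control $\tilde{X}_1(W)$ and as a structural template against which to perturb $\tilde{X}_2(W)$.

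First I would substitute $(A_1,\tilde{A}_2)$ into the optimality inequality for (\ref{hadamard problem}). With $W_2=\mathbbm{1}$ and every entry of $W_1$ at least $\lambda$, this yields
\begin{equation*}
\lambda^2\,\|A_1-\tilde{X}_1(W)\|_F^2+\|A_2-\tilde{X}_2(W)\|_F^2 \le \|A_2-\tilde{A}_2\|_F^2=\sum_{i=r-k+1}^{s}\sigma_i^2,
\end{equation*}
from which $\|A_1-\tilde{X}_1(W)\|_F=O(1/\lambda)$ is immediate, and the second term is bounded uniformly in $\lambda$.

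Second, I would observe that, with $\tilde{X}_1(W)$ held fixed, the block $\tilde{X}_2(W)$ is itself a minimizer of $\|A_2-X_2\|_F^2$ subject to ${\rm r}(\tilde{X}_1(W),X_2)\le r$, which is precisely the constrained problem solved by Theorem~\ref{theorem 1} applied to the block matrix $(\tilde{X}_1(W)\;\;A_2)$. Hence, writing $k'={\rm r}(\tilde{X}_1(W))$,
\begin{equation*}
\tilde{X}_2(W)=P_{\tilde{X}_1(W)}(A_2)+H_{r-k'}\bigl(P^{\perp}_{\tilde{X}_1(W)}(A_2)\bigr),
\end{equation*}
a structural replica of (\ref{ghs}) with $A_1$ replaced by $\tilde{X}_1(W)$. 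Subtracting the two expressions, a standard projector-perturbation bound gives $\|P_{\tilde{X}_1(W)}-P_{A_1}\|_F=O(\|\tilde{X}_1(W)-A_1\|_F)$ once the ranks of the two matrices coincide, and a Wedin-type bound ensures that $H_{r-k}$ is Lipschitz in a neighborhood of $P^{\perp}_{A_1}(A_2)$ under the spectral gap assumption $\sigma_{r-k}>\sigma_{r-k+1}$. Chaining the two estimates then yields $\|\tilde{X}_2(W)-\tilde{A}_2\|_F=O(1/\lambda)$, which combined with the first step gives the claim.

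The hard step, and the main obstacle I anticipate, is establishing $k'={\rm r}(\tilde{X}_1(W))=k$ for $\lambda$ sufficiently large. Lower semicontinuity of rank only yields $k'\ge k$ in the limit, and if $\tilde{X}_1(W)$ carried even a single direction orthogonal to ${\rm col}(A_1)$ then $\|P_{\tilde{X}_1(W)}-P_{A_1}\|$ would not shrink and the Wedin step would break, because $P^{\perp}_{\tilde{X}_1(W)}(A_2)$ would differ from $P^{\perp}_{A_1}(A_2)$ by an $O(1)$ term rather than $O(1/\lambda)$. I would tackle this directly from optimality: any component of $\tilde{X}_1(W)$ in ${\rm col}(A_1)^{\perp}$ is penalized by weight of order $\lambda^2$, while the potential gain it could yield for the $A_2$ block is at most $O(1)$. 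Hence for $\lambda$ large the optimizer must confine the $r-k$ excess directions of the joint column space entirely to $\tilde{X}_2(W)$, forcing $k'=k$ and validating both the projector and the singular-value perturbation bounds used above.
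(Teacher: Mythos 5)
Your route is essentially the paper's: bound $\|A_1-\tilde{X}_1(W)\|_F=O(1/\lambda)$ from the optimality inequality (the paper uses the trial point $(A_1\;\;0)$, you use $(A_1\;\;\tilde{A}_2)$, which is equally valid and feasible since ${\rm r}(A_1\;\;\tilde{A}_2)\le r$), then observe that $\tilde{X}_2(W)$ is the Golub--Hoffman--Stewart solution for the pair $(\tilde{X}_1(W)\;\;A_2)$ via Theorem~\ref{theorem 1}, and finally transfer that closed form back to $(A_1\;\;A_2)$ using a projector-perturbation bound plus a perturbation bound for $H_{r-k}$ under the gap $\sigma_{r-k}>\sigma_{r-k+1}$; these are exactly the paper's Lemmas~\ref{projections}, \ref{lemma 1} and \ref{lemma 2} (Wedin's $\sin\Theta$ theorem and Stewart's singular value expansion).

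The only place your reasoning goes off track is the step you single out as the main obstacle. Since $A_1\in\mathbb{R}^{m\times k}$ with $k={\rm r}(A_1)$ (the standing full-column-rank assumption, invoked in the paper's proof of Lemma~\ref{lemma 1}(ii)), the block $\tilde{X}_1(W)$ has only $k$ columns, so ${\rm r}(\tilde{X}_1(W))\le k$ automatically; lower semicontinuity of rank together with $\tilde{X}_1(W)\to A_1$ then forces ${\rm r}(\tilde{X}_1(W))=k$ for all large $\lambda$, with $\sigma_{\min}(\tilde{X}_1(W))\ge\sigma_{\min}(A_1)/2$ eventually. Moreover, your claim that even a single component of $\tilde{X}_1(W)$ orthogonal to the column space of $A_1$ would keep $\|P_{\tilde{X}_1(W)}-P_{A_1}\|$ from shrinking is not correct: such components are generically present, but they have size $O(1/\lambda)$, and Wedin's bound in Lemma~\ref{projections} gives $\|P_{\tilde{X}_1(W)}-P_{A_1}\|_F\le 2\|\tilde{X}_1(W)-A_1\|_F/\sigma_{\min}(\tilde{X}_1(W))=O(1/\lambda)$ precisely because the smallest singular value stays bounded away from zero, irrespective of the direction of the perturbation. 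So no separate cost--benefit argument about confining ``excess directions'' to $\tilde{X}_2(W)$ is needed; the rank and projector control come for free from $\tilde{X}_1(W)=A_1+O(1/\lambda)$, and the rest of your argument then coincides with the paper's proof.
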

\begin{remark}
{\rm (i) The assertion of the uniqueness of $A_G$ is due to the assumption $\sigma_{r-k}>\sigma_{r-k+1}$ (see the Remark~\ref{remark1}). (ii) The convergence $(\tilde{X}_1(W)\;\;\tilde{X}_2(W))\to A_G$ alone is indeed implied by a general result in \cite{markovosky3}.}
\end{remark}

Next, if we do not know $r$ but still want to reduce the rank in our approximation, then we can consider the unconstrained version of~(\ref{hadamard problem}):~for~$\tau>0$,
\begin{eqnarray}\label{unconstrained hadamard problem}
\min_{\substack{X_1,X_2}}\left\{\|\left((A_1\;\;A_2)-({X}_1\;\;{X}_2)\right)\odot(W_1\;\;W_2)\|_F^2+\tau{\rm r}({X}_1\;\;{X}_2)\right\}.
\end{eqnarray}
Again, one can expect that the solutions to~(\ref{unconstrained hadamard problem}) will converge to the solution of (\ref{golub's problem}) as $(W_1)_{ij}\to\infty$ and $(W_2)_{ij}\to 1$. We will first establish a convergence result for~(\ref{unconstrained hadamard problem}) without assuming the uniqueness of the solutions to (\ref{golub's problem}).

Let ${\cal A}_G^r$ be the set of all solutions to~(\ref{golub's problem}) and let $(\hat{X}_1(W)\;~\hat{X}_2(W))$ be a solution to~(\ref{unconstrained hadamard problem}).

\begin{theorem}\label{theorem 5}
Every accumulation point of $(\hat{X}_1(W)\;~\hat{X}_2(W))$ as $(W_1)_{ij}\to\infty,(W_2)_{ij}\to 1$ belongs to $\displaystyle{\mathop{\cup}_{0\le r\le \min\{m,n\}}{\cal A}_G^r}.$
\end{theorem}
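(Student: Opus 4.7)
The plan is to exploit the integer-valued nature of the rank penalty and the coercive role of $W_1$ to reduce, via a subsequence argument, to the fixed-rank Golub-Hoffman-Stewart problem. Fix any sequence of weights $W^{(n)}=(W_1^{(n)}\;W_2^{(n)})$ with $(W_1^{(n)})_{ij}\to\infty$ and $(W_2^{(n)})_{ij}\to 1$, and let $(\hat{X}_1(W^{(n)})\;\hat{X}_2(W^{(n)}))$ be corresponding minimizers of~(\ref{unconstrained hadamard problem}) that converge to the accumulation point $(X_1^*\;X_2^*)$. Comparing the optimal objective with the trivial feasible point $(A_1\;A_2)$, whose value equals $\tau\,\mathrm{r}(A)$, yields the uniform a priori bound
$$\|((A_1\;A_2)-(\hat{X}_1(W^{(n)})\;\hat{X}_2(W^{(n)})))\odot(W_1^{(n)}\;W_2^{(n)})\|_F^2+\tau\,\mathrm{r}(\hat{X}_1(W^{(n)})\;\hat{X}_2(W^{(n)}))\le\tau\,\mathrm{r}(A).$$
Two consequences follow: the ranks are uniformly bounded by $\mathrm{r}(A)\le\min\{m,n\}$, and the entrywise estimate $|(A_1-\hat{X}_1(W^{(n)}))_{ij}|\le\sqrt{\tau\,\mathrm{r}(A)}/(W_1^{(n)})_{ij}\to 0$ forces $X_1^*=A_1$.

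Since the ranks are bounded integers, I would extract a further subsequence (not relabeled) on which $\mathrm{r}(\hat{X}_1(W^{(n)})\;\hat{X}_2(W^{(n)}))=r^*$ is a fixed constant. The set $\{X:\mathrm{r}(X)\le r^*\}$ is closed, so $\mathrm{r}(A_1\;X_2^*)\le r^*$; in particular $r^*\ge k=\mathrm{r}(A_1)$, which makes~(\ref{golub's problem}) at rank $r^*$ feasible. For any competitor $X_2'$ with $\mathrm{r}(A_1\;X_2')\le r^*$, the optimality of the iterates gives
\begin{align*}
\|(A_2-\hat{X}_2(W^{(n)}))\odot W_2^{(n)}\|_F^2+\tau r^*
&\le\|((A_1\;A_2)-(\hat{X}_1(W^{(n)})\;\hat{X}_2(W^{(n)})))\odot(W_1^{(n)}\;W_2^{(n)})\|_F^2+\tau r^*\\
&\le\|(A_2-X_2')\odot W_2^{(n)}\|_F^2+\tau\,\mathrm{r}(A_1\;X_2').
\end{align*}
Passing to the limit using $\hat{X}_2(W^{(n)})\to X_2^*$ and $W_2^{(n)}\to\mathbbm{1}$ entrywise yields $\|A_2-X_2^*\|_F^2\le\|A_2-X_2'\|_F^2+\tau(\mathrm{r}(A_1\;X_2')-r^*)\le\|A_2-X_2'\|_F^2$. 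Hence $(A_1\;X_2^*)$ minimizes $\|A_2-X_2\|_F^2$ subject to $\mathrm{r}(A_1\;X_2)\le r^*$, that is, $(X_1^*\;X_2^*)\in\mathcal{A}_G^{r^*}\subset\bigcup_{0\le r\le\min\{m,n\}}\mathcal{A}_G^r$.

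The main obstacle will be that the rank function is discontinuous, which prevents a direct passage to the limit in the penalty $\tau\,\mathrm{r}(X_1\;X_2)$. The crucial device is that this penalty is integer-valued, so along any convergent subsequence a further extraction makes the rank constant; closedness of the bounded-rank set then transports the constraint to the limit, and continuity of the Frobenius norm in the unweighted limit closes the comparison argument.
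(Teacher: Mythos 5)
Your proposal is correct and follows essentially the same route as the paper: compare the minimizer against a trivial feasible point to force $\hat{X}_1(W)\to A_1$, then pass to the limit in the optimality inequality with the competitor $(A_1\;X_2')$ and recognize the limit as a Golub--Hoffman--Stewart solution at some rank level. Your only deviations are cosmetic ones that, if anything, tighten the argument: you compare with $(A_1\;A_2)$ rather than $(A_1\;0)$, and your extraction of a constant-rank subsequence plus closedness of $\{X:\mathrm{r}(X)\le r^*\}$ makes explicit the lower-semicontinuity step that the paper glosses over when it ``takes the limit'' in the rank penalty.
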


Next, we have more precise information of the convergence if we assume the uniqueness of (\ref{golub's problem}).
\begin{theorem}\label{theorem 6}
Assume that $\sigma_1>\sigma_2>\cdots>\sigma_s>0$. Denote $\sigma_0:=\infty$ and $\sigma_{s+1}:=0.$
Then the accumulation point of the sequence $(\hat{X}_1(W)\;\hat{X}_2(W)),$ as $(W_1)_{ij}\to\infty$ and $(W_2)_{ij}\to1$ is unique; and this unique accumulation point is given by
$$
\left(A_1\;\;\;\;P_{A_1}(A_2)+H_{r^*}\left(P^{\perp}_{A_1}(A_2)\right)\right)
$$
with $r^*$ satisfying
	$$
	\sigma_{r^*+1}^2\le\tau<\sigma_{r^*}^2.
	$$
\end{theorem}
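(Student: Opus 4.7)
The plan is to reduce Theorem~\ref{theorem 6} to a concrete limit optimization that we can solve explicitly. By Theorem~\ref{theorem 5}, any accumulation point $(\hat{X}_1^\infty\;\hat{X}_2^\infty)$ of $(\hat{X}_1(W)\;\hat{X}_2(W))$ already has the form $(A_1,\;P_{A_1}(A_2)+H_{r-k}(P^\perp_{A_1}(A_2)))$ for some $r\ge k$. What is left is to identify which rank level the penalty $\tau\cdot{\rm r}(X_1\;X_2)$ picks out in the limit and to rule out any other accumulation point.

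First I would formulate and solve the limit problem. Since $(W_1)_{ij}\to\infty$ forces $\hat{X}_1^\infty = A_1$, the natural limiting objective is $\|A_2 - X_2\|_F^2 + \tau\cdot{\rm r}(A_1\;X_2)$. Using the decomposition $X_2 = P_{A_1}(X_2) + P^\perp_{A_1}(X_2)$ together with the identity ${\rm r}(A_1\;X_2) = k + {\rm r}(P^\perp_{A_1}(X_2))$, the objective splits as
\begin{equation*}
\|P_{A_1}(A_2) - P_{A_1}(X_2)\|_F^2 + \|\mathcal{A} - P^\perp_{A_1}(X_2)\|_F^2 + \tau(k + r^*),
\end{equation*}
where $r^* := {\rm r}(P^\perp_{A_1}(X_2))$. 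For fixed $r^*$ the first term is driven to zero by $P_{A_1}(X_2) = P_{A_1}(A_2)$, and Eckart--Young makes the second term equal to $\sum_{i=r^*+1}^{s}\sigma_i^2$ (attained at $H_{r^*}(\mathcal{A})$). Examining the consecutive differences of the resulting function $f(r^*) = \sum_{i=r^*+1}^{s}\sigma_i^2 + \tau(k+r^*)$, namely $f(r^*+1) - f(r^*) = \tau - \sigma_{r^*+1}^2$, yields exactly the optimality condition $\sigma_{r^*+1}^2 \le \tau < \sigma_{r^*}^2$.

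Next I would transfer this optimality to the accumulation point by a variational comparison. Let $J_W$ denote the objective in~(\ref{unconstrained hadamard problem}). Taking as competitor $\bar{X}_2 := P_{A_1}(A_2) + H_{r^*}(\mathcal{A})$, the minimality of $(\hat{X}_1(W)\;\hat{X}_2(W))$ gives $J_W(\hat{X}_1(W)\;\hat{X}_2(W)) \le J_W(A_1\;\bar{X}_2)$, and the right-hand side tends to $\|A_2 - \bar{X}_2\|_F^2 + \tau(k+r^*)$ as $W_2\to\mathbbm{1}$ (since $\bar{X}_2$ does not depend on $W$). Along a subsequence converging to $(A_1,\;\hat{X}_2^\infty)$, the nonnegative weighted Frobenius term on the left satisfies $\lim\|(A_2-\hat{X}_2(W))\odot W_2\|_F^2 = \|A_2-\hat{X}_2^\infty\|_F^2$ and the rank functional is lower semi-continuous, so $\liminf J_W \ge \|A_2-\hat{X}_2^\infty\|_F^2 + \tau\cdot{\rm r}(A_1\;\hat{X}_2^\infty)$. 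Thus $\hat{X}_2^\infty$ attains the minimum of the limit problem and equals $P_{A_1}(A_2) + H_{r^*}(\mathcal{A})$; the strict ordering $\sigma_1>\cdots>\sigma_s$ together with Remark~\ref{remark1} guarantees that $H_{r^*}(\mathcal{A})$ is a singleton, which yields the asserted uniqueness of the accumulation point.

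The main obstacle is the lack of continuity of the rank functional across the weight limit; I would circumvent this by pairing its lower semi-continuity with the explicit $W$-independent competitor $(A_1\;\bar{X}_2)$ to obtain matching upper and lower bounds on the limiting objective. A secondary subtlety is the boundary case $\tau = \sigma_{r^*+1}^2$, where two adjacent values of $r^*$ tie in the limit problem; under the strict separation hypothesis this would require a more delicate inspection of the finite-$W$ minimizers to confirm which choice survives the limit.
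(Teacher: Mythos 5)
Your route is genuinely different from the paper's, and away from the boundary value of $\tau$ it works. The paper follows Golub--Hoffman--Stewart: it passes to QR coordinates $Q^T\hat{X}$, uses the invertibility of $\hat{R}_{11}$ for large $\lambda$ and row/column operations to reduce the rank term, takes the limit of the finite-$W$ optimality inequality along a subsequence, shows $\hat{R}^*_{12}=R_{12}$ and that the remaining block solves a classical low rank approximation of $\bigl(\begin{smallmatrix}R_{22}\\0\end{smallmatrix}\bigr)$, and finally pins down the rank by testing the best rank-$(r^*+1)$ and rank-$(r^*-1)$ approximants in~(\ref{weighted unconstrained problem 11}), obtaining $\sigma_{q}^2>\tau\ge\sigma_{q+1}^2$ for the rank $q$ of \emph{any} accumulation point, which forces $q$ to be independent of the subsequence. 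You instead solve the limiting problem $\min_{X_2}\{\|A_2-X_2\|_F^2+\tau\,{\rm r}(A_1\;X_2)\}$ in closed form (via ${\rm r}(A_1\;X_2)=k+{\rm r}(P^{\perp}_{A_1}X_2)$ and Eckart--Young) and transfer optimality to accumulation points by a sandwich: the $W$-independent competitor $(A_1\;\bar{X}_2)$ for the upper bound, and lower semicontinuity of rank plus convergence of the weighted Frobenius term for the lower bound. This is cleaner in one important respect: your competitor has rank exactly $k+r^*$, so you avoid the QR/Schur-complement bookkeeping that the paper needs because its finite-$W$ competitor is $(\hat{X}_1\;X_2)$ rather than $(A_1\;X_2)$. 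For $\sigma_{r^*+1}^2<\tau<\sigma_{r^*}^2$ your argument, combined with the boundedness from Theorem~\ref{theorem 5} and the uniqueness of $H_{r^*}(\mathcal{A})$, does give the stated conclusion.

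The genuine gap is the boundary case $\tau=\sigma_{r^*+1}^2$, which the theorem explicitly includes ($\sigma_{r^*+1}^2\le\tau$) and which you only flag. When $\tau=\sigma_{r^*+1}^2$, your limiting value function $f(q)=\sum_{i>q}\sigma_i^2+\tau(k+q)$ satisfies $f(r^*)=f(r^*+1)$, so the limit problem has two distinct minimizers, $P_{A_1}(A_2)+H_{r^*}(\mathcal{A})$ and $P_{A_1}(A_2)+H_{r^*+1}(\mathcal{A})$ (they differ because $\sigma_{r^*+1}>0$). Your variational argument only shows that every accumulation point is \emph{some} minimizer of the limit problem, so in this case it yields neither the uniqueness of the accumulation point nor the selection of the rank-$r^*$ solution asserted in the statement; epi-convergence of the objectives cannot by itself decide which minimizer the finite-$W$ minimizers approach. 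This is precisely where the paper's proof does its extra work: from~(\ref{weighted unconstrained problem 11}) it compares an accumulation point of rank $q$ against the best rank-$(q-1)$ approximant and argues, using~(\ref{R22lowrank}) and the strictness in Eckart--Young--Mirsky, that equality cannot hold, giving the strict inequality $\tau<\sigma_{q}^2$; applied to a putative accumulation point with $q=r^*+1$ this excludes it when $\tau=\sigma_{r^*+1}^2$. To complete your proof you would need to add an argument of this type (a finite-$W$ or limit-inequality comparison that rules out the higher-rank minimizer at the tie), rather than leave it as a deferred subtlety.
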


\begin{remark}
{\rm For the case when $P^{\perp}_{A_1}(A_2)$ has repeated singular values, we leave it to the reader to verify the following more general statement by using a similar argument: Let $\hat{\sigma}_1>\hat{\sigma}_2>...>\hat{\sigma}_t>0$ be the singular values of $P^{\perp}_{A_1}(A_2)$ with multiplicity $k_1,k_2,\cdots k_t$ respectively. Note that $\sum_{i=1}^tk_i=s.$
Let $\sigma_{p^*+1}^2\le \tau <\sigma_{p^*}^2,$ where $\sigma_{p^*}$ has multiplicity $k_{p^*}.$
Then the accumulation points of the set $(\hat{X}_1(W)\;~\hat{X}_2(W)),$ as $(W_1)_{ij}\to\infty, (W_2)_{ij}\to1$, belongs to the set $\displaystyle{\mathop{\cup}_{r^*}{\cal A}_G^{r^*}},$ where $1+\sum_{i=1}^{p^*-1}k_i\le r^{*}<\sum_{i=1}^{p^*}k_i.$}
\end{remark}

\section{Proofs of results in Section 2}
To prove Theorem~\ref{theorem 3}, we start with a few well known results from the perturbation theory of singular values.~First, we quote the following result of Stewart.

\begin{lemma}\label{lemma 3}\cite{stewart}
	Let $\tilde{{A}}={{A}}+E$ and $\sigma\neq 0$ be a non-repeating singular value of the matrix ${A}$ with $u$ and $v$ being left and right singular vectors respectively. Then as $\|E\| \to 0,$ there is a unique singular value $\tilde{\sigma}$ of $\tilde{{A}}$ such that
	\begin{eqnarray}\label{per-singularvalue}
	\tilde{\sigma}=\sigma+u^TEv+O(\|E\|^2).
	\end{eqnarray}
\end{lemma}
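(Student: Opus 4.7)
The plan is to reduce this singular-value perturbation statement to the classical first-order perturbation result for a simple eigenvalue of a symmetric matrix. First I would pass to the Jordan--Wielandt symmetric dilation
\[
H=\begin{pmatrix} 0 & A \\ A^T & 0 \end{pmatrix},
\]
whose nonzero eigenvalues are exactly $\pm\sigma_i$, the singular values of $A$ with their signs, and whose unit eigenvector associated to the eigenvalue $+\sigma$ is $w=\tfrac{1}{\sqrt{2}}\binom{u}{v}$. Because $\sigma\neq 0$ is a non-repeating singular value of $A$, it is an isolated simple eigenvalue of $H$: it differs from $-\sigma$ (since $\sigma\ne 0$), and it is separated from all other eigenvalues by the singular-value gap.

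Next I would write the perturbed matrix as $\tilde H=H+F$ with
\[
F=\begin{pmatrix} 0 & E \\ E^T & 0 \end{pmatrix},
\]
noting that $\|F\|=\|E\|$. Standard analytic perturbation theory for simple eigenvalues of symmetric matrices (Rellich's theorem, or a direct implicit-function-theorem argument applied to the characteristic equation) then yields an analytic branch $\tilde\sigma=\tilde\sigma(F)$ of eigenvalues of $\tilde H$ with $\tilde\sigma(0)=\sigma$ and the expansion $\tilde\sigma=\sigma+w^TFw+O(\|F\|^2)$. A short calculation gives
\[
w^TFw=\tfrac{1}{2}\bigl(u^T\ v^T\bigr)\begin{pmatrix} 0 & E \\ E^T & 0 \end{pmatrix}\begin{pmatrix} u \\ v \end{pmatrix}=\tfrac{1}{2}\bigl(u^TEv+v^TE^Tu\bigr)=u^TEv,
\]
where I have used that each summand is a scalar and therefore equal to its own transpose.

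Finally I would translate the eigenvalue statement back to a singular-value statement: since $\tilde\sigma(F)\to\sigma>0$ as $\|E\|\to 0$, the perturbed quantity $\tilde\sigma$ is positive for $\|E\|$ small enough and hence is genuinely a singular value of $\tilde A$. The uniqueness of such a $\tilde\sigma$ follows from the simplicity and isolation of $\sigma$ in the spectrum of $H$ together with Weyl's inequality, which guarantees that for sufficiently small $\|E\|$ no other eigenvalue of $\tilde H$ can cross into a neighborhood of $\sigma$.

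The main obstacle I would anticipate is making the uniqueness and the implicit $O(\|E\|^2)$ constant rigorous: one must verify that the selected branch really tracks a positive eigenvalue of $\tilde H$ (and not $-\sigma$), and that the perturbation constant depends on the separation of $\sigma$ from the remaining singular values of $A$. Both points follow cleanly from the Lipschitz continuity of eigenvalues of symmetric matrices via Weyl's inequality, but the exposition should acknowledge the dependence of the error term on the singular-value gap. Once these are in place, the lemma follows as a direct corollary of the symmetric-eigenvalue perturbation formula applied to $H$.
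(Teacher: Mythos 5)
Your argument is correct, and it is worth noting that the paper itself does not prove this lemma at all: it is quoted verbatim from Stewart's 1984 paper on perturbation expansions for small singular values, where the expansion is obtained by working directly with perturbation expansions of the SVD factors. Your route through the Jordan--Wielandt dilation $H=\bigl(\begin{smallmatrix}0 & A\\ A^T & 0\end{smallmatrix}\bigr)$ is a genuinely different and arguably cleaner path: since $\sigma\neq 0$ is non-repeating, $+\sigma$ is a simple isolated eigenvalue of $H$ with unit eigenvector $w=\tfrac{1}{\sqrt 2}\binom{u}{v}$, the classical simple-eigenvalue expansion for symmetric matrices gives $\tilde\sigma=\sigma+w^TFw+O(\|F\|^2)$, and your computation $w^TFw=u^TEv$ is right; positivity of the perturbed branch for small $\|E\|$ identifies it as a singular value of $\tilde A$, and Weyl's inequality gives the claimed uniqueness of the singular value in a neighborhood of $\sigma$. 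What your approach buys is a self-contained proof relying only on standard symmetric eigenvalue perturbation theory; what it costs, as you yourself note, is that the constant hidden in $O(\|E\|^2)$ depends on the gap between $\sigma$ and the rest of the spectrum of $H$ (equivalently, on its separation from the other singular values and, through the eigenvalue $-\sigma$, from $-\sigma$ itself, i.e.\ on $\sigma>0$), a dependence that is also implicit in Stewart's statement. Two minor points of care: the identity $\|F\|=\|E\|$ holds for the spectral norm (for the Frobenius norm a factor $\sqrt 2$ appears, which is harmless for the $O(\cdot)$ claim), and since the lemma is used in the paper only through the qualitative bound (\ref{svd bound}), your version is fully adequate for that purpose.
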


Let the SVDs of ${B},~\tilde{{B}}\in \mathbb{R}^{m\times n}$ be given by
	\begin{eqnarray}\label{SVDA}
	{B}=U\Sigma V^T=(U_1\;U_2)\begin{pmatrix}
	\Sigma_1 & 0\\
	0 &\Sigma_2\\
	\end{pmatrix}\begin{pmatrix}
	V_1^T\\V_2^T
	\end{pmatrix}=:{B}_1+{B}_2,
	\end{eqnarray}
	\begin{eqnarray}\label{SVDA1}
	\tilde{{B}}=\tilde{U}\tilde{\Sigma}\tilde{V}^T=(\tilde{U}_1\;\tilde{U}_2)\begin{pmatrix}
	\tilde{\Sigma}_1 & 0\\
	0 &\tilde{\Sigma}_2\\
	\end{pmatrix}\begin{pmatrix}
	\tilde{V}_1^T\\\tilde{V}_2^T
	\end{pmatrix}=:\tilde{{B}}_1+\tilde{{B}}_2,
	\end{eqnarray}
	such that $U,~\tilde{U}\in\mathbb{R}^{m\times m},~V,~\tilde{V}\in\mathbb{R}^{n\times n},\;\text{and}\;~ \Sigma,~\tilde{\Sigma}\in\mathbb{R}^{m\times n}$ with $\Sigma$ and $\tilde{\Sigma}$ being diagonal matrices containing singular values of ${B}$ and $\tilde{{B}}$, respectively, arranged in a non-increasing order; $U_1,~\tilde{U}_1\in\mathbb{R}^{m\times s},U_2,~\tilde{U}_2\in\mathbb{R}^{m\times (m-s)},
	V_1,~\tilde{V}_1\in\mathbb{R}^{n\times s},\;\text{and}\;~
	V_2,~\tilde{V}_2\in\mathbb{R}^{n\times (n-s)}.$
	Using~(\ref{SVDA}) and~(\ref{SVDA1}) we have, with $E=\tilde{B}-B$,
	\begin{eqnarray}\label{per_SVD2}
	\tilde{{B}}_1+\tilde{{B}}_2=\tilde{{B}}={B}+E={B}_1+{B}_2+E.
	\end{eqnarray}

Next, we state a special form of the sin$\theta$ Theorem of Wedin (see \cite[p. 260]{stewart-sun} and \cite{wedin}) as follows.
\begin{lemma}\label{Wedin theorem}
With the notations above, let $\eta=\sigma_{min}(\tilde{B}_1)-\sigma_{max}({B}_2)>0$.
Then
$$
\sqrt{\|\sin \Theta(U_1,\tilde{U}_1)\|^2_F+\|\sin \Theta(V_1,\tilde{V}_1)\|^2_F}\leq\frac{\sqrt{2}\|E\|_F}{\eta}.
$$
\end{lemma}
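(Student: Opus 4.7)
The plan is to reduce the canonical-angle bound to two decoupled Sylvester equations whose ``separation'' is exactly the assumed gap $\eta$, and then combine the pieces via a parallelogram identity. This is the standard route to Wedin's theorem.

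First I would derive two coupled matrix equations. Left-multiplying the identity $\tilde U_1 \tilde \Sigma_1 = \tilde B \tilde V_1$ by $U_2^T$, substituting $\tilde B = B + E$, and using $U_2^T U_1 = 0$ to kill the top block of $B$, one gets
$$P \tilde \Sigma_1 - \Sigma_2 Q = U_2^T E \tilde V_1, \qquad P := U_2^T \tilde U_1,\ \ Q := V_2^T \tilde V_1.$$
Repeating on the companion identity $\tilde V_1 \tilde \Sigma_1 = \tilde B^T \tilde U_1$, left-multiplied by $V_2^T$, yields
$$Q \tilde \Sigma_1 - \Sigma_2^T P = V_2^T E^T \tilde U_1.$$

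Second, I would decouple the pair by summing and differencing. After padding $\Sigma_2$ to a square diagonal with zeros (equivalently, passing to the Hermitian Jordan--Wielandt dilation), set $M := P + Q$ and $N := P - Q$; this produces the uncoupled Sylvester equations
$M \tilde \Sigma_1 - \Sigma_2 M = R_+$ and $N \tilde \Sigma_1 + \Sigma_2 N = R_-$, where $R_\pm := U_2^T E \tilde V_1 \pm V_2^T E^T \tilde U_1$. Since $\tilde \Sigma_1$ and $\Sigma_2$ are diagonal, both equations decouple entrywise into scalar equations of the form $(\tilde\sigma_i \mp \sigma_j)\, x_{ij} = (R_\pm)_{ij}$; by the gap hypothesis the divisor is at least $\sigma_{\min}(\tilde B_1) - \sigma_{\max}(B_2) = \eta$ in both cases (for the $N$ equation because $\tilde\sigma_i,\sigma_j \ge 0$ already forces $\tilde\sigma_i + \sigma_j \ge \sigma_{\min}(\tilde B_1) \ge \eta$). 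Hence $\|M\|_F \le \|R_+\|_F/\eta$ and $\|N\|_F \le \|R_-\|_F/\eta$.

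Third, I would assemble the pieces. The parallelogram identity gives $\|R_+\|_F^2 + \|R_-\|_F^2 = 2(\|U_2^T E \tilde V_1\|_F^2 + \|V_2^T E^T \tilde U_1\|_F^2) \le 4\|E\|_F^2$, and the polarization identity gives $\|P\|_F^2 + \|Q\|_F^2 = \tfrac12(\|M\|_F^2 + \|N\|_F^2) \le 2\|E\|_F^2/\eta^2$. Since $U_2$ is an orthonormal basis of $\mathrm{range}(U_1)^\perp$, the standard characterization of canonical angles gives $\|\sin\Theta(U_1,\tilde U_1)\|_F = \|U_2^T \tilde U_1\|_F = \|P\|_F$, and analogously $\|\sin\Theta(V_1,\tilde V_1)\|_F = \|Q\|_F$, which delivers the stated bound. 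The main obstacle I expect is precisely the decoupling step when $m \ne n$: $P$ and $Q$ then have different shapes and cannot be summed directly. The cleanest fix is to embed $B,\tilde B$ into their Jordan--Wielandt dilations $\bigl(\begin{smallmatrix} 0 & B \\ B^T & 0 \end{smallmatrix}\bigr)$ and $\bigl(\begin{smallmatrix} 0 & \tilde B \\ \tilde B^T & 0 \end{smallmatrix}\bigr)$, apply the Hermitian Davis--Kahan $\sin\Theta$ theorem there, and translate the conclusion back; checking that this translation preserves the Frobenius combination on the left without introducing extra constants on the right is the delicate bookkeeping point, but everything else is routine linear algebra.
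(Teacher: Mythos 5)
The paper does not prove this lemma at all: it is quoted as a known special form of Wedin's $\sin\Theta$ theorem, with references to Wedin's original paper and to Stewart and Sun, so there is no in-paper argument to compare against. Your sketch is essentially the classical proof found in those sources: the two coupled equations $P\tilde\Sigma_1-\Sigma_2Q=U_2^TE\tilde V_1$ and $Q\tilde\Sigma_1-\Sigma_2^TP=V_2^TE^T\tilde U_1$ are derived correctly, the sum/difference decoupling with the gap bound (using $\tilde\sigma_i-\sigma_j\ge\eta$ and $\tilde\sigma_i+\sigma_j\ge\sigma_{\min}(\tilde B_1)\ge\eta$) is the right mechanism, the norm bookkeeping $\|P\|_F^2+\|Q\|_F^2\le 2\|E\|_F^2/\eta^2$ is correct, and the identification $\|\sin\Theta(U_1,\tilde U_1)\|_F=\|U_2^T\tilde U_1\|_F$, $\|\sin\Theta(V_1,\tilde V_1)\|_F=\|V_2^T\tilde V_1\|_F$ is the standard canonical-angle characterization since $(U_1\;U_2)$ and $(V_1\;V_2)$ are complete orthogonal matrices. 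The only point you leave open is the one you flag yourself: when $m\neq n$, $P$ and $Q$ have different shapes. This is handled either by your proposed Jordan--Wielandt dilation or, more simply, by pairing rows entrywise where $\Sigma_2$ has diagonal entries and treating the leftover rows of the larger block separately, where the equation is already decoupled ($q_{jk}\tilde\sigma_k=s_{jk}$, and $\tilde\sigma_k\ge\eta$ gives the same bound); with that bookkeeping done, your argument is complete and matches the cited proof.
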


Finally, using the argument of Wedin~(\cite{wedin}), the following result can be achieved.
\begin{lemma}\label{lemma 4} (\cite[Sect. 4.4]{wedin}).
	Assume there exists an $\alpha\ge 0$ and a $\delta>0$ such that
	$$
	\sigma_{min}(\tilde{{B}}_1)\ge \alpha+\delta\;\;\text{and}\;\;\sigma_{max}({B}_2)\le\alpha,
	$$
	then
	\begin{align}\label{thresholding inequality}
	\|{B}_1-\tilde{{B}}_1\|\le \|E\|(3+\frac{\|{B}_2\|}{\delta}+\frac{\|\tilde{{B}}_2\|}{\delta}).\end{align}
\end{lemma}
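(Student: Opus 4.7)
The plan is to start from the algebraic identity
$B_1 - \tilde{B}_1 = \tilde{B}(P_V - \tilde{P}_V) - E P_V$,
where $P_V := V_1 V_1^T$ and $\tilde{P}_V := \tilde{V}_1 \tilde{V}_1^T$, which follows from $B_1 = B P_V$, $\tilde{B}_1 = \tilde{B}\tilde{P}_V$, and $\tilde{B} - B = E$. Since $\|E P_V\| \le \|E\|$, all the work is in $\tilde{B}(P_V - \tilde{P}_V) = \tilde{B}_1(P_V - \tilde{P}_V) + \tilde{B}_2(P_V - \tilde{P}_V)$; splitting this way isolates the ``large'' and ``small'' spectral blocks.

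The easy piece is the second one: $\tilde{B}_2 \tilde{P}_V = 0$ gives $\tilde{B}_2(P_V - \tilde{P}_V) = \tilde{B}_2 V_1 V_1^T$, whose norm factors through $\tilde{V}_2^T V_1$. The spectral-norm $\sin\Theta$ estimate $\|\tilde{V}_2^T V_1\| \le \|E\|/\delta$, obtained in the same spirit as Lemma~\ref{Wedin theorem} but in the operator norm, then gives $\|\tilde{B}_2(P_V - \tilde{P}_V)\| \le \|\tilde{B}_2\|\,\|E\|/\delta$. The harder piece is $\tilde{B}_1(P_V - \tilde{P}_V) = -\tilde{B}_1 V_2 V_2^T$ (using $\tilde{B}_1 \tilde{P}_V = \tilde{B}_1$), whose norm equals $\|\tilde{\Sigma}_1 N^T\|$ with $N := V_2^T \tilde{V}_1$. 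Here lies the main obstacle: $\|\tilde{\Sigma}_1\|$ can be arbitrarily large, so the naive bound $\|\tilde{\Sigma}_1\|\,\|N\|$ would leak a $\|\tilde{B}_1\|$ into the estimate and destroy the claimed structure.

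Following Wedin, I would resolve this by computing $\tilde{U}_1^T \tilde{B} V_2 = \tilde{U}_1^T(B+E) V_2$ in two ways to obtain the Sylvester-type identity
$$
\tilde{\Sigma}_1 N^T = M^T \Sigma_2 + \tilde{U}_1^T E V_2, \qquad M := U_2^T \tilde{U}_1,
$$
together with its companion $M \tilde{\Sigma}_1 - \Sigma_2 N = U_2^T E \tilde{V}_1$. The companion identity together with the gap $\sigma_{\min}(\tilde{\Sigma}_1) - \sigma_{\max}(\Sigma_2) \ge \delta$ yields, by a short fixed-point argument that feeds the two identities into each other, the spectral-norm bound $\|M\| \le \|E\|/\delta$ (and symmetrically the bound on $\|N\|$ used above). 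Substituting $\|M\| \le \|E\|/\delta$ into the Sylvester identity gives $\|\tilde{B}_1(P_V - \tilde{P}_V)\| = \|\tilde{\Sigma}_1 N^T\| \le \|\Sigma_2\|\,\|M\| + \|E\| \le \|B_2\|\,\|E\|/\delta + \|E\|$.

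Summing the three contributions yields $\|B_1 - \tilde{B}_1\| \le \|E\|\bigl(2 + \|B_2\|/\delta + \|\tilde{B}_2\|/\delta\bigr)$, which already implies the stated inequality. The crux is the $\tilde{B}_1(P_V - \tilde{P}_V)$ term: the Sylvester identity is essential precisely because it trades the large $\|\tilde{\Sigma}_1\|$ for the small quantities $\|\Sigma_2\|$ and $\|E\|$, whereas any direct $\sin\Theta$ estimate on that block would be too weak.
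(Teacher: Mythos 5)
Your proof is correct, and it is essentially the argument the paper relies on: the paper gives no proof of its own for this lemma but cites Wedin's Section 4.4, and your decomposition $B_1-\tilde{B}_1=\tilde{B}(P_V-\tilde{P}_V)-EP_V$ together with the coupled Sylvester identities for $M=U_2^T\tilde{U}_1$, $N=V_2^T\tilde{V}_1$ and the gap $\delta$ is precisely Wedin's route. Your bookkeeping even yields the slightly sharper constant $2+\|B_2\|/\delta+\|\tilde{B}_2\|/\delta$, which of course implies the stated bound with $3$.
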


Now, we establish some auxiliary results. Let $P_B$ and $P_{\tilde{B}}$ be the orthogonal projections onto the column spaces of matrices $B$ and $\tilde{B}$, respectively.
\begin{lemma}\label{projections} Assume that $B$ and $\tilde{B}$ are full rank matrices. Let $\eta$ denote the smallest singular value of $\tilde{B}$. Then
\begin{equation}\label{project estimation}
\|P_B-P_{\tilde{B}}\|_F\leq \frac{2\|B-\tilde{B}\|_F}{\eta}.
\end{equation}
\end{lemma}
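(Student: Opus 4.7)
The plan is to combine three ingredients: the algebraic identity
$$P_B - P_{\tilde B} \;=\; P_B(I - P_{\tilde B}) \;-\; (I - P_B)\,P_{\tilde B},$$
a direct pseudoinverse estimate on one of the two cross-terms, and a rank-based symmetry that reduces the other cross-term to the first. The full-rank hypothesis will enter precisely through that symmetry, since it guarantees $\mathrm{rank}(B) = \mathrm{rank}(\tilde B)$.

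First I would apply the Frobenius triangle inequality to the identity above to obtain
$$\|P_B - P_{\tilde B}\|_F \;\leq\; \|P_B(I - P_{\tilde B})\|_F \;+\; \|(I - P_B)\,P_{\tilde B}\|_F.$$
To handle the second summand, I would use that $P_B B = B$, hence $(I - P_B)B = 0$; setting $E = \tilde B - B$ and writing $P_{\tilde B} = \tilde B\,\tilde B^{\dagger}$ via the Moore--Penrose pseudoinverse, this gives
$$(I - P_B)\,P_{\tilde B} \;=\; (I - P_B)(B + E)\,\tilde B^{\dagger} \;=\; (I - P_B)\,E\,\tilde B^{\dagger}.$$
Since $\|I - P_B\|_2 \leq 1$ and $\|\tilde B^{\dagger}\|_2 = 1/\eta$, this yields $\|(I - P_B)\,P_{\tilde B}\|_F \leq \|E\|_F/\eta = \|B - \tilde B\|_F/\eta$.

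For the remaining term I would establish the symmetry $\|P_B(I - P_{\tilde B})\|_F = \|(I - P_B)\,P_{\tilde B}\|_F$ by way of the two trace identities
$$\|P_B(I - P_{\tilde B})\|_F^2 = \mathrm{tr}(P_B) - \mathrm{tr}(P_B P_{\tilde B}), \qquad \|(I - P_B)\,P_{\tilde B}\|_F^2 = \mathrm{tr}(P_{\tilde B}) - \mathrm{tr}(P_B P_{\tilde B}),$$
whose right-hand sides coincide because $\mathrm{tr}(P_B) = \mathrm{rank}(B) = \mathrm{rank}(\tilde B) = \mathrm{tr}(P_{\tilde B})$. Inserting both bounds into the triangle inequality then produces the factor of $2$ asserted in~(\ref{project estimation}).

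The step I expect to demand the most care is this symmetry reduction. A more naive attempt to bound $\|P_B(I - P_{\tilde B})\|_F$ directly by mimicking the earlier argument with $P_B = B B^{\dagger}$ would produce $\|E\|_F/\sigma_{\min}(B)$, which cannot be re-expressed in terms of $\eta = \sigma_{\min}(\tilde B)$ without extra assumptions. Routing through the trace identities, which depend only on equality of ranks rather than on individual singular values, is what makes the clean one-sided bound possible.
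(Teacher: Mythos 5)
Your proof is correct, but it proceeds along a genuinely different route from the paper. The paper first invokes the identity $\|P_B-P_{\tilde B}\|_F=\sqrt{2}\,\|\sin\Theta(B,\tilde B)\|_F$ from Stewart and Sun, and then bounds $\|\sin\Theta(B,\tilde B)\|_F$ by $\sqrt{2}\,\|B-\tilde B\|_F/\eta$ via the special case of Wedin's $\sin\Theta$ theorem (the paper's Lemma~\ref{Wedin theorem} with $U_2=\tilde U_2=\emptyset$), which yields the same constant $2$. You instead give a self-contained elementary argument: the decomposition $P_B-P_{\tilde B}=P_B(I-P_{\tilde B})-(I-P_B)P_{\tilde B}$, the pseudoinverse estimate $\|(I-P_B)P_{\tilde B}\|_F=\|(I-P_B)E\tilde B^{\dagger}\|_F\le\|E\|_F/\eta$ (valid since $\|\tilde B^{\dagger}\|_2=1/\eta$ for full-rank $\tilde B$), and the trace identities showing the two cross-terms have equal Frobenius norm because $\mathrm{tr}(P_B)=\mathrm{rank}(B)=\mathrm{rank}(\tilde B)=\mathrm{tr}(P_{\tilde B})$; each step checks out. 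What the paper's route buys is brevity in context, since Wedin's machinery is needed anyway for Lemma~\ref{lemma 2} (the authors say as much in their remark); what your route buys is independence from canonical-angle and perturbation theory, and in fact it can be sharpened for free: your trace identities give the exact Pythagorean relation $\|P_B-P_{\tilde B}\|_F^2=\|P_B(I-P_{\tilde B})\|_F^2+\|(I-P_B)P_{\tilde B}\|_F^2$, so replacing the triangle inequality by this identity improves the constant from $2$ to $\sqrt{2}$, which the paper's stated bound does not achieve.
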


\begin{proof}
According to \cite[p. 43]{stewart-sun}, we have
\begin{equation}\label{x1}
\|P_B-P_{\tilde{B}}\|_F=\sqrt{2}\|\sin \Theta (B,\tilde{B})\|_F,
\end{equation}
where $\Theta (B,\tilde{B})$ is the canonical angle (diagonal) matrix between the column spaces of $B$ and $\tilde{B}$. Now, by applying Lemma~\ref{Wedin theorem} to the case when $U_2=\tilde{U}_2=\emptyset$ (so that $U_1$ and $\tilde{U}_1$ span the same column spaces as those of $B$ and $\tilde{B}$, respectively, and so $\Theta(B,\tilde{B})=\Theta(U_1,\tilde{U}_1)$ with $\eta=\sigma_{min}(\tilde{B}_1)>0$ (since $\sigma({B}_2)=0$), we have
\begin{equation}\label{wedin1}
\|\sin \Theta (B, \tilde{B})\|_F=\|\sin \Theta (U_1, \tilde{U}_1)\|_F\leq \frac{\sqrt{2}\|B-\tilde{B}\|_F}{\eta}.
\end{equation}
Now, the inequality (\ref{project estimation}) follows from (\ref{x1}) and (\ref{wedin1}).
\end{proof}

\noindent\begin{lemma}\label{lemma 1}
As $(W_1)_{ij}\to\infty$ and $W_2$ stays bounded, we have the following estimates.
\begin{enumerate}[(i)]
		\item$\tilde{X}_1(W)= A_1+\displaystyle{O(\frac{1}{\lambda})}$.
		\item $P_{\tilde{X}_1(W)}(A_2)=P_{A_1}(A_2)+\displaystyle{O(\frac{1}{\lambda})}$.
		\item $P_{\tilde{X}_1(W)}^\perp(A_2)=P_{A_1}^\perp(A_2)+\displaystyle{O(\frac{1}{\lambda})}$.
\end{enumerate}
\end{lemma}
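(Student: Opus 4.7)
The plan is to first handle (i) by an optimality comparison, then deduce (ii) from (i) via a projection-perturbation estimate, and finally obtain (iii) as an immediate consequence of (ii).

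For (i), I would exploit the optimality of $(\tilde X_1(W),\tilde X_2(W))$ for~(\ref{hadamard problem}). The Golub pair $(A_1,\tilde A_2)$ from Theorem~\ref{theorem 1} is feasible for~(\ref{hadamard problem}) since its rank does not exceed $r$, so comparing objective values gives
\begin{align*}
\|(A_1-\tilde X_1(W))\odot W_1\|_F^2+\|(A_2-\tilde X_2(W))\odot W_2\|_F^2 \le \|(A_2-\tilde A_2)\odot W_2\|_F^2.
\end{align*}
Since $W_2$ stays bounded, the right-hand side is at most a constant $M$ independent of $W_1$. Dropping the second term on the left and using that every entry of $W_1$ is at least $\lambda$, one has $\lambda^2\|A_1-\tilde X_1(W)\|_F^2\le M$, which gives (i).

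For (ii), I would apply the projection perturbation estimate in Lemma~\ref{projections} to $B=A_1$ and $\tilde B=\tilde X_1(W)$. By (i), $\|B-\tilde B\|_F=O(1/\lambda)$, and Weyl's inequality shows that for $\lambda$ sufficiently large the smallest nonzero singular value of $\tilde X_1(W)$ stays bounded below by a positive constant, uniformly in $\lambda$. Combined with Wedin's sin theta theorem (Lemma~\ref{Wedin theorem}) applied to the top $k=\mathrm{r}(A_1)$ singular subspaces, this yields $\|P_{\tilde X_1(W)}-P_{A_1}\|_F=O(1/\lambda)$; multiplying on the right by the bounded matrix $A_2$ then gives (ii). Part (iii) is immediate from the identity $P^\perp=I-P$, so subtracting both sides of (ii) from $A_2$ does the job.

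The main obstacle is the projection estimate for (ii): if $\tilde X_1(W)$ has column-space dimension strictly greater than $\mathrm{r}(A_1)$, the ``extra'' singular directions, each associated with an $O(1/\lambda)$ singular value, are not a priori negligible when acting on $A_2$, so the naive application of Lemma~\ref{projections} (which is stated for full-rank matrices) does not literally apply. Resolving this requires isolating the top $k$ left singular subspace of $\tilde X_1(W)$ using the spectral gap at $\sigma_k(A_1)>0$ via Wedin's theorem, and then bounding the residual contribution from the small-singular-value directions by the column-space perturbation they induce, which is itself $O(1/\lambda)$ by (i). Once this is in place, the rest of the lemma follows mechanically.
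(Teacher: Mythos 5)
Your proposal is correct and follows essentially the same route as the paper: part (i) by comparing the optimal objective with a feasible point (the paper uses $(A_1\;\;0)$, you use the Golub pair; either works and gives the entrywise $O(1/\lambda)$ bound), part (ii) via Lemma~\ref{projections} together with a uniform lower bound on the smallest singular value of $\tilde{X}_1(W)$, and part (iii) from $P^{\perp}=I-P$. The obstacle you flag about extra small singular directions does not actually arise, because $A_1\in\mathbb{R}^{m\times k}$ is assumed to have full column rank $k$, so the nearby $m\times k$ matrix $\tilde{X}_1(W)$ is also of full column rank for all large $\lambda$ and Lemma~\ref{projections} applies literally, exactly as in the paper's proof.
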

	\noindent\begin{proof}{\it (i).}
	Note that,
	\begin{align*}
	&\|(A_1-\tilde{X}_1(W))\odot W_1\|_F^2+\|(A_2-\tilde{X}_2(W))\odot W_2\|_F^2\\
	&=\min_{\substack{{X}_1,{X}_2\\{\rm r}({X}_1\;\;{X}_2)\le r}}\left(\|(A_1-{X}_1)\odot W_1\|_F^2+\|(A_2-{X}_2)\odot W_2\|_F^2\right)\\
	&\le\|A_2\odot W_2\|_F^2\;(\text{by taking}\;(X_1\;\;X_2)=(A_1\;\;0)).
	\end{align*}
	Then $\displaystyle{\sum_{\substack{1\le i\le m\\1\le j\le k}}((A_1)_{ij}-(\tilde{X}_1(W))_{ij})^2(W_1)_{ij}^2\le \|A_2\odot W_2\|_F^2}$ and so
	$$|(A_1)_{ij}-(\tilde{X}_1(W))_{ij}|\le\frac{\|A_2\odot W_2\|_F}{(W_1)_{ij}};~~1\le i\le m, 1\le j\le k.$$ Thus
	$$ \tilde{X}_1(W)= A_1+\displaystyle{O(\frac{1}{\lambda})}\;\text{as}\;\lambda\to\infty.$$
	\noindent{\it (ii).} Note that as $\lambda\to \infty$, $\tilde{X}_1(W)\to A_1$. So, $\tilde{X}_1(W)$ will be a full rank matrix (since $A_1$ is assumed to be of full rank).
	Applying (i) and Lemma~\ref{project estimation} to $B=A_1$ and $\tilde{B}=\tilde{X}_1(W)$, we have, as $\lambda\to \infty$,
	$$
	\|P_{A_1}-P_{\tilde{X}_1(W)}\|_F \leq \frac{2}{\eta (\tilde{X}_1(W))} O(\frac{1}{\lambda}).
	$$
	Since 	$\eta (\tilde{X}_1(W))\to \eta(A_1)>0$, we have $\eta (\tilde{X}_1(W))\geq \eta(A_1)/2$ as
	$\lambda\to \infty$. Thus, (ii) holds.
\\	
	{\it (iii)} We know that
	$$
	P_{\tilde{X}_1(W)}(A_2)+P_{\tilde{X}_1(W)}^\perp(A_2)=A_2=P_{A_1}(A_2)+P_{A_1}^{\perp}(A_2).
	$$
	Using (ii)
	\begin{align*}
	& P_{A_1}(A_2)+\displaystyle{O(\frac{1}{\lambda})}+P_{\tilde{X}_1(W)}^\perp(A_2)=P_{A_1}(A_2)+P_{A_1}^{\perp}(A_2),~\lambda\to\infty.
	\end{align*}
	Therefore,
	\begin{eqnarray}\label{perp}
	& P_{\tilde{X}_1(W)}^\perp(A_2)=P_{A_1}^\perp(A_2)+O(\frac{1}{\lambda}),~\lambda\to\infty.
	\end{eqnarray}
	This completes the proof of Lemma~\ref{lemma 1}.\qquad\end{proof}
\begin{remark}
	{\rm (i) For the case when there is an uniform weight in $(W_1)_{ij}=\lambda>0$, one might refer to~\cite{stewart2} for an alternative proof of Lemma~\ref{lemma 1}. But the proof in~\cite{stewart2} can not be applied in the more general weight. (ii) There is a more elementary proof of (ii)~of Lemma~\ref{lemma 1} above by using the Gram-Schmidt process~(see~\cite{dutta_thesis}), instead of using the advanced tools from the perturbation theory of singular values of Stewart and Wedin. We choose to use the shorter proof here because we need to invoke the advanced theory for the proof of the next lemma anyway.}
\end{remark}

Next, we establish a key lemma on the hard thresholding operator under perturbation. We use the notation introduced at the beginning of Section 2.
\begin{lemma}\label{lemma 2}
	Let $W_2$ be bounded. If $\sigma_{r-k}>\sigma_{r-k+1},$ then
		\begin{eqnarray}\label{per_SVD1}
		H_{r-k}(\tilde{\mathcal{A}})=H_{r-k}(\mathcal{A})+\displaystyle{O(\frac{1}{\lambda})},~\lambda\to\infty.
		\end{eqnarray}
\end{lemma}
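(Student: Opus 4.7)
The plan is to treat $\tilde{\mathcal{A}}$ as a small perturbation of $\mathcal{A}$ and then apply the perturbation bound for the hard thresholding operator provided by Lemma~\ref{lemma 4}. By part (iii) of Lemma~\ref{lemma 1}, $E := \tilde{\mathcal{A}} - \mathcal{A}$ satisfies $\|E\|_F = O(1/\lambda)$ as $\lambda\to\infty$. Decompose
$$
\mathcal{A} = B_1 + B_2, \qquad \tilde{\mathcal{A}} = \tilde{B}_1 + \tilde{B}_2
$$
as in~(\ref{SVDA})--(\ref{SVDA1}) at the level $r-k$, so that $B_1 = H_{r-k}(\mathcal{A})$ and $\tilde{B}_1 = H_{r-k}(\tilde{\mathcal{A}})$. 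Then $\sigma_{\min}(B_1)=\sigma_{r-k}$ and $\sigma_{\max}(B_2)=\sigma_{r-k+1}$, and by hypothesis there is a positive gap $g := \sigma_{r-k}-\sigma_{r-k+1}>0$.

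Next, I would fix $\alpha$ and $\delta$ strictly inside this gap. A convenient choice is $\alpha := \sigma_{r-k+1} + g/4$ and $\delta := g/2$, so that
$$
\sigma_{\max}(B_2) = \sigma_{r-k+1} < \alpha \quad\text{and}\quad \alpha + \delta = \sigma_{r-k+1} + 3g/4 < \sigma_{r-k}.
$$
It remains to check that $\sigma_{\min}(\tilde{B}_1) = \sigma_{r-k}(\tilde{\mathcal{A}}) \ge \alpha + \delta$ for all sufficiently large $\lambda$. This follows from Weyl's inequality (or equivalently from Lemma~\ref{lemma 3}), which gives $|\sigma_{r-k}(\tilde{\mathcal{A}}) - \sigma_{r-k}(\mathcal{A})| \le \|E\| = O(1/\lambda)$; thus $\sigma_{r-k}(\tilde{\mathcal{A}}) \ge \sigma_{r-k} - g/4 > \alpha + \delta$ once $\lambda$ is large enough.

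With these choices the hypotheses of Lemma~\ref{lemma 4} are satisfied, and it yields
$$
\|B_1 - \tilde{B}_1\| \;\le\; \|E\|\Bigl(3 + \frac{\|B_2\|}{\delta} + \frac{\|\tilde{B}_2\|}{\delta}\Bigr).
$$
Since $\|B_2\| \le \|\mathcal{A}\|$ is a fixed constant and $\|\tilde{B}_2\| \le \|\tilde{\mathcal{A}}\| \le \|\mathcal{A}\|+\|E\|$ is uniformly bounded for large $\lambda$, the factor in parentheses is bounded independently of $\lambda$. Hence $\|H_{r-k}(\tilde{\mathcal{A}})-H_{r-k}(\mathcal{A})\| = O(\|E\|) = O(1/\lambda)$, and the Frobenius-norm statement then follows by equivalence of norms on finite-dimensional spaces.

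The only delicate point is the choice of $\alpha,\delta$ together with the verification that $\sigma_{\min}(\tilde{B}_1) \ge \alpha+\delta$ holds \emph{uniformly} for large $\lambda$; once the gap $g>0$ is exploited to create a buffer strictly inside $(\sigma_{r-k+1}, \sigma_{r-k})$, the rest is a routine application of Lemmas~\ref{lemma 3} and~\ref{lemma 4}.
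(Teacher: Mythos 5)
Your proof is correct and takes essentially the same route as the paper: Lemma~\ref{lemma 1}(iii) gives $E=\tilde{\mathcal{A}}-\mathcal{A}=O(1/\lambda)$, and Lemma~\ref{lemma 4} is applied with $\alpha,\delta$ chosen inside the gap $\sigma_{r-k}-\sigma_{r-k+1}$ to bound $\|H_{r-k}(\mathcal{A})-H_{r-k}(\tilde{\mathcal{A}})\|$ by a constant times $\|E\|$. Your explicit choice of $\alpha$, the use of Weyl's inequality to verify $\sigma_{\min}(\tilde{B}_1)\ge\alpha+\delta$, and the direct bound $\|\tilde{B}_2\|\le\|\tilde{\mathcal{A}}\|\le\|\mathcal{A}\|+\|E\|$ are in fact slightly cleaner than the paper's bookkeeping (which invokes Lemma~\ref{lemma 3} and unitary invariance for the same purpose), but the underlying argument is identical.
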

\begin{proof}
	Apply (\ref{SVDA}) and (\ref{SVDA1}) with $B=\mathcal{A}$ and $\tilde{B}=\tilde{\mathcal{A}}$. Then by (iii) of Lemma~\ref{lemma 1}, we know that $E=\tilde{\mathcal{A}}-{\mathcal{A}}=O(\frac{1}{\lambda}).$ Indeed, with the non-increasing arrangement of the singular values in $\Sigma$ and $\tilde{\Sigma}$, and the fact that $E=\displaystyle{O(\frac{1}{\lambda})}$ as $\lambda\to\infty$, Lemma~\ref{lemma 3} immediately implies that
	\begin{align}\label{svd bound}
	\Sigma_1-\tilde{\Sigma}_1=\displaystyle{O(\frac{1}{\lambda})}~\;\;\text{and}\;\;\Sigma_2-\tilde{\Sigma}_2=O(\frac{1}{\lambda})\;\;\text{as}\;\lambda\to\infty.
	\end{align}
	Note that, ${\rm r}(\mathcal{A}_1)={\rm r}(\tilde{\mathcal{A}}_1)=r-k,$ and, since $\sigma_{r-k}>\sigma_{r-k+1},$ we can choose $\delta$ such that
	$$
	\delta=\frac{1}{2}(\sigma_{r-k}-\sigma_{r-k+1})>0.
	$$
In this way, for all large $\lambda$ the assumption of Lemma~\ref{lemma 4} will be satisfied. Since $\mathcal{A}_1=H_{r-k}(\mathcal{A})$ and  $\tilde{\mathcal{A}}_1=H_{r-k}(\tilde{\mathcal{A}}),$~(\ref{thresholding inequality}) can be written as
	\begin{eqnarray}\label{thresholding inequality1}
	\|H_{r-k}(\mathcal{A})-H_{r-k}(\tilde{\mathcal{A}})\|\le \|E\|(3+\frac{\|\mathcal{A}_2\|}{\delta}+\frac{\|\tilde{\mathcal{A}}_2\|}{\delta}).
	\end{eqnarray}
	Since ${\mathcal{A}_2}$ is fixed, $\|{\mathcal{A}_2}\|=O(1)$ as $\lambda\to\infty$. On the other hand, by~(\ref{svd bound}), as $\lambda\to\infty$, $$\tilde{\mathcal{A}_2}=\tilde{U}_2\tilde{\Sigma}_2\tilde{V}_2^T=\tilde{U}_2({\Sigma}_2+O(\frac{1}{\lambda}))\tilde{V}_2^T=\tilde{U}_2\Sigma_2\tilde{V}_2^T+O(\frac{1}{\lambda}\tilde{U}_2\tilde{V}_2^T).$$
	Now the unitary invariance of the matrix norm implies,
	\begin{align*} \|\tilde{\mathcal{A}_2}\|\le\|\tilde{U}_2\Sigma_2\tilde{V}_2^T\|+O(\frac{1}{\lambda}\|\tilde{U}_2\tilde{V}_2^T\|)=\|\Sigma_2\|+O(\frac{1}{\lambda}),
	\end{align*}
	which is bounded as $\lambda\to\infty$.
	Therefore~(\ref{thresholding inequality1}) becomes
	\begin{eqnarray}
	\|H_{r-k}(\mathcal{A})-H_{r-k}(\tilde{\mathcal{A}})\|\le C\|E\|,
	\end{eqnarray}
	for some constant $C>0$ and for all large $\lambda\to\infty$.
	Thus
	$$
	H_{r-k}(\tilde{\mathcal{A}})=H_{r-k}(\mathcal{A})+\displaystyle{O(\frac{1}{\lambda})},\lambda\to\infty,
	$$
	since $E=\displaystyle{O(\frac{1}{\lambda})}$ as $\lambda\to\infty$. This completes the proof of Lemma~\ref{lemma 2}.
\end{proof}

\noindent{\it Proof of Theorem~\ref{theorem 3}.} We first note that, for all $X_2$ with $r(\tilde{X}_1(W)\;~X_2)\leq r$,
$$
\|((A_1\;~A_2)-(\tilde{X}_1(W)\;~\tilde{X}_2(W)))\odot (W_1\;~\mathbbm{1})\|_F
\leq \|((A_1\;~A_2)-(\tilde{X}_1(W)\;~X_2))\odot (W_1\;~\mathbbm{1})\|_F.
$$
So, $\tilde{X}_2(W)$ solves
$$
\|A_2-\tilde{X}_2(W)\|_F
=\inf_{X_2:r(\tilde{X}_1(W)\;~X_2)\leq r} \|A_2-X_2\|_F.
$$
Thus, by Theorem~\ref{theorem 1}, $$
\tilde{X}_2(W)=P_{\tilde{X}_1(W)}(A_2)+H_{r-k}(P_{\tilde{X}_1(W)}^\perp(A_2)).$$
Therefore, using (ii) and (iii) of Lemma~\ref{lemma 1} and Lemma~\ref{lemma 2}, we get, as $\lambda\to\infty$,
$$
\tilde{X}_2(W)=P_{A_1}(A_2)+H_{r-k}(P_{A_1}^\perp(A_2))+O(\frac{1}{\lambda})=\tilde{A}_2+O(\frac{1}{\lambda}).$$
This, together with (i) of Lemma~\ref{lemma 1}, yields the desired result.~$\Box$\medskip

\noindent{\it Proof of Theorem~\ref{theorem 5}.}  Let $\hat{X}(W)=(\hat{X}_1(W)\;\;\hat{X}_2(W))$. We need to verify that $\{\hat{X}(W)\}_W$ is a bounded set and every accumulation point~(as $(W_1)_{ij}\to\infty$ and $(W_2)_{ij}\to 1$) is a solution to~(\ref{golub's problem}) for some $r$. Since $(\hat{X}_1(W)\;\;\hat{X}_2(W))$ is a solution to~(\ref{unconstrained hadamard problem}), we have
	\begin{align}\label{ineq2}
	&\|(A_1-\hat{X}_1(W))\odot W_1\|_F^2+ \|(A_2-\hat{X}_2(W))\odot W_2\|_F^2+\tau{\rm r}(\hat{X}_1(W)\;\;\hat{X}_2(W))\nonumber\\
	&\le\|(A_1-{X}_1)\odot W_1\|_F^2+\|(A_2-{X}_2)\odot W_2\|_F^2+\tau{\rm r}({X}_1\;\;{X}_2).
	\end{align}
	for all $({X}_1\;\;{X}_2).$
	By choosing $X_1=A_1,X_2=0$, we can obtain a constant $m_3:=\|A_2\odot W_2\|_F^2+\tau{\rm r}(A_1\;\;0)$ such that $\|(A_1-\hat{X}_1(W))\odot W_1\|_F^2+\|(A_2-\hat{X}_2(W))\odot W_2\|_F^2\le m_3.$ Therefore, $\{\hat{X}_1(W)\;\;\hat{X}_2(W)\}$ is bounded. Let $(X_1^{**}\;\;X_2^{**})$ be an accumulation point of the sequence for some subsequences as $(W_1)_{ij}\to\infty$ and $(W_2)_{ij}\to 1$. We only need to show that $(X_1^{**}\;\;X_2^{**})\in\displaystyle{\mathop{\cup}_{r}{\cal A}_G^r}.$ As in the proof of Lemma~\ref{lemma 1}~(i), we can show that
	\begin{align}\label{2}
	\lim_{\substack{(W_1)_{ij}\to\infty\\(W_2)_{ij}\to 1}}\hat{X}_1(W)=A_1.
	\end{align}
Now, taking limit and setting $X_1=A_1$ in~(\ref{ineq2}), we obtain,
	\begin{align}\label{limitcase}
	&\|A_2-X_2^{**}\|_F^2+\tau{\rm r}(A_1\;\;{X}_2^{**})\le\|A_2-{X}_2\|_F^2+\tau{\rm r}(A_1\;\;{X}_2),
	\end{align}
	for all ${X}_2.$ If we denote $r^{**}={\rm r}(A_1\;\;{X}_2^{**}),$ then for ${X}_2$ with $ {\rm r}(A_1\;\;{X}_2)\le r^{**},$~(\ref{limitcase}) yields
	\begin{align}
	\|A_2-X_2^{**}\|_F^2\le\|A_2-{X}_2\|_F^2.
	\end{align}
	Therefore, $X_2^{**}$ is a solution to the problem of Golub, Hoffman, and Stewart and by Theorem~\ref{theorem 1},
	$$
	X_2^{**}=P_{A_1}(A_2)+H_{r^{**}-k}\left(P^{\perp}_{A_1}(A_2)\right).
	$$
	This, together with~(\ref{2}) completes the proof.~$\Box$
	
\noindent{\it Proof of Theorem~\ref{theorem 6}.}  For convenience, we will drop the dependence on $W$ in our notations.
	Let $\hat{X}=(\hat{X}_1\;\;\hat{X}_2)$ solve the minimization problem~(\ref{unconstrained hadamard problem}).~As in the proof of Lemma~\ref{lemma 1} (i) again we have
	\begin{eqnarray}\label{R_1}
	\hat{X}_1\to A_1,~{\rm as}~(W_1)_{ij}\to\infty.
	\end{eqnarray}
Next, we show the convergence of $\hat{X}_2$: for $r^*$ satisfying $\sigma_{r^*}^2>\tau\geq \sigma_{r^*+1}^2$,
\begin{equation}\label{x2}
\hat{X}_2\to P_{A_1}(A_2)+H_{r^*}(P_{A_1}^\perp(A_2)), ~{\rm as}~(W_1)_{i,j}\to\infty,~(W_2)_{i,j}\to 1.
\end{equation}
 By Theorem~\ref{theorem 5}, $\hat{X}$ is bounded. So, to establish (\ref{x2}), we will need only to show (i) there is only one accumulation point as $(W_1)_{i,j}\to\infty,~(W_2)_{i,j}\to 1$, and (ii) this unique accumulation point is $P_{A_1}(A_2)+H_{r^*}(P_{A_1}^\perp(A_2))$ for $r^*$ satisfying $\sigma_{r^*}^2>\tau\geq \sigma_{r^*+1}^2$.

Let $X_2^*$ be any accumulation point of $\hat{X}_2$ as  $(W_1)_{i,j}\to\infty,~(W_2)_{i,j}\to 1$. 

Choosing $X_1=\hat{X}_1$ in~(\ref{ineq2})~we find, for all $X_2$,
\begin{eqnarray}\label{weighted unconstrained problem 5}
	\;\;\;\;\;\;\|(A_2-\hat{X}_2)\odot W_2\|_F^2+\tau{\rm r}(
	\hat{X}_{1}\;\;\hat{X}_{2})
	\le \|(A_2-{X}_2)\odot W_2\|_F^2+\tau{\rm r}(
	\hat{X}_{1}\;\;{X}_{2}).
	\end{eqnarray}
We will apply some ideas from Golub, Hoffman, and Stewart ~\cite{golub}. Since the weight $W_2$ gets in
the way (by destroying the unitary invariance of the norm), we have to take the limit $(W_2)_{i,j}\to 1$ at the right time.

As in~\cite{golub}, assume ${\rm r}(A_1)=k$ and consider a $QR$ decomposition of $A$ and corresponding decomposition of $\hat{X}$ and $X$:
	$$
	A=QR=(Q_1~~Q_2)\begin{pmatrix}{R}_{11}&{R}_{12}\\0& R_{22}\\0& 0\end{pmatrix}=(Q_1R_{11}~~Q_1R_{12}+Q_2\begin{pmatrix}R_{22}\\ 0\end{pmatrix})=(A_1~~A_2),
	$$
 let $$\hat{R}:=Q^T\hat{X}=\begin{pmatrix}\hat{R}_{11}&\hat{R}_{12}\\\hat{R}_{21}&\hat{R}_{22}\\\hat{R}_{31}&\hat{R}_{32}\end{pmatrix}$$ and let  $${R}^{\dagger}:=Q^TX=\begin{pmatrix}{R}^{\dagger}_{11}&{R}^{\dagger}_{12}\\{R}^{\dagger}_{21}&{R}^{\dagger}_{22}\\{R}^{\dagger}_{31}&{R}^{\dagger}_{32}\end{pmatrix}.$$ 
 Since the rank of a matrix is invariant under an unitary transformation,~(\ref{weighted unconstrained problem 5}) can be rewritten as
	\begin{eqnarray}\label{weighted unconstrained problem 6}
	&\|(A_2-\hat{X}_2)\odot W_2\|_F^2+\tau{\rm r}(
	Q^T\hat{X}_{1}\;\;Q^T\hat{X}_{2})\nonumber\\
	&\le \|(A_2-{X}_2)\odot W_2\|_F^2+\tau{\rm r}(
	Q^T\hat{X}_{1}\;\;Q^T{X}_{2}).
	\end{eqnarray}
When $\lambda=\min_{i,j}(W_1)_{i,j}$ is large enough, $\hat{R}_{11}$ is nonsingular by~(\ref{R_1}).~Using ${\rm r}(A_1)=k$~
and performing the row and column operations on the second terms on both sides of~(\ref{weighted unconstrained problem 6}), we get
\begin{align}\label{weighted unconstrained problem 7}
&\|(A_2-\hat{X}_2)\odot W_2\|_F^2+\tau {\rm r}\begin{pmatrix}
\hat{R}_{22}-\hat{R}_{21}\hat{R}_{11}^{-1}\hat{R}_{12} \\
\hat{R}_{32}-\hat{R}_{31}\hat{R}_{11}^{-1}\hat{R}_{12}\\
\end{pmatrix}\nonumber\\&\le\|(A_2-{X}_2)\odot W_2\|_F^2+\tau {\rm r}\begin{pmatrix}
{R}^{\dagger}_{22}-\hat{R}_{21}\hat{R}_{11}^{-1}{R}^{\dagger}_{12} \\
{R}^{\dagger}_{32}-\hat{R}_{31}\hat{R}_{11}^{-1}{R}^{\dagger}_{12}\\
\end{pmatrix},
\end{align}
for all $R^{\dagger}_{12},{R}^{\dagger}_{22},~{\rm and}~{R}^{\dagger}_{32}.$

 Note that $\displaystyle{\begin{pmatrix}\hat{R}_{12}^{*}\\\hat{R}_{22}^{*}\\\hat{R}_{32}^{*}\end{pmatrix}}:=Q^T\hat{X}^*_2$ is an accumulation point for a subsequence, $\Lambda$ say, of $\begin{pmatrix}\hat{R}_{12}\\\hat{R}_{22}\\\hat{R}_{32}\end{pmatrix}=Q^T\hat{X}_2.$ From~(\ref{R_1}), using the fact that $\hat{R}_{11}\to R_{11},\hat{R}_{21}\to 0,~{\rm and}~\hat{R}_{31}\to 0,$ as $(W_1)_{ij}\to\infty, (W_2)_{ij}\to1$ in~(\ref{weighted unconstrained problem 7}) we get, by taking limit along the subsequence $\Lambda$,
\begin{align}\label{weighted unconstrained problem 8}
&\|A_2-\hat{X}_2^*\|_F^2+\tau {\rm r}\begin{pmatrix}
\hat{R}_{22}^*\\
\hat{R}_{32}^*\\
\end{pmatrix}
\le\|A_2-{X}_2\|_F^2+\tau {\rm r}\begin{pmatrix}
{R}^{\dagger}_{22}\\
{R}^{\dagger}_{32}\\
\end{pmatrix},
\end{align}
for all ${R}^{\dagger}_{12},{R}^{\dagger}_{22},~{\rm and}~{R}^{\dagger}_{32}.$ Since Frobenius norm is unitarily invariant, (\ref{weighted unconstrained problem 8}) reduces to
\begin{align}\label{weighted unconstrained problem 9}
&\|\begin{pmatrix} R_{12}\\
R_{22}\\
0\\
\end{pmatrix}-\begin{pmatrix} \hat{R}_{12}^*\\
\hat{R}_{22}^*\\
\hat{R}_{32}^*\\
\end{pmatrix}\|_F^2+\tau {\rm r}\begin{pmatrix}
\hat{R}_{22}^*\\
\hat{R}_{32}^*\\
\end{pmatrix}
\le\|\begin{pmatrix} R_{12}\\
R_{22}\\
0\\
\end{pmatrix}-\begin{pmatrix} {R}_{12}^{\dagger}\\
{R}_{22}^{\dagger}\\
{R}_{32}^{\dagger}\\
\end{pmatrix}\|_F^2+\tau {\rm r}\begin{pmatrix}
{R}^{\dagger}_{22}\\
{R}^{\dagger}_{32}\\
\end{pmatrix},
\end{align}
for all ${R}^{\dagger}_{12},{R}^{\dagger}_{22},~{\rm and}~{R}^{\dagger}_{32}.$
Substituting ${R}^{\dagger}_{22}=\hat{R}_{22}^*, {R}^{\dagger}_{32}=\hat{R}_{32}^*$, and~$R^{\dagger}_{12}=R_{12}$, in~(\ref{weighted unconstrained problem 9}) yields
\begin{align*}
&\|{R}_{12}-\hat{R}^*_{12}\|_F^2
\le 0,
\end{align*}
and so $R^*_{12}=R_{12}$. Now, substituting $R^{\dagger}_{12}=\hat{R}^*_{12}$ in~(\ref{weighted unconstrained problem 9}) we find
 \begin{align}\label{weighted unconstrained problem 11}
&\|\begin{pmatrix}{R}_{22}\\0\end{pmatrix}-\begin{pmatrix}
\hat{R}_{22}^*\\
\hat{R}_{32}^*\\
\end{pmatrix}\|_F^2+\tau {\rm r}\begin{pmatrix}
\hat{R}_{22}^*\\
\hat{R}_{32}^*\\
\end{pmatrix}
\le\|\begin{pmatrix}{R}_{22}\\0\end{pmatrix}-\begin{pmatrix}
{R}^{\dagger}_{22}\\
{R}^{\dagger}_{32}\\
\end{pmatrix}\|_F^2+\tau {\rm r}\begin{pmatrix}
{R}^{\dagger}_{22}\\
{R}^{\dagger}_{32}\\
\end{pmatrix},
\end{align}
for all ${R}^{\dagger}_{22},{R}^{\dagger}_{32}$. Let $\bar{R}^*=\begin{pmatrix}
\hat{R}_{22}^{*}\\\hat{R}_{32}^{*}\end{pmatrix}$ and $r^*={\rm r}(\bar{R}^*),$ then~(\ref{weighted unconstrained problem 11}) implies
\begin{align}\label{R22lowrank}
\|\begin{pmatrix}{R}_{22}\\0\end{pmatrix}-\bar{R}^*\|_F^2\le \|\begin{pmatrix}{R}_{22}\\0\end{pmatrix}-{R}^*\|_F^2,
\end{align}
for all ${R}^*\in\mathbb{R}^{(m-k)\times(n-k)}$ with ${\rm r}({R}^*)\le r^*.$~So $\bar{R}^*$ solves a problem of classical low rank approximation of $\begin{pmatrix}{R}_{22}\\0\end{pmatrix}$. Note that, $Q_2\begin{pmatrix}{R}_{22}\\0\end{pmatrix}=P^{\perp}_{A_1}(A_2)$~(see~Theorem 1 in~\cite{golub}). Since $P^{\perp}_{A_1}(A_2)$ has distinct singular values,~there exists a unique $\bar{R}^*$ which is given by $\bar{R}^*=H_{r^*}\begin{pmatrix}R_{22}\\0\end{pmatrix}$ as in~(\ref{hardthresholding}).~Therefore,
 $$
\lim_{\Lambda} Q^T\hat{X}_2 = Q^T\hat{X}_2^*=\begin{pmatrix}R_{12}\\ H_{r^*}\begin{pmatrix}R_{22}\\0\end{pmatrix}\end{pmatrix},$$
 which, together with~(\ref{R_1}), 
 implies
\begin{align*}
\displaystyle{\lim_{\Lambda}}(\hat{X}_1\;\;\hat{X}_2)
&=Q\begin{pmatrix}  & R_{12}\\
R_1  & H_{r^*}\begin{pmatrix}
R_{22}\\
0
\end{pmatrix}
\end{pmatrix}\\
&=(A_1\;\;\;Q_1R_{12}+H_{r^*}\left(Q_2\begin{pmatrix}
R_{22}\\
0
\end{pmatrix}\right)),
\end{align*}
which is the same as
$$
\left(A_1\;\;\;\;P_{A_1}(A_2)+H_{r^*}\left(P^{\perp}_{A_1}(A_2)\right)\right).
$$

Finally, if we can show that $r^*$ does not depend on $\Lambda$ then we know that all accumulation points equal and therefore, we can complete the proof. In fact, $r^*$ depends only on $\tau$ as we see from the argument below.

Assume that
$$
\begin{pmatrix}{R}_{22}\\0\end{pmatrix}=U\Sigma V^T
$$ is a SVD of $\begin{pmatrix}{R}_{22}\\0\end{pmatrix}.$ Then, for any ${R}^*\in\mathbb{R}^{(m-k)\times (n-k)},$~(\ref{weighted unconstrained problem 11}) gives
\begin{eqnarray}\label{QR inequality}
&\|\Sigma-U^T\bar{R}^*V\|_F^2+\tau {\rm r}(U^T\bar{R}^*V)\nonumber\\
&\le\|\Sigma-U^T{R}^*V\|_F^2+\tau{\rm r}(U^T{R}^*V),
\end{eqnarray}
Since $r^*={\rm r}(\bar{R}^*)$ and $
U^T\bar{R}^{*} V={\rm diag}(\sigma_1\;\sigma_2\;\cdots\sigma_{r^*}\;0\cdots 0)$,
choosing ${R}^*$ such that
$$
U^T{R}^*V={\rm diag}(\sigma_1\;\sigma_2\;\cdots\sigma_{r^*+1}\;0\cdots 0),
$$
and using~(\ref{QR inequality}) we find
\begin{align*}
\sigma_{r^*+2}^2+\cdots+\sigma_n^2+\tau\ge\sigma_{r^*+1}^2+\sigma_{r^*+2}^2+\cdots+\sigma_n^2.
\end{align*}
Next we choose ${R}^*$ such that
$$
U^T{R}^*V={\rm diag}(\sigma_1\;\sigma_2\;\cdots\sigma_{r^*-1}\;0\cdots 0),
$$
and so ${\rm r}(R^*)=r^*-1<r^*$. Now~(\ref{R22lowrank}) and Eckart-Young-Mirsky's theorem imply the equality in~(\ref{QR inequality}) can not hold. So,
\begin{align*}
\sigma_{r^*}^2+\cdots+\sigma_n^2-\tau>\sigma_{r^*+1}^2+\sigma_{r^*+2}^2+\cdots+\sigma_n^2.
\end{align*}
Therefore, we obtain
\begin{eqnarray}\label{sigma inequality}
\sigma_{r^*}^2>\tau\ge\sigma_{r^*+1}^2.
\end{eqnarray}
It is easy to see that if (\ref{sigma inequality}) holds then ${\rm r}(\bar{R}^*)=r^*.$
So,
$$
{\rm r}(\bar{R}^*)=r^*\;\text{if and only if}\;\sigma_{r^*}^2>\tau\ge\sigma_{r^*+1}^2.
$$
This completes the proof.~$\Box$
\section{Numerical algorithm}

In this section we propose a numerical algorithm to solve (\ref{hadamard problem}) when $W_2=\mathbbm{1}$, which, in general, does not have a closed form solution~\cite{srebro,manton}.
Note~(\ref{hadamard problem}) can be written as~(with $W_2=\mathbbm{1}$)
\begin{align*}
\min_{\substack{X_1,X_2\\{\rm r}(X_1\;\;X_2)\le r}}\left(\|(A_1-X_1)\odot W_1\|_F^2+\|A_2-X_2\|_F^2\right).
\end{align*}
We assume that ${\rm r}(X_1)=k$. It can be verified that any $X_2$ such that ${\rm r}(X_1\;\;X_2)\le r$ can be given in the form
$$X_2=X_1C+BD,$$ for some arbitrary matrices $B\in\mathbb{R}^{m\times (r-k)},$ $D\in\mathbb{R}^{(r-k)\times (n-k)},$ and $C\in\mathbb{R}^{k\times (n-k)}.$ Hence we need to solve
\begin{align}\label{main problem 2}
\min_{X_1,C,B,D}\left(\|(A_1-X_1)\odot W_1\|_F^2+\|A_2-X_1C-BD\|_F^2\right).
\end{align}
Note that, using a block structure, we can write (\ref{main problem 2}) as a weighted low rank approximation (with a special low rank structure):
\begin{align*}
\min_{X_1,C,B,D}\left\{\|\left((A_1\;\;A_2)-(X_1\;\;B)\begin{pmatrix} I_k & C\\0 & D\end{pmatrix}\right)\odot (W_1\;\;\mathbbm{1})\|_F^2\right\},
\end{align*}
which is a form of the alternating weighted least squares problem in the literature~\cite{srebro, markovosky}. But we will not follow the general algorithm proposed in~\cite{markovosky}~for the following reasons:~(i)~due to the special structure of the weight, our algorithm is more efficient than~\cite{markovosky}~(see Algorithm 3.1,~in p.~42~\cite{markovosky}),~(ii) it allows a detailed convergence analysis which is usually not available in other algorithms proposed in the literature~\cite{srebro,manton,markovosky}, and (iii) it can handle bigger size matrices as we will demonstrate in the numerical result section.
If $k=0,$ then~(\ref{main problem 2}) reduces to an unweighted rank $r$ factorization of $A_2$ and can be solved as an alternating least squares problem~\cite{mazumder,byod,hansohm}.

\subsection{Optimization procedure}
Denote $F(X_1,C,B, D)=\|(A_1-X_1)\odot W_1\|_F^2+\|A_2-X_1C-BD\|_F^2$ as the objective function. The above problem can be numerically solved by using an alternating strategy~\cite{LinChenMa, Liu} of minimizing the function with respect to each component iteratively:
\begin{align}\label{update rule}
\left\{\begin{array}{ll}
\displaystyle{(X_1)_{p+1}=\arg\min_{X_1}F(X_1,C_p,B_p,D_p)},\\
\displaystyle{C_{p+1}=\arg\min_{C}F((X_1)_{p+1},C,B_p,D_p)},\\
\displaystyle{B_{p+1}=\arg\min_{B}F((X_1)_{p+1},C_{p+1},B,D_p)},\\
\text{and,}\; \displaystyle{D_{p+1}=\arg\min_{D}F((X_1)_{p+1},C_{p+1},B_{p+1},D)}.
\end{array}\right.
\end{align}
Note that each of the minimizing problem for $X_1, C, B,$ and $D$ can be solved explicitly by looking at the gradients of $F(X_1,C,B,D)$. But finding an update rule for $X_1$ turns out to be more involved than the other three variables due to the interference of the weight $W_1$. We update $X_1$ element wise along each row. Therefore we will use the notation $X_1(i,:)$ to denote the $i$-th row of the matrix $X_1$. We set $\frac{\partial}{\partial X_1}F(X_1,C_p,B_p,D_p)|_{X_1=(X_1)_{p+1}}=0$  and obtain
\begin{align}\label{x_p}
-(A_1-(X_1)_{p+1})\odot W_1\odot W_1-(A_2-(X_1)_{p+1}C_p-B_pD_p)C_p^T=0.
\end{align}
Solving the above expression for $X_1$ sequentially along each row gives
\begin{align*}
(X_1(i,:))_{p+1}=(E(i,:))_p({\rm diag}(W_1^2(i,1)\;W_1^2(i,2)\cdots W_1^2(i,k))+C_pC_p^T)^{-1},
\end{align*}
where $E_p=A_1\odot W_1\odot W_1+(A_2-B_pD_p)C_p^T$. Note that, for each row $X_1(i,:)$, if $\mathcal{L}_i={\rm diag}(W_1^2(i,1)\;W_1^2(i,2)\cdots W_1^2(i,k))+C_pC_p^T$ then the above system of equations are equivalent to solving a least squares solution of $\mathcal{L}_i(X_1(i,:))_{p+1}^T=(E(i,:))_p^T$ for each $i$.
Next we find $C_{p+1}$ by setting
$$
\frac{\partial}{\partial C}F(X_1,C,B_p,D_p)|_{C = C_{p+1}}=0,
$$
which implies
\begin{align}\label{C_p}
-(X_1)_{p+1}^T(A_2-(X_1)_{p+1}C_{p+1}-B_pD_p)=0,
\end{align}
and consequently solving for $C_{p+1}$ gives~(assuming $(X_1)_{p+1}$ is of full rank)
$$
C_{p+1}=((X_1)_{p+1}^T(X_1)_{p+1})^{-1}((X_1)_{p+1}^TA_2-(X_1)_{p+1}^TB_pD_p).
$$
Similarly, $B_{p+1}$ satisfies
\begin{align}\label{B_p}
-(A_2-(X_1)_{p+1}C_{p+1}-B_{p+1}D_p)D_p^T=0.
\end{align}
Solving~(\ref{B_p}) for $B_{p+1}$ obtains~(assuming $D_p$ is of full rank)
$$
B_{p+1}=(A_2D_p^T-(X_1)_{p+1}C_{p+1}D_p^T)(D_pD_p^T)^{-1}.
$$
Finally, $D_{p+1}$ satisfies
\begin{align}\label{D_p}
-B_{p+1}^T(A_2-(X_1)_{p+1}C_{p+1} -B_{p+1}D_{p+1})=0,
\end{align}
and we can write~(assuming $B_{p+1}$ is of full rank)
$$
D_{p+1}=(B_{p+1}^TB_{p+1})^{-1}(B_{p+1}^TA_2-B_{p+1}^T(X_1)_{p+1}C_{p+1}).
$$
We arrive at the following algorithm.
\begin{algorithm}
	\SetAlgoLined
	\SetKwInOut{Input}{Input}
	\SetKwInOut{Output}{Output}
	\SetKwInOut{Init}{Initialize}
	\nl\Input{$A=(A_1\;\;A_2) \in\mathbb{R}^{m\times n}$ (the given matrix); $W= (W_1\;\;\mathbbm{1})\in\mathbb{R}^{m\times n}$ (the weight), threshold $\epsilon>0$\;}
	\nl\Init {$(X_1)_0,C_0,B_0,D_0$\;}
	\BlankLine
	\nl \While{not converged}
	{
		\nl $E_p=A_1\odot W_1\odot W_1+(A_2-B_pD_p)C_p^T$\;
		\nl \For {$i=1:m$}
		{	
			\nl $\displaystyle{(X_1(i,:))_{p+1}=(E(i,:))_p({\rm diag}(W_1^2(i,1)\;W_1^2(i,2)\cdots W_1^2(i,k))+C_pC_p^T)^{-1}}$\;
		}
		\nl $C_{p+1}=((X_1)_{p+1}^T(X_1)_{p+1})^{-1}(X_1)_{p+1}^T(A_2-B_pD_p)$\;
		\nl $B_{p+1}=(A_2-(X_1)_{p+1}C_{p+1})D_p^T(D_pD_p^T)^{-1}$\;
		\nl $D_{p+1}=(B_{p+1}^TB_{p+1})^{-1}B_{p+1}^T(A_2-(X_1)_{p+1}C_{p+1})$\;
		\nl $p=p+1$\;
	}
	\BlankLine
	\nl \Output{$(X_1)_{p+1}, (X_1)_{p+1}C_{p+1}+B_{p+1}D_{p+1}.$}
	\caption{WLR Algorithm}
\end{algorithm}
\subsection{Complexity of the algorithm}
In this section we discuss the runtime complexity of Algorithm 1 by making some simplifying assumptions.~The update in Step 4 takes $O(mk(n-k))+O(m(r-k)(n-k))$ time. The matrix inversion in Step 6 for updating each row of $X_1$ takes $O(k^3)$ time and the total computational time for $m$ rows is $O(mk^3)+O(mk(2k-1))$. Indeed, the costly procedures in Step 7, 8,~and 9 are the matrix inversions. The matrix product and inversion in Step 7 takes $O(k^2(m+k))$ time, where the inner matrix product takes $O(m(r-k)(n-k)+mk(n-k))$ and the final product takes $O(k^2(n-k))$ time. In Step 8, the inner matrix product takes $O(k(n-k)(r-k)+m(r-k)(n-k))$ time. The matrix product and inversion takes $O((r-k)^2(r+n-2k))$ time and the final product takes $O(m(r-k)^2)$ time.
Finally in Step 9, as before the main cost is due to the matrix product and inversion, which takes $O((r-k)^2(m+r-k))$ time. The matrix product in Step 9 takes $O((r-k)m(n-k))+O((r-k)^2(n-k))$ time. In summary, the total complexity of the algorithm is $O(mk^3+mnr)$.

\subsection{Convergence analysis} Next we will discuss the convergence of our numerical algorithm. Since the objective function $F$ is convex only in each of the component $X_1,B,C,$ and $D$, it is hard to argue about the global convergence of the algorithm. In Theorem~\ref{theorem 8} and \ref{theorem 9}, under some special assumptions when the limit of the individual sequence exists, we show that the limit points are going to be a stationary point of $F$. To establish our main convergence results in Theorem~\ref{theorem 8} and \ref{theorem 9}, the following equality will be very helpful.

\begin{theorem}\label{theorem 7}
	For a fixed $(W_1)_{ij}>0$ let $m_p=F((X_1)_p,C_p,B_p,D_p)$ for $p=1,2,\cdots$
.
	Then,\begin{align}\label{mj equality}
	m_p-m_{p+1}=&\|((X_1)_p-(X_1)_{p+1})\odot W_1\|^2_F+\|((X_1)_p-(X_1)_{p+1})C_p\|^2_F\nonumber\\&+\|(X_1)_{p+1}(C_p-C_{p+1})\|^2_F
	+\|(B_p-B_{p+1})D_p\|^2_F+\|B_{p+1}(D_p-D_{p+1})\|^2_F.
	\end{align}
\end{theorem}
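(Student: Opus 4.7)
The plan is to write $m_p-m_{p+1}$ as a telescoping sum across the four coordinate updates,
\begin{align*}
m_p-m_{p+1} &= \bigl[F((X_1)_p,C_p,B_p,D_p)-F((X_1)_{p+1},C_p,B_p,D_p)\bigr]\\
&\quad+\bigl[F((X_1)_{p+1},C_p,B_p,D_p)-F((X_1)_{p+1},C_{p+1},B_p,D_p)\bigr]\\
&\quad+\bigl[F((X_1)_{p+1},C_{p+1},B_p,D_p)-F((X_1)_{p+1},C_{p+1},B_{p+1},D_p)\bigr]\\
&\quad+\bigl[F((X_1)_{p+1},C_{p+1},B_{p+1},D_p)-F((X_1)_{p+1},C_{p+1},B_{p+1},D_{p+1})\bigr],
\end{align*}
and then to evaluate each bracket separately using a single elementary identity for convex quadratics.

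The identity I will use is: if $\phi(Y)=\|L(Y)-Z\|_F^2$ for some linear map $L$, and $Y^\star$ is a critical point of $\phi$ (i.e.\ the Fr\'{e}chet derivative $L^\ast(L(Y^\star)-Z)$ vanishes), then for every $Y$,
\[
\phi(Y)-\phi(Y^\star)=\|L(Y-Y^\star)\|_F^2,
\]
because the first-order term in the expansion $\phi(Y^\star+\Delta)=\phi(Y^\star)+2\langle L(Y^\star)-Z,L(\Delta)\rangle+\|L(\Delta)\|_F^2$ is killed by the optimality condition. Each of the four subproblems in~(\ref{update rule}) is exactly of this form, with the optimality condition being, respectively, (\ref{x_p}), (\ref{C_p}), (\ref{B_p}), and~(\ref{D_p}).

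Applying the identity bracket by bracket: For the $X_1$-update, view $F(\cdot,C_p,B_p,D_p)$ as $\|L_1(X_1)-Z_1\|_F^2$ with $L_1(X_1)=(X_1\odot W_1,\,X_1 C_p)$ (treated as a pair); then (\ref{x_p}) is precisely $L_1^\ast(L_1((X_1)_{p+1})-Z_1)=0$, and the first bracket equals $\|((X_1)_p-(X_1)_{p+1})\odot W_1\|_F^2+\|((X_1)_p-(X_1)_{p+1})C_p\|_F^2$. For the $C$-update only the second summand of $F$ depends on $C$, and by (\ref{C_p}) the second bracket equals $\|(X_1)_{p+1}(C_p-C_{p+1})\|_F^2$. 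Analogously, (\ref{B_p}) gives $\|(B_p-B_{p+1})D_p\|_F^2$ for the third bracket, and (\ref{D_p}) gives $\|B_{p+1}(D_p-D_{p+1})\|_F^2$ for the fourth. Summing the four expressions yields (\ref{mj equality}).

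There is no real obstacle: the only mildly non-routine point is the $X_1$ step, where the objective has two quadratic pieces (one weighted by $W_1$ through a Hadamard product, one involving right-multiplication by $C_p$), so care is needed to combine them into a single linear map $L_1$ and recognize (\ref{x_p}) as the resulting normal equation. Once this bookkeeping is done, the cross terms vanish by the optimality condition exactly as in the simpler $C$, $B$, $D$ cases, and the five squared-norm terms fall out in the stated order.
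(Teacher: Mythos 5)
Your proposal is correct and follows essentially the same route as the paper: the paper likewise telescopes $m_p-m_{p+1}$ into the four per-coordinate decreases $d_1,\dots,d_4$ and evaluates each one using the stationarity equations (\ref{x_p})--(\ref{D_p}) to kill the cross terms. Your only difference is cosmetic: you package the expansion-plus-cancellation as a single identity for convex quadratics $\phi(Y)-\phi(Y^\star)=\|L(Y-Y^\star)\|_F^2$, whereas the paper carries out the expansion explicitly for $d_1$ and states the analogous results for $d_2,d_3,d_4$.
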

\begin{proof}
	Denote
	\begin{align}\label{update rule 1}
	\left\{\begin{array}{ll}
	m_p-F((X_1)_{p+1},C_{p},B_{p},D_{p})=d_1,\\
	F((X_1)_{p+1},C_{p},B_{p},D_{p})-F((X_1)_{p+1},C_{p+1},B_{p},D_{p})=d_2,\\
	F((X_1)_{p+1},C_{p+1},B_{p},D_{p})-F((X_1)_{p+1},C_{p+1},B_{p+1},D_{p})=d_3,\\
	\text{and,}\; F((X_1)_{p+1},C_{p+1},B_{p+1},D_{p})-m_{p+1}=d_4.
	\end{array}\right.
	\end{align}
	Therefore,
	\begin{align}\label{update rule calc}
	d_1&=\|(A_1-(X_1)_{p})\odot W_1\|_F^2+\|A_2-(X_1)_pC_p-B_pD_p\|_F^2-\|(A_1-(X_1)_{p+1})\odot W_1\|_F^2\nonumber\\
	&-\|A_2-(X_1)_{p+1}C_p-B_pD_p\|_F^2\nonumber\\
	&=\|(X_1)_{p}\odot W_1\|_F^2-\|(X_1)_{p+1}\odot W_1\|_F^2+\|(X_1)_{p}C_p\|_F^2-\|(X_1)_{p}C_{p+1}\|_F^2
	+2\langle A_1\odot W_1\odot W_1,\nonumber\\&(X_1)_{p+1}-(X_1)_{p}\rangle-2\langle ((X_1)_{p}-(X_1)_{p+1})C_p,A_2-B_pD_p\rangle.
	\end{align}
	Note that,
	\begin{align*}
	(((X_1)_{p+1}-A_1)\odot W_1\odot W_1)((X_1)_{p}-(X_1)_{p+1})^T\\=(A_2-(X_1)_{p+1}C_p-B_pD_p)C_p^T((X_1)_{p}-(X_1)_{p+1})^T,
	\end{align*}
	as $(X_1)_{p+1}$ satisfies~(\ref{x_p}).
	This, together with~(\ref{update rule calc}), will lead us to
	\begin{align}\label{update rule calc 1}
	d_1=\|((X_1)_p-(X_1)_{p+1})\odot W_1\|^2_F+\|((X_1)_p-(X_1)_{p+1})C_p\|^2_F.
	\end{align}
	Similarly we find
	\begin{align}\label{D}
	\left\{\begin{array}{ll}
	d_2=\|(X_1)_{p+1}(C_p-C_{p+1})\|^2_F,\\
	d_3=\|(B_p-B_{p+1})D_p\|^2_F,\\
	d_4=\|B_{p+1}(D_p-D_{p+1})\|^2_F.
	\end{array}\right.
	\end{align}
	Combining them together we have the desired result.
	\qquad\end{proof}

\begin{remark}\label{m_p}
From Theorem~\ref{theorem 7} we know that the non-negative sequence $\{m_p\}$ is non-increasing.~Therefore, $\{m_p\}$ has a limit. 
\end{remark}

Theorem~\ref{theorem 7} implies a lot of interesting convergence properties of the algorithm.~For example, we have the following estimates.
\noindent\begin{corollary}\label{corollary 2}
	We have \begin{enumerate}[(i)]
		\item $m_p- m_{p+1}\geq \frac{1}{2}\|B_{p+1}D_{p+1}-B_pD_p\|_F^2$ for all $p.$
		\item $m_p- m_{p+1}\geq\|((X_1)_p-(X_1)_{p+1})\odot W_1\|^2_F$ for all $p$.
	\end{enumerate}
\end{corollary}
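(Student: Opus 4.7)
The plan is to deduce both estimates directly from the identity in Theorem~\ref{theorem 7}, exploiting the fact that all five summands on its right-hand side are non-negative.

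For part (ii), I would simply drop the four summands that do not involve $X_1$. Since
$\|((X_1)_p-(X_1)_{p+1})C_p\|_F^2$, $\|(X_1)_{p+1}(C_p-C_{p+1})\|_F^2$, $\|(B_p-B_{p+1})D_p\|_F^2$, and $\|B_{p+1}(D_p-D_{p+1})\|_F^2$ are all $\ge 0$, Theorem~\ref{theorem 7} immediately yields $m_p-m_{p+1}\ge \|((X_1)_p-(X_1)_{p+1})\odot W_1\|_F^2$. This step is routine; there is no real obstacle.

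For part (i) the key algebraic step is the telescoping decomposition
\begin{equation*}
B_pD_p-B_{p+1}D_{p+1}=(B_p-B_{p+1})D_p+B_{p+1}(D_p-D_{p+1}).
\end{equation*}
Applying the elementary inequality $\|a+b\|_F^2\le 2\|a\|_F^2+2\|b\|_F^2$ (the parallelogram-type bound) to the right-hand side gives
\begin{equation*}
\tfrac{1}{2}\|B_pD_p-B_{p+1}D_{p+1}\|_F^2\le \|(B_p-B_{p+1})D_p\|_F^2+\|B_{p+1}(D_p-D_{p+1})\|_F^2.
\end{equation*}
Now I would invoke Theorem~\ref{theorem 7} and drop the three non-negative summands involving $X_1$ and $C$, concluding that
\begin{equation*}
m_p-m_{p+1}\ge \|(B_p-B_{p+1})D_p\|_F^2+\|B_{p+1}(D_p-D_{p+1})\|_F^2\ge \tfrac{1}{2}\|B_{p+1}D_{p+1}-B_pD_p\|_F^2,
\end{equation*}
which is the desired estimate.

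The only piece of the argument requiring any thought is identifying the right decomposition of $B_pD_p-B_{p+1}D_{p+1}$ so that the parallelogram-type inequality lines up exactly with the two terms produced by Theorem~\ref{theorem 7}. Beyond that, both parts are one-line consequences of what has already been established, so I do not anticipate any genuine obstacle.
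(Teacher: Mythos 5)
Your proposal is correct and follows essentially the same route as the paper: part (ii) by dropping the non-negative summands in the identity of Theorem~\ref{theorem 7}, and part (i) via the decomposition $B_pD_p-B_{p+1}D_{p+1}=(B_p-B_{p+1})D_p+B_{p+1}(D_p-D_{p+1})$ combined with a parallelogram-type bound (the paper states the full parallelogram identity and then discards the extra term, which is equivalent to your inequality $\|a+b\|_F^2\le 2\|a\|_F^2+2\|b\|_F^2$).
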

\noindent \begin{proof}{\it (i).}
	From~(\ref{mj equality}) we can write, for all $p$,
	\begin{align*}
	& m_p-m_{p+1}\\
	&\geq \|(B_p-B_{p+1})D_p\|^2_F+\|B_{p+1}(D_p-D_{p+1})\|^2_F\\
	&=\frac{1}{2}(\|B_{p+1}D_{p+1}-B_pD_p\|^2_F+\|2B_{p+1}D_p-B_{p+1}D_{p+1}-B_pD_p\|^2_F),
	\end{align*}
	by parallelogram identity. Therefore,
	\begin{align*}
	& m_p-m_{p+1}\geq\frac{1}{2}\|B_{p+1}D_{p+1}-B_pD_p\|_F^2.
	\end{align*}
	This completes the proof of (i).\\
	{\it (ii).} This follows immediately from~(\ref{mj equality}).
	\qquad\end{proof}

We now can state our main convergence results as a consequence of Theorem~\ref{theorem 7} and Corollary~\ref{corollary 2}.
\begin{theorem}\label{theorem 8}
	\begin{enumerate}[(i)]
		\item  We have the following:~$\sum_{p=1}^{\infty}\|B_{p+1}D_{p+1}-B_pD_p\|_F^2<\infty$, and\\ $\displaystyle{\sum_{p=1}^{\infty}\left(\|((X_1)_p-(X_1)_{p+1})\odot W_1\|_F^2\right)<\infty}$.
		\item If $\sum_{p=1}^{\infty}\sqrt{m_p- m_{p+1}}< +\infty,$ then $\displaystyle{\lim_{p\to\infty}B_pD_p}$ and $\displaystyle{\lim_{p\to\infty}(X_1)_p}$ exist. Furthermore if we write $L^*:=\displaystyle{\lim_{p\to\infty}B_pD_p}$ then $\displaystyle{\lim_{p\to\infty}B_{p+1}D_{p}=L^*}$ for all $p.$
	\end{enumerate}
\end{theorem}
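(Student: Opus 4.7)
Proof plan for Theorem~\ref{theorem 8}.

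For part (i), the key observation is that the sequence $\{m_p\}$ is non-negative by definition of $F$ and non-increasing by Remark~\ref{m_p}; hence $\sum_{p=1}^{\infty}(m_p-m_{p+1})$ telescopes to $m_1-\lim_{p\to\infty}m_p<\infty$. Both conclusions of (i) now follow by summing the two pointwise inequalities provided by Corollary~\ref{corollary 2}: the inequality $m_p-m_{p+1}\geq\tfrac{1}{2}\|B_{p+1}D_{p+1}-B_pD_p\|_F^2$ gives the first bound, and $m_p-m_{p+1}\geq\|((X_1)_p-(X_1)_{p+1})\odot W_1\|_F^2$ gives the second.

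For part (ii), I first translate the $L^2$-style bounds of Corollary~\ref{corollary 2} into $L^1$-style bounds using the hypothesis $\sum_p\sqrt{m_p-m_{p+1}}<\infty$. Taking square roots,
$$\|B_{p+1}D_{p+1}-B_pD_p\|_F\leq\sqrt{2}\,\sqrt{m_p-m_{p+1}},$$
and, since every entry of $W_1$ is strictly positive so that $w:=\min_{i,j}(W_1)_{ij}>0$,
$$w\,\|(X_1)_{p+1}-(X_1)_p\|_F\leq\|((X_1)_{p+1}-(X_1)_p)\odot W_1\|_F\leq\sqrt{m_p-m_{p+1}}.$$
Summing over $p$, the hypothesis implies $\sum_p\|B_{p+1}D_{p+1}-B_pD_p\|_F<\infty$ and $\sum_p\|(X_1)_{p+1}-(X_1)_p\|_F<\infty$. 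A sequence whose consecutive differences are absolutely summable in a Banach space is Cauchy, so both $\{B_pD_p\}$ and $\{(X_1)_p\}$ converge in Frobenius norm; call $L^*:=\lim_pB_pD_p$.

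To see that $\lim_pB_{p+1}D_p=L^*$ as well, I use the splitting $B_{p+1}D_p=B_{p+1}D_{p+1}+B_{p+1}(D_p-D_{p+1})$. From the identity~(\ref{D}) we have $\|B_{p+1}(D_p-D_{p+1})\|_F^2=d_4\leq m_p-m_{p+1}\to 0$, so the triangle inequality yields $B_{p+1}D_p\to L^*$, completing the proof.

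I do not foresee a substantive obstacle: everything reduces to (a) the telescoping argument for $\{m_p\}$, (b) the elementary observation that $\|M\odot W_1\|_F$ controls $\|M\|_F$ whenever all entries of $W_1$ are positive, and (c) absolute summability implies Cauchy. The only item that requires mild care is the use of the positivity of $W_1$ to recover control of $\|(X_1)_{p+1}-(X_1)_p\|_F$ from the weighted version, which is legitimate since the theorem's hypothesis fixes $(W_1)_{ij}>0$.
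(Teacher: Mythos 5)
Your proposal is correct and follows essentially the same route as the paper: telescoping the non-increasing sequence $\{m_p\}$ against the inequalities of Corollary~\ref{corollary 2} for part (i), and converting to summable Frobenius-norm increments (using the positive lower bound on the entries of $W_1$) to get Cauchy sequences for part (ii), with the final step $B_{p+1}D_p\to L^*$ handled via the vanishing of $\|B_{p+1}(D_p-D_{p+1})\|_F$ exactly as in the paper. No gaps.
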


\noindent\begin{proof}{\it (i).} From Corollary~\ref{corollary 2}~(i)
	we can write, for $N>0$,
	$$
	2(m_1-m_{N+1})\ge\sum_{p=1}^{N}\left(\|B_{p+1}D_{p+1}-B_pD_p\|_F^2\right),$$
and from Corollary~\ref{corollary 2}~(ii) and using $\lambda=\displaystyle{\min_{\substack{1\le i\le m\\1\le j\le k}}(W_1)_{ij}}$ we have
$$m_1-m_{N+1}\ge\displaystyle{\sum_{p=1}^{N}\left(\|((X_1)_p-(X_1)_{p+1})\odot W_1\|_F^2\right)}
	\ge\displaystyle{\lambda^2\sum_{p=1}^{N}\|(X_1)_p-(X_1)_{p+1}\|_F^2}.$$
Now note that, by Remark~\ref{m_p}, $\{m_p\}_{p=1}^{\infty}$ is a convergent sequence. Hence the result follows.\\
	\noindent {\it (ii).} Again using Corollary~\ref{corollary 2} (i) we can write, for $N>0$,
	\begin{align*}
	&\sum_{p=1}^{N}\sqrt{m_p- m_{p+1}}
	\ge \frac{1}{\sqrt{2}}\sum_{p=1}^{N}\left(\|B_{p+1}D_{p+1}-B_pD_p\|_F\right),
	\end{align*}
	which implies $\sum_{p=1}^{\infty}(B_{p+1}D_{p+1}-B_pD_p)$ is convergent if $\sum_{p=1}^{\infty}\sqrt{m_p- m_{p+1}}< +\infty.$ Therefore, $\displaystyle{\lim_{N\to\infty}B_ND_N}$ exists. Similarly, we can conclude that $\displaystyle{\lim_{p\to\infty}(X_1)_p}$ exists.\\
	Further, $\displaystyle{\lim_{p\to\infty}\|B_{p+1}D_{p+1}}-B_{p+1}D_p\|_F^2=0,$ as implied by~(i) above. 
	Therefore $\displaystyle{\lim_{p\to\infty}B_{p+1}D_p}$ exists and is equal to $\displaystyle{\lim_{p\to\infty}B_pD_p}=L^*.$ 
	This completes the proof. 
	\qquad\end{proof}

From Corollary~\ref{theorem 8}, we can only prove the convergence of the sequence $\{B_pD_p\}$ but not of $\{B_p\}$ and $\{D_p\}$ separately. We next establish the convergence of $\{B_p\}$ and $\{D_p\}$ with stronger assumption. Consider the situation when
\begin{align}\label{mj inequality}
\sum_{p=1}^{\infty}\sqrt{m_p- m_{p+1}}< +\infty.
\end{align}
\begin{theorem} \label{theorem 9}
	Assume~(\ref{mj inequality}) holds.
	\begin{enumerate}[(i)]
		\item If $B_p$ is of full rank and $B_p^TB_p\ge\gamma I_{r-k}$ for large $p$ and some $\gamma>0$ then $\displaystyle{\lim_{p\to\infty}D_p}$ exists.
		\item If $D_p$ is of full rank and $D_pD_p^T\ge\delta I_{r-k}$ for large $p$ and some $\delta>0$  then $\displaystyle{\lim_{p\to\infty}B_p}$ exists.
		\item If $X_1^*:=\displaystyle{\lim_{p\to\infty}(X_1)_{p}}$ is of full rank, then $C^*:=\displaystyle{\lim_{p\to\infty}C_p}$ exists. Furthermore, if we write $L^*=B^*D^*,$ for $B^*\in\mathbb{R}^{m\times(r-k)}, D^*\in\mathbb{R}^{(r-k)\times(n-k)}$, then $({X}_1^*,C^*,B^*,D^*)$ will be a stationary point of $F$.
	\end{enumerate}
\end{theorem}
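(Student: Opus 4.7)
The plan is to exploit the key identity from Theorem~\ref{theorem 7}, which expresses $m_p-m_{p+1}$ as a sum of squared Frobenius norms of consecutive differences. Each summand is thus bounded above by $m_p-m_{p+1}$, and the square root assumption~(\ref{mj inequality}) will promote these telescoping norms into summability of consecutive differences, which gives Cauchy convergence. The parts (i), (ii), (iii) differ only in which summand we isolate and which lower-bound eigenvalue estimate we use.

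For (i), I would isolate the term $\|B_{p+1}(D_p-D_{p+1})\|_F^2$ from~(\ref{mj equality}), so that $\|B_{p+1}(D_p-D_{p+1})\|_F\le \sqrt{m_p-m_{p+1}}$. Since the hypothesis $B_p^TB_p\ge \gamma I_{r-k}$ also applies for $p+1$ when $p$ is large, the identity $\|B_{p+1}(D_p-D_{p+1})\|_F^2 = \mathrm{tr}\bigl((D_p-D_{p+1})^TB_{p+1}^TB_{p+1}(D_p-D_{p+1})\bigr)\ge \gamma\|D_p-D_{p+1}\|_F^2$ yields $\sqrt{\gamma}\,\|D_p-D_{p+1}\|_F\le \sqrt{m_p-m_{p+1}}$. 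Summing over $p$ and invoking~(\ref{mj inequality}) shows $\sum_p\|D_p-D_{p+1}\|_F<\infty$, so $\{D_p\}$ is Cauchy and $\lim_{p\to\infty}D_p$ exists. For (ii), the argument is symmetric: isolate $\|(B_p-B_{p+1})D_p\|_F^2$ and use $D_pD_p^T\ge \delta I_{r-k}$ to deduce $\sqrt{\delta}\,\|B_p-B_{p+1}\|_F\le \sqrt{m_p-m_{p+1}}$, giving the Cauchy property of $\{B_p\}$.

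For (iii), I would isolate $\|(X_1)_{p+1}(C_p-C_{p+1})\|_F^2$. Since $X_1^*$ has full rank, $(X_1^*)^TX_1^*$ is positive definite; let $\eta>0$ be its smallest eigenvalue. By continuity of eigenvalues and $(X_1)_{p+1}\to X_1^*$, we get $(X_1)_{p+1}^T(X_1)_{p+1}\ge (\eta/2)I_k$ for large $p$. The same trace estimate as above then gives $\sqrt{\eta/2}\,\|C_p-C_{p+1}\|_F\le \sqrt{m_p-m_{p+1}}$, hence $C^*=\lim C_p$ exists. For the stationarity claim, with (i) and (ii) in force we have $B^*=\lim B_p$ and $D^*=\lim D_p$, so $B^*D^*=\lim B_pD_p=L^*$ holds automatically. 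It then remains to pass to the limit in the four per-iteration optimality identities~(\ref{x_p}), (\ref{C_p}), (\ref{B_p}), (\ref{D_p}), which are precisely the vanishing of the partial gradients of $F$ at the updated variables. Continuity of matrix multiplication/Hadamard product delivers the four stationarity conditions $\nabla_{X_1}F=\nabla_C F=\nabla_B F=\nabla_D F=0$ at $(X_1^*,C^*,B^*,D^*)$.

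The main obstacle is a conceptual one rather than technical: strictly speaking, part (iii) of the theorem only names $B^*,D^*$ through the factorization $L^*=B^*D^*$, so the stationarity statement requires the existence of the individual limits of $B_p$ and $D_p$, which in turn relies on the full-rank hypotheses of (i) and (ii). Once this compatibility is clarified, every step is a controlled passage to the limit; the only delicate part is confirming that the eigenvalue lower bound needed in (iii) really holds \emph{eventually} (not uniformly from the start), which follows from the full-rank assumption on $X_1^*$ and continuity.
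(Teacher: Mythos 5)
Your proposal is correct and follows essentially the same route as the paper: isolate the relevant summand in the identity of Theorem~\ref{theorem 7}, use the trace/eigenvalue lower bounds ($B_{p+1}^TB_{p+1}\ge\gamma I$, $D_pD_p^T\ge\delta I$, $(X_1)_{p+1}^T(X_1)_{p+1}\ge\eta I_k$ eventually) to convert them into summable consecutive differences via~(\ref{mj inequality}), and then pass to the limit in the optimality equations~(\ref{x_p})--(\ref{D_p}) to get stationarity. Your remark that the stationarity claim in (iii) implicitly needs the individual limits $B^*=\lim B_p$ and $D^*=\lim D_p$ (hence the hypotheses of (i) and (ii)) is a fair reading of a point the paper glosses over.
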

\noindent \begin{proof}{\it (i).} Using~(\ref{mj equality}) we have, for $N>0$,
	\begin{align*}
	\sum_{p=1}^{N}\sqrt{m_p-m_{p+1}}&\geq \sum_{p=1}^{N}\|B_{p+1}(D_p-D_{p+1})\|_F\\
	&=\sum_{p=1}^{N }\sqrt{{\rm tr}[(D_p-D_{p+1})^TB_{p+1}^TB_{p+1}(D_p-D_{p+1})]},
	\end{align*}
	where ${\rm tr}(X)$ denotes the trace of the matrix $X$.
	Note that $B_p^TB_p\ge\gamma I_{r-k},$ and so we obtain
	\begin{align*}
	\sum_{p=1}^{N}\sqrt{m_p-m_{p+1}}\geq\sqrt{\gamma}\sum_{p=1}^{N}\|D_p-D_{p+1}\|_F,
	\end{align*}
	which implies $\sum_{p=1}^{\infty}(D_p-D_{p+1})$ is convergent if~(\ref{mj inequality}) holds. Therefore $\displaystyle{\lim_{N\to\infty}D_N}$ exists.
	Similarly we can prove $(ii)$.\\
	\noindent {\it (iii).} Note that, from~(\ref{mj equality}) we have, for $N>0$,
	\begin{align*}
	\sum_{p=1}^{N}\sqrt{m_p-m_{p+1}}\geq& \sum_{p=1}^{N}\|(X_1)_{p+1}(C_p-C_{p+1})\|_F\\
	=&\sum_{p=1}^{N}\sqrt{{\rm tr}[(C_p-C_{p+1})^T(X_1)_{p+1}^T(X_1)_{p+1}(C_p-C_{p+1})]}.
	\end{align*}
	If $X_1^*:=\displaystyle{\lim_{p\to\infty}(X_1)_{p}}$ is of full rank, it follows that, for large $p$, $(X_1)_{p+1}^T(X_1)_{p+1}\geq\eta I_k,$ for some $\eta> 0.$ Therefore, we have
	\begin{align*}
	&\sum_{p=1}^{N}\sqrt{m_p-m_{p+1}}\geq\sqrt{\eta}\sum_{p=1}^{N}\|C_p-C_{p+1}\|_F.
	\end{align*}
	Following the same argument as in the previous proof we can conclude $\displaystyle{\lim_{p\to\infty}C_p}=C^*$ exists if ~(\ref{mj inequality}) holds. Recall from~(\ref{x_p}-\ref{D_p}), we have,
	\begin{align*}\label{stationary point}
	\left\{\begin{array}{ll}
	&((X_1)_{p+1}-A_1)\odot W_1\odot W_1-(A_2-(X_1)_{p+1}C_p-B_pD_p)C_p^T=0,\\
	&(X_1)_{p+1}^T(A_2-(X_1)_{p+1}C_{p+1}-B_pD_p)=0,\\
	&(A_2-(X_1)_{p+1}C_{p+1}-B_{p+1}D_p)D_p^T=0,\\
	& B_{p+1}^T(A_2-(X_1)_{p+1}C_{p+1}-B_{p+1}D_{p+1})=0.
	\end{array}\right.
	\end{align*}
	Taking limit as $p\to\infty$ in above we have
	\begin{align*}
	\left\{\begin{array}{ll}
	\frac{\partial}{\partial X_1}F({X}_1^*,C^*,B^*,D^*)&=(X_1^*-A_1)\odot W_1\odot W_1+(B^*D^*+X_1^*C^*-A_2){C^*}^{T}=0,\\
	\frac{\partial}{\partial C}F({X}_1^*,C^*,B^*,D^*)&={{X}_1^*}^T(A_2-{X}_1^*{C}^*-B^*D^*)=0,\\
	\frac{\partial}{\partial B}F({X}_1^*,C^*,B^*,D^*)&=(A_2-{X}_1^*C^*-B^*D^*){D^*}^T=0,\\
	\frac{\partial}{\partial D}F({X}_1^*,C^*,B^*,D^*)&={B^*}^T(A_2-{X}_1^*C^*-B^*D^*)=0.
	\end{array}\right.
	\end{align*}
	Therefore, $({X}_1^*,C^*,B^*,D^*)$ is a stationary point of $F$.~This completes the proof.
	\qquad\end{proof}

\section{\bf Numerical results}
In this section we will demonstrate numerical results of our weighted rank constrained algorithm and show the convergence to the solution given by Golub,~Hoffman,~and Stewart when $W_1\to\infty$ as predicted by our theorems in Section 2. All experiments were performed on a computer with 3.1 GHz Intel Core i7-4770S processor and 8GB memory.

\subsection{\bf Experimental setup}
To perform our numerical simulations we construct two different variety of test matrix $A$. The first series of experiments were performed to demonstrate the convergence of the algorithm proposed in Section 4 and to validate the analytical result in Theorem~\ref{theorem 3}.~To this end, we performed our experiments on three full rank synthetic matrices $A$ of size $300\times300$, $500\times500$, and $700\times700$ respectively. We constructed $A$ as low rank matrix plus Gaussian noise such that $A=A_0+\alpha*E_0$, where $A_0$ is the low-rank matrix,~$E_0$ is the noise matrix, and $\alpha$ controls the noise level. We generate $A_0$ as a product of two independent full-rank matrices of size $m\times r$ whose elements are independent and identically distributed~(i.i.d.) $\mathcal{N}(0,1)$ random variables such that ${\rm r}(A_0)=r$. We generate $E_0$ as a noise matrix whose elements are i.i.d. $\mathcal{N}(0,1)$ random variables as well. In our experiments we choose $\displaystyle{\alpha=0.2\max_{i,j}(A_{ij})}$. The true rank of the test matrices are 10\% of their original size but after adding noise they become full rank.

To compare the performance of our algorithm with the existing weighted low-rank approximation algorithms, we are interested in which $A$ has a known singular value distribution. To address this, we construct $A$ of size $50\times 50$ such that ${\rm r}(A)=30$. Note that, $A$ has first 20 singular values distinct, and last~10 singular values repeated.~It is natural to consider the cases where $A$ has large and small condition number.~That is,~we demonstrate the performance of WLR in two different cases:~condition number $\kappa(A)$ of $A$:~(i)~small~and~(ii)~large, where $\kappa(A)=\frac{\sigma_{max}}{\sigma_{min}}.$

\subsection{\bf Implementation details}
Let $A_{WLR}=(X_1^*\;\; X_1^*C^*+B^*D^*)$ where $(X_1^*, C^*$, $B^*, D^*)$ be a solution to~(\ref{main problem 2}). We denote $(A_{WLR})_p$ as our approximation to $A_{WLR}$ at $p$th iteration. Recall that $(A_{WLR})_p=((X_1)_{p}\;\; (X_1)_{p}C_p+B_pD_p).$ We denote $\|(A_{WLR})_{p+1}-(A_{WLR})_{p}\|_F=Error_p$ and use $\frac{Error_p}{\|(A_{WLR})_{p}\|_F}$ as a measure of the relative error. For a threshold $\epsilon>0$ the stopping criteria of our algorithm at the $p$th iteration is  $Error_p<\epsilon$ or $\frac{Error_p}{\|(A_{WLR})_{p}\|_F}<\epsilon$ or if it reaches the maximum iteration.~The algorithm performs the best when we initialize $X_1$ and $D$ as random normal matrices and $B$ and $C$ as zero matrices.~Throughout this section we set $r$ as the target low rank and $k$ as the total number of columns we want to constrain in the observation matrix.~The algorithm takes approximately $35.9973$ seconds on an average to perform 2000 iterations on a $300\times 300$ matrix for fixed $r, k$, and $\lambda$.

\subsection{\bf Experimental results on algorithm in section 4.1}
We first verify our implementation of the algorithm for computing $A_{WLR}$ for fixed weights.~Throughout this subsection we set the target low-rank $r$ as the true rank of the test matrix and $k=0.5r$. To obtain the accurate result we run every experiment 25 times with random initialization and plot the average outcome in each case.
A threshold equal to $2.2204\times10^{-16}$ (``machine $\epsilon$'') is set for the experiments in this subsection.
For Figure 5.1, we consider a nonuniform weight with entries in $W_1$ randomly chosen from the interval $[\lambda,\zeta]$, where $\min_{\substack{1\le i\le m\\1\le j\le k}}(W_1)_{ij}=\lambda$ and $\max_{\substack{1\le i\le m\\1\le j\le k}}(W_1)_{ij}=\zeta$ in the first block $W_1$ and $W_2=\mathbbm{1}$ and plot iterations versus relative error.~Relative error is plotted in logarithmic scale along $Y$-axis.
\begin{figure}[htpb]
	\begin{subfigure}[c]{.5\textwidth}
		\centering
		\includegraphics[width=\textwidth]{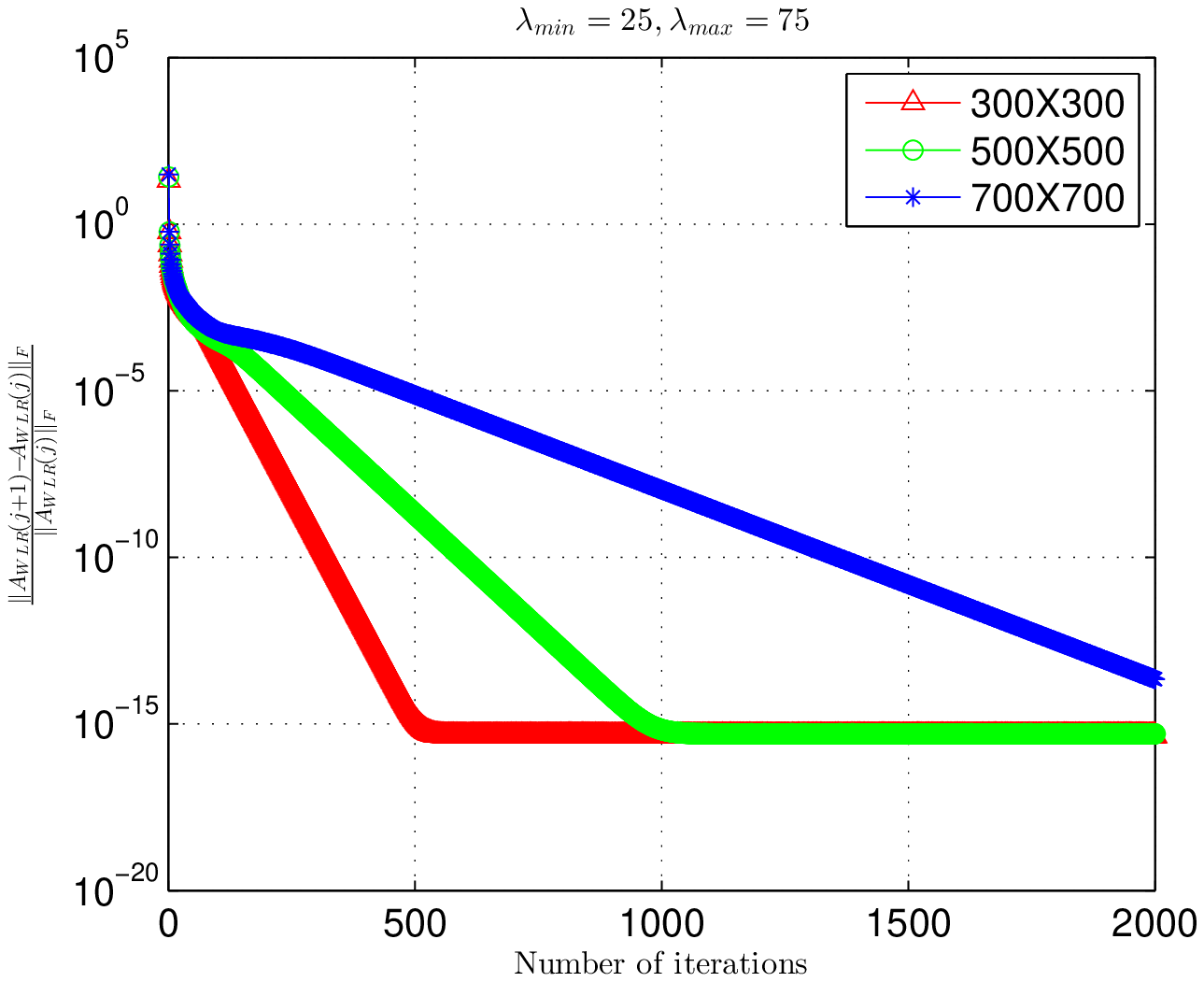}
		\caption{}
	\end{subfigure}
	\begin{subfigure}[c]{.5\textwidth}
		\centering
		\includegraphics[width=\textwidth]{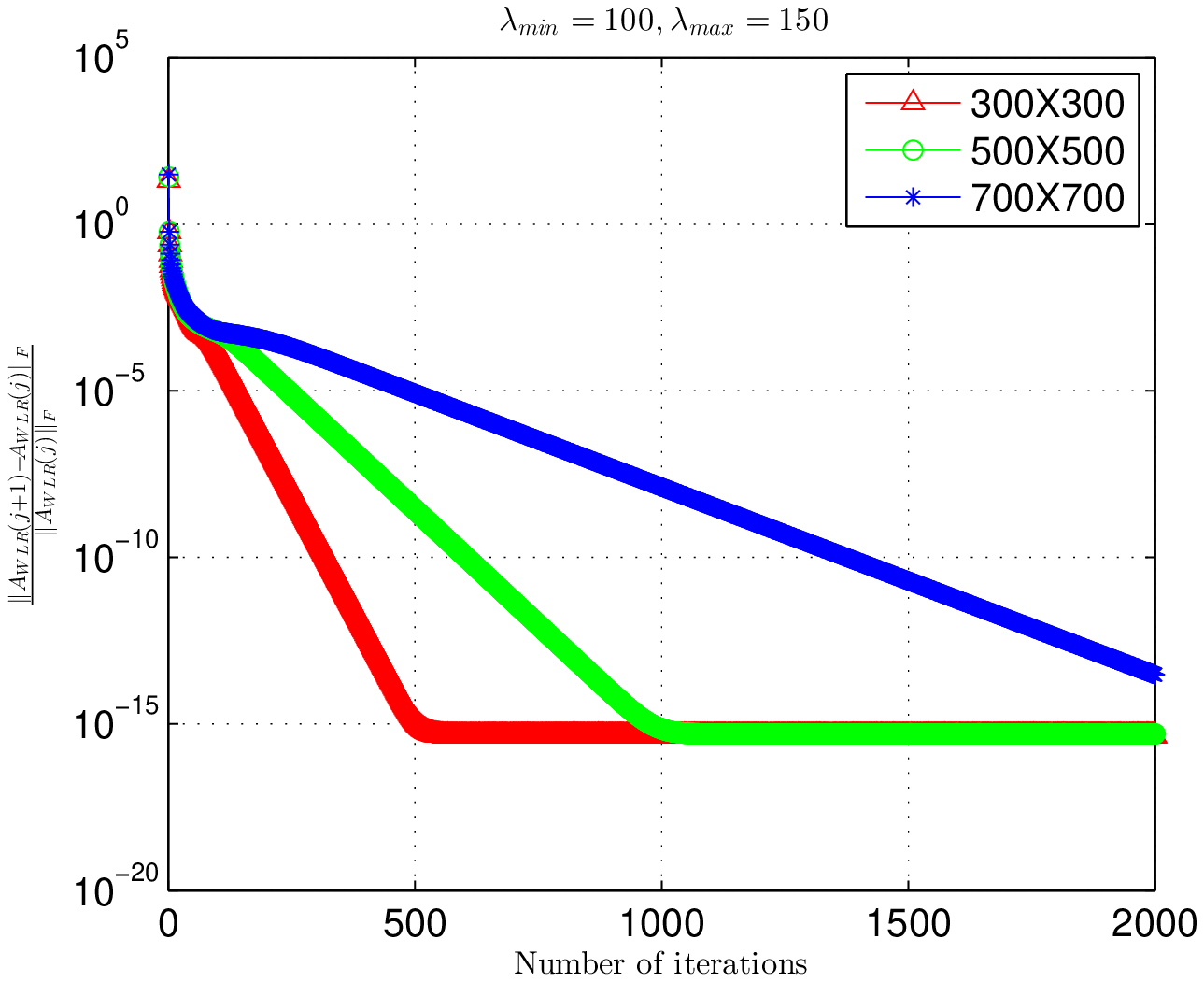}
		\caption{}
	\end{subfigure}
	\caption{Iterations vs Relative error:~(a)~$\lambda=25,\zeta=75$;~(b)~$\lambda=100,\zeta=150$.}
\end{figure}
\begin{figure}[htpb]
	\begin{subfigure}[c]{.5\textwidth}
		\centering
		\includegraphics[width=\textwidth]{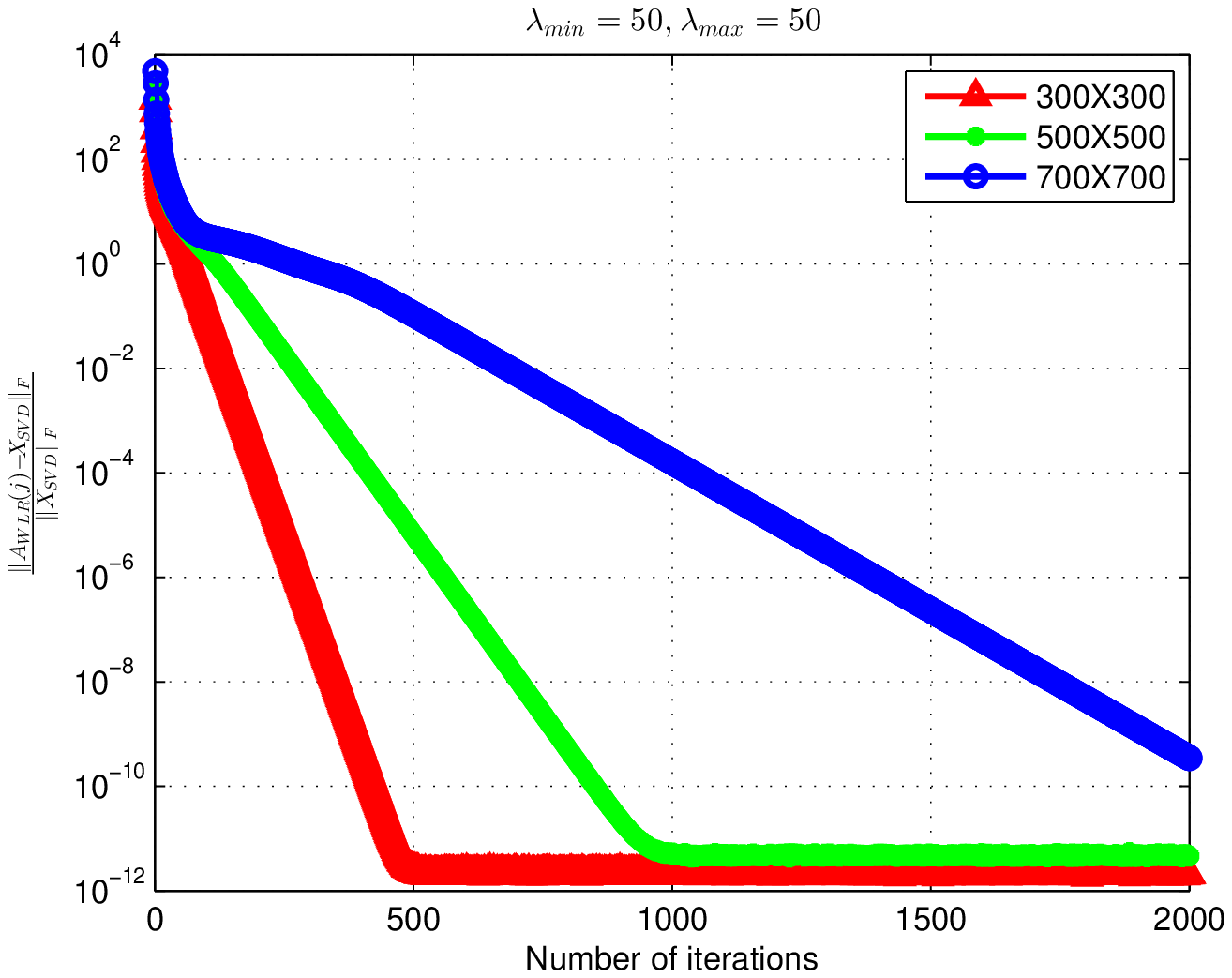}
		\caption{}
	\end{subfigure}
	\begin{subfigure}[c]{.5\textwidth}
		\centering
		\includegraphics[width=\textwidth]{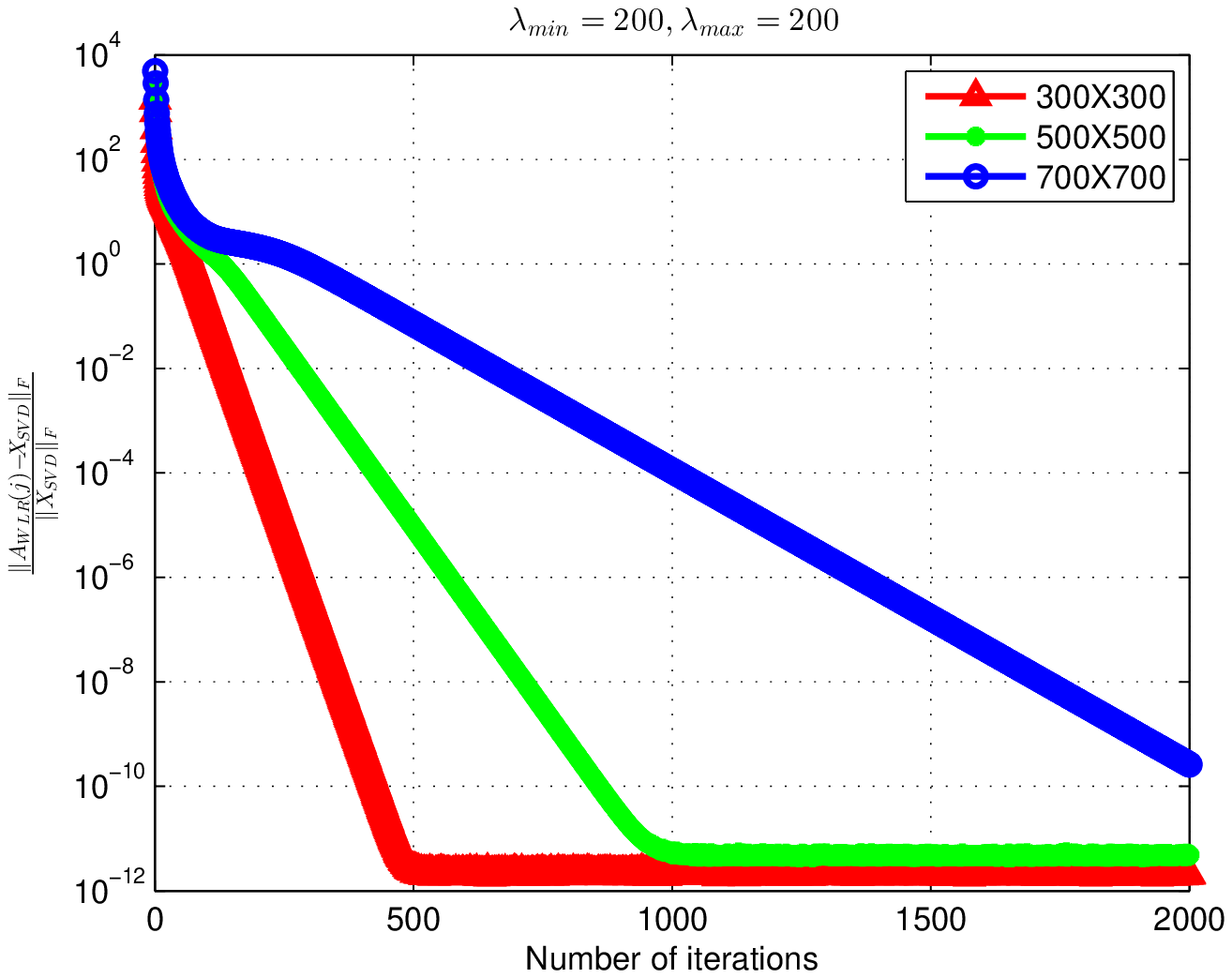}
		\caption{}
	\end{subfigure}
	\caption{Iterations vs $\frac{\|A_{WLR}(j)-X_{SVD}\|_F}{\|X_{SVD}\|_F}$:~(a)~$\lambda = 50$;~(b)~$\lambda = 200.$}
\end{figure}
Next, we consider a uniform weight in the first block $W_1$ and $W_2=\mathbbm{1}$. Recall that,~in this case the solution to problem~(\ref{hadamard problem}) can be given in closed form. That is, when $W_1=\lambda\mathbbm{1}$,~the rank $r$ solutions to~(\ref{hadamard problem}) are $X_{SVD}=[\frac{1}{\lambda}\tilde{X}_1\;\;\tilde{X}_2]$, where $[\tilde{X}_1\;\;\tilde{X}_2]$ is obtained in closed form using a SVD of $[\lambda A_1\;\;A_2]$. In Figure 5.2, we plot iterations versus $\frac{\|A_{WLR}(j)-X_{SVD}\|_F}{\|X_{SVD}\|_F}$ in logarithmic scale.
From Figures 5.1 and 5.2 it is clear that the algorithm in Section 4.1 converges. Even for the bigger size matrices the iteration count is not very high to achieve the convergence.

\subsection{Numerical results supporting Theorem~\ref{theorem 3}}		
We now demonstrate numerically the rate of convergence as stated in Theorem 2.1 when the block of weights in $W_1$ goes to $\infty$ and $W_2=\mathbbm{1}$. First we use an uniform weight $W_1=\lambda\mathbbm{1}$ and $W_2=\mathbbm{1}$. The algorithm in Section 4 is used to compute $A_{WLR}$ and SVD is used for calculating $A_G$, the solution to~(\ref{golub's problem}) when $A=(A_1\;\;A_2)$. We plot $\lambda$ vs. $\lambda\|A_G-A_{WLR}\|_F$ where $\lambda\|A_G-A_{WLR}\|_F$ is plotted in logarithmic scale along $Y$-axis. We run our algorithm 20 times with the same initialization and plot the average outcome. A threshold equal to $10^{-7}$ is set for the experiments in this subsection.~For Figure 5.3 we set $\lambda=[1:50:1000]$.
\begin{figure}[htpb]
	\centering
	\begin{subfigure}[c]{.495\textwidth}
		\centering
		\includegraphics[width=\textwidth]{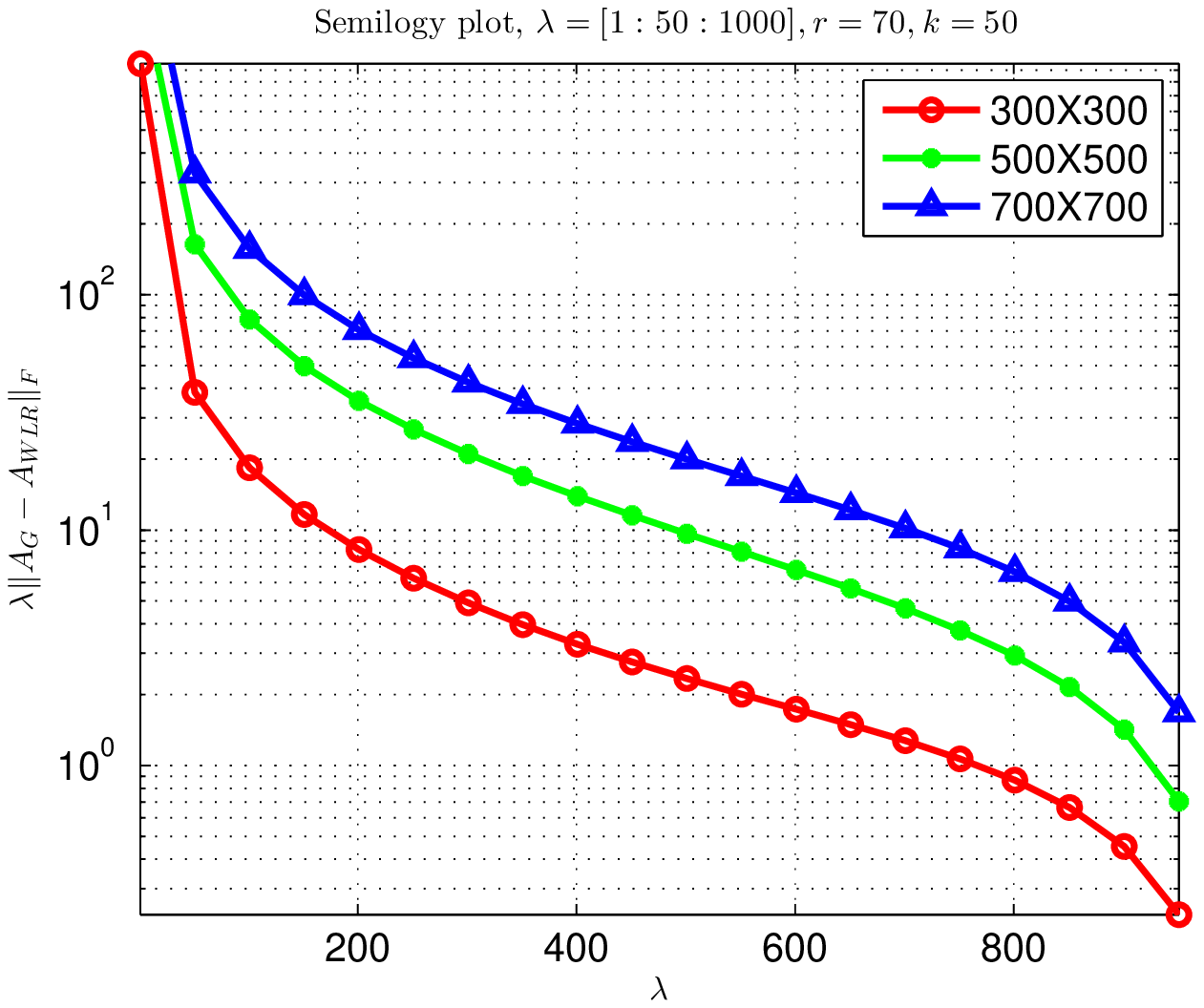}
		\caption{}
	\end{subfigure}
	\begin{subfigure}[c]{.495\textwidth}
		\centering
		\includegraphics[width=\textwidth]{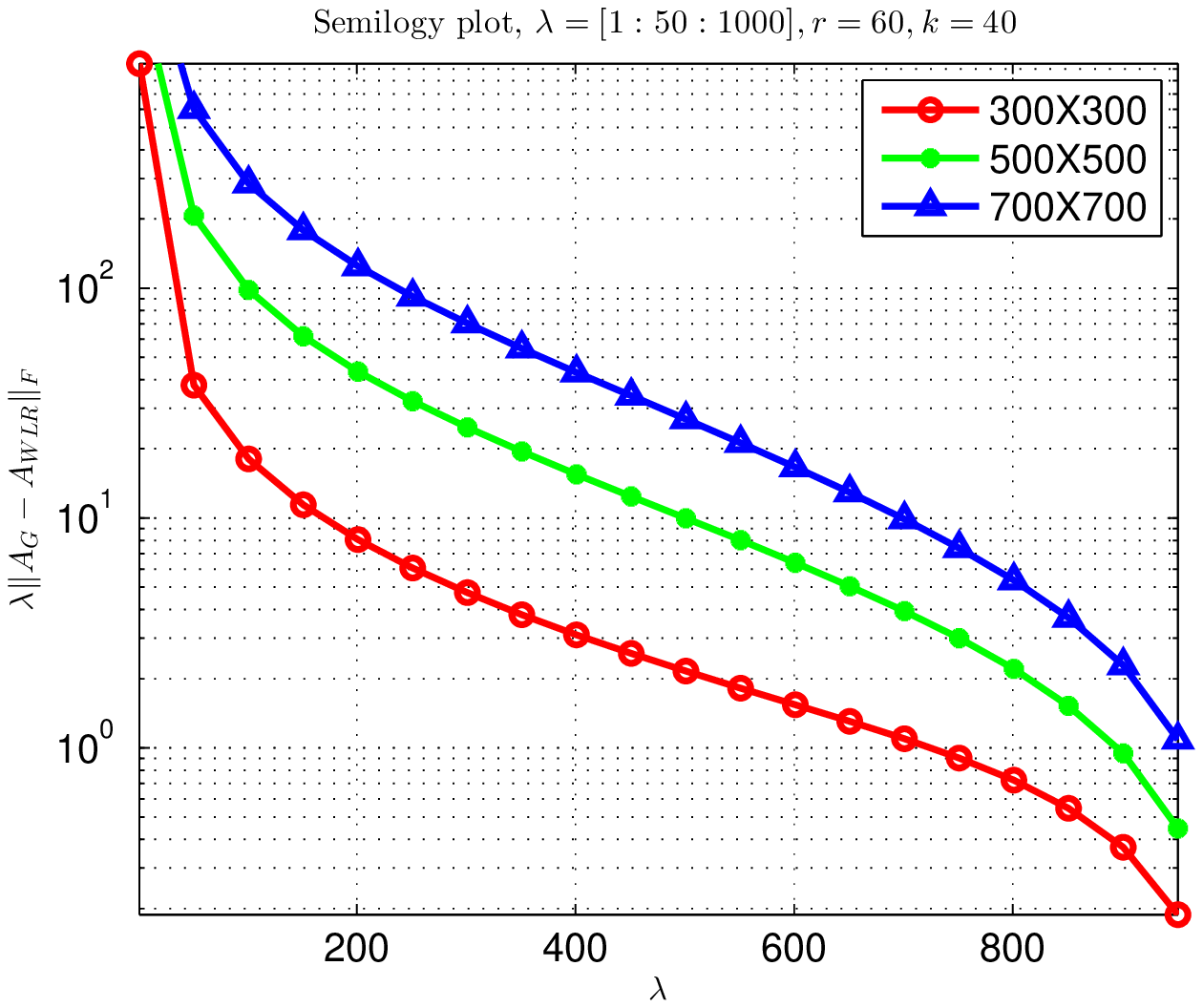}
		\caption{}
	\end{subfigure}
	\caption{$\lambda$ vs. $\lambda\|A_G-A_{WLR}\|_F$:~(a)~$(r,k)=(70,50)$,~(b)~$(r,k)=(60,40)$.}
\end{figure}
\begin{figure}[htbp]
	\centering
	\begin{subfigure}[c]{.495\textwidth}
		\centering
		\includegraphics[width=\textwidth]{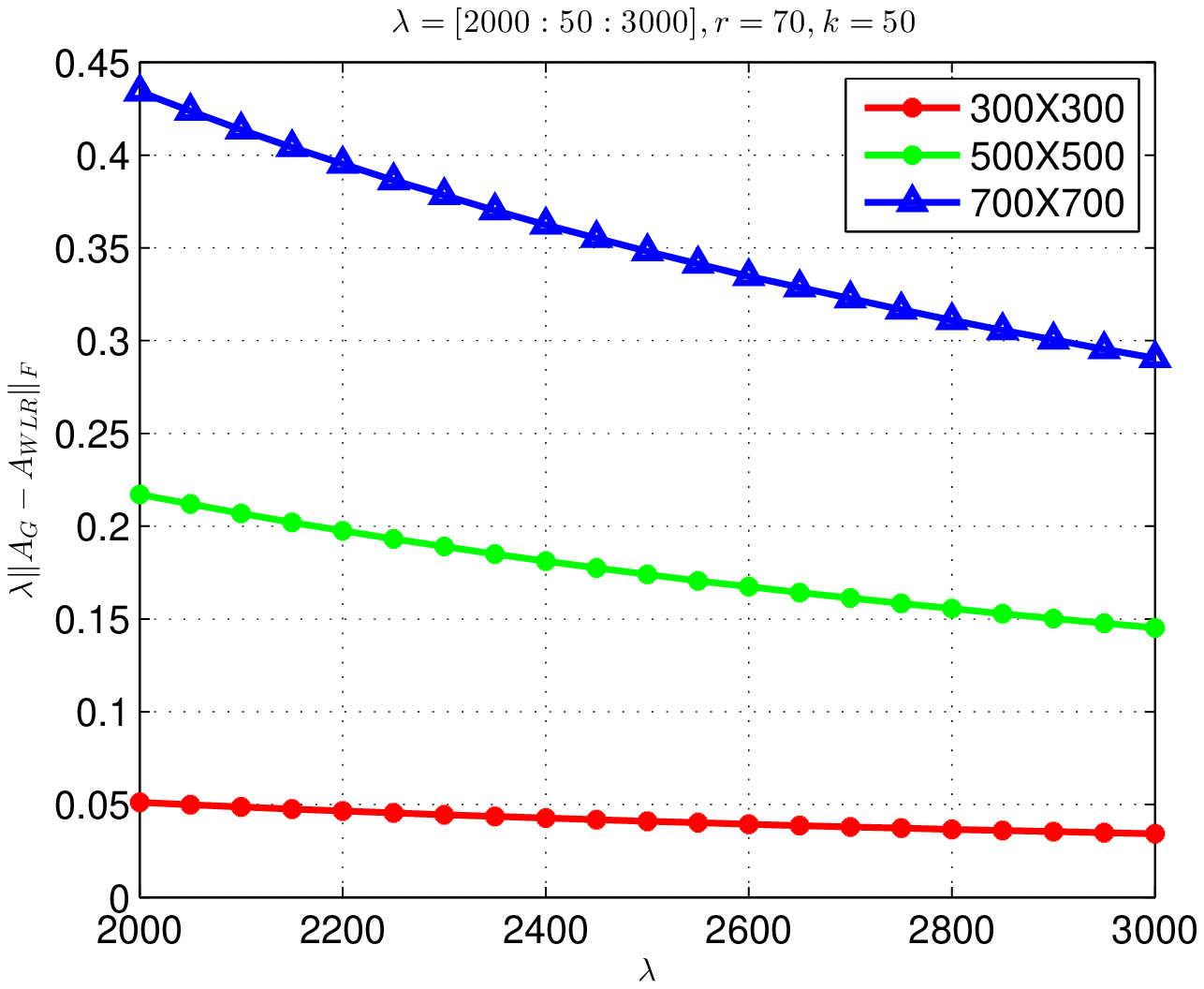}
		\caption{}
	\end{subfigure}
	\begin{subfigure}[c]{.495\textwidth}
		\centering
		\includegraphics[width=\textwidth]{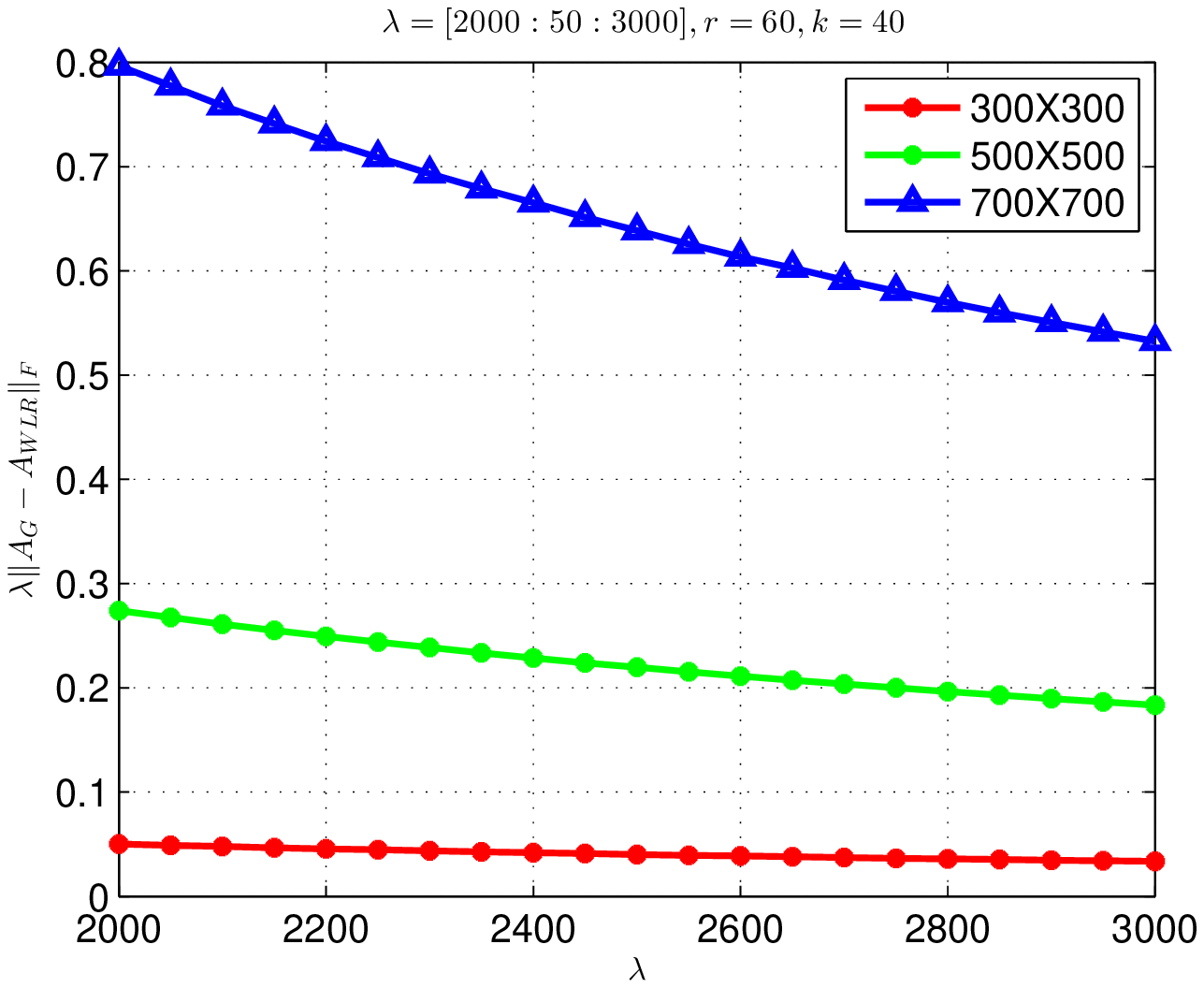}
		\caption{}
	\end{subfigure}
	\caption{$\lambda$ vs. $\lambda\|A_G-A_{WLR}\|_F$:~(a)~$(r,k)=(70,50)$,~(b)~$(r,k)=(60,40)$.}
\end{figure}
The plots indicate for an uniform $\lambda$ in $W_1$ the convergence rate is at least $O(\frac{1}{\lambda}), \lambda\to\infty.$  Next we consider a nonuniform weight in the first block $W_1$ and $W_2=\mathbbm{1}$.~We consider~$(W_1)_{ij}\in[2000,2020], [2050,2070]$ and so on. For Figure 5.4, $\lambda\|A_G-A_{WLR}\|_F$~(recall $\min_{\substack{1\le i\le m\\1\le j\le k}}(W_1)_{ij}=\lambda$) is plotted in regular scale along $Y$-axis. The curves in figure~5.4 are not always strictly decreasing but it is encouraging to see that they stay bounded. Figures~5.3~and~5.4~provide numerical evidence in supporting Theorem~\ref{theorem 3}. As established in Theorem~\ref{theorem 3} the above plots demonstrate the convergence rate is at least $O(\frac{1}{\lambda}), \lambda\to\infty.$

\subsection{Comparison with state-of-the-art general weighted algorithms}
In this section, we compare the performance of our special weighted algorithm on synthetic data with the standard weighted total alternating least squares~(WTALS) method proposed in~\cite{markovosky,manton} and the expectation maximization~(EM) method proposed by~Srebro and Jaakkola~\cite{srebro}.~The existing algorithms are for general weighted case but for our purpose we consider partial weighting in them.~Additionally,~we compare the performance of our algorithm with the standard alternating least squares,~WTALS,~and the EM method~\cite{srebro,markovosky} for $k=0$ case.
For the numerical experiments in this section, we are interested to see how the distribution of the singular values affects the performance of our algorithm compare to other state-of-the-art algorithms.

\subsubsection{Performance compare to other weighted low-rank approximation algorithms}
\begin{figure}[htpb]
	\centering
	\begin{subfigure}[c]{.495\textwidth}
		\centering
		\includegraphics[width=\textwidth]{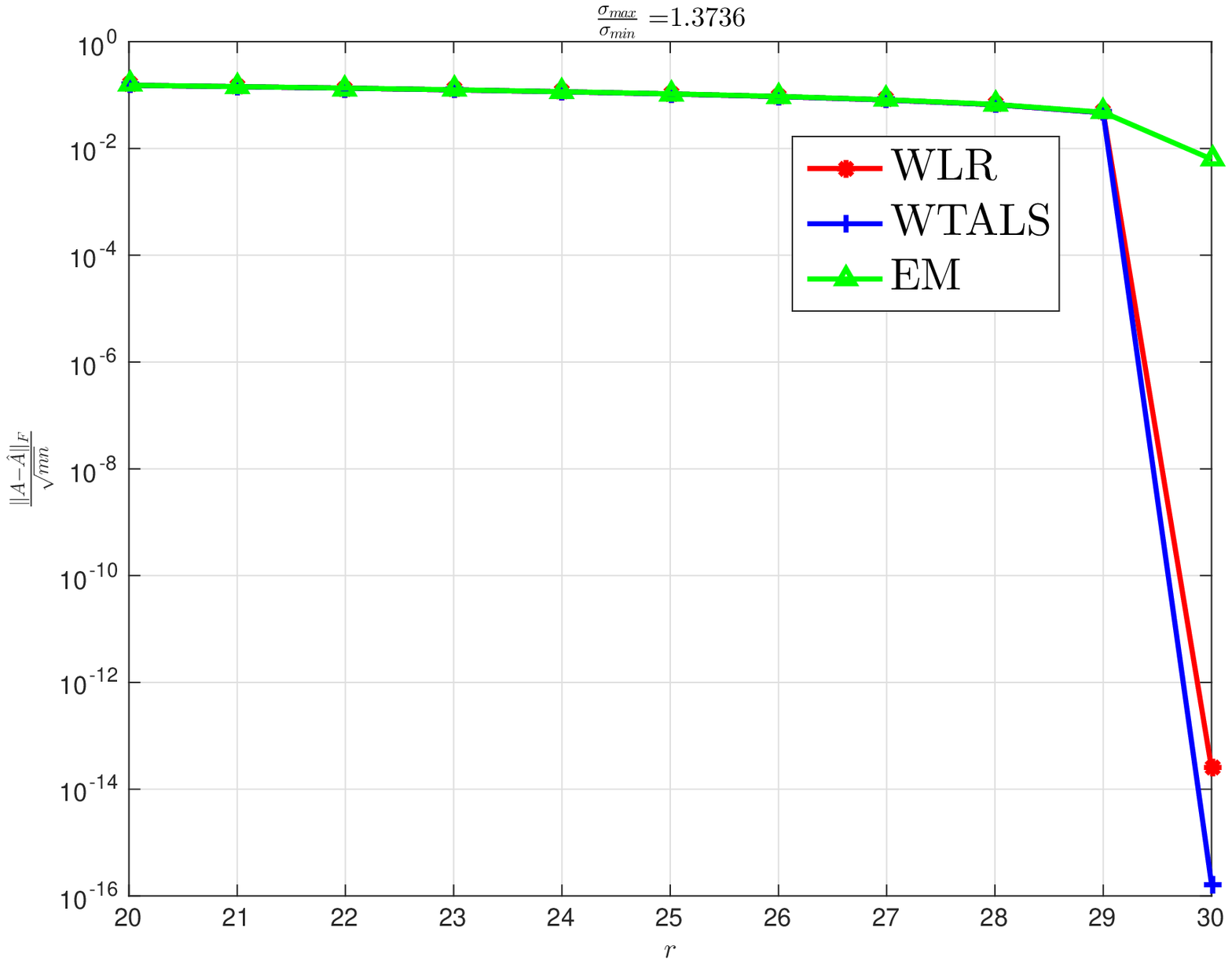}
		\caption{}
	\end{subfigure}
	\begin{subfigure}[c]{.495\textwidth}
		\centering
		\includegraphics[width=\textwidth]{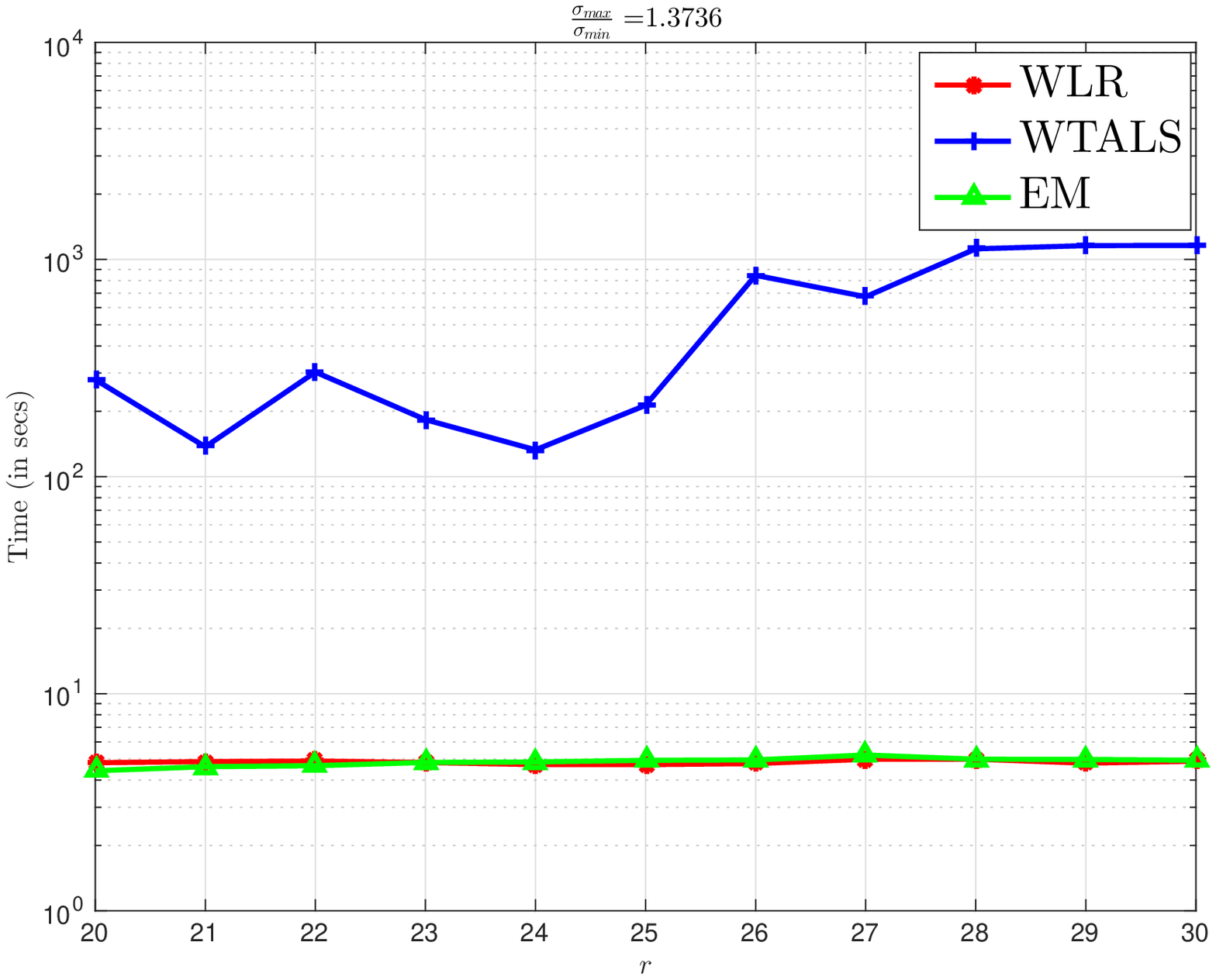}
		\caption{}
	\end{subfigure}
	\caption{Comparison of WLR with other methods when $\kappa(A)$ small:~(a)~$r$ versus RMSE,~(b)~$r$ versus time. We have $\kappa(A)=1.3736$, $r=[20:1:30]$, $k=10$, and $(W_1)_{ij}\in[50,1000]$.}
\end{figure}

The weights in the first block are chosen randomly from a large interval. We set $(W_1)_{ij}\in[50,1000]$ and $W_2=\mathbbm{1}$.~For WTALS, as specified in the software package, we consider \verb+ max_iter = 1000, threshold = 1e-10+~\cite{markovosky}.~For EM,~we choose~\verb+ max_iter = 5000, threshold = 1e-10+,~and for WLR, we set \verb+ max_iter+ \verb+= 2500, threshold = 1e-16+. For the performance measure, we use the standard root mean square error~(RMSE) which is $\|A-\hat{A}\|_F/\sqrt{mn}$, where $\hat{A}\in\mathbb{R}^{m\times n}$ is the low-rank approximation of $A$ obtained by using different weighted low-rank approximation algorithm.~The \verb+MATLAB+ code for the EM method is written by the authors following the algorithm proposed in~\cite{srebro}.~For computational time of WLR and EM, the authors do not claim the optimized performance of their codes.~However, the initialization of $X$ plays a crucial role in promoting convergence of the EM method to a global, or a local minimum, as well as the speed with which convergence is attained.
\begin{figure}[H]
	\centering
	\begin{subfigure}[c]{.495\textwidth}
		\centering
		\includegraphics[width=\textwidth]{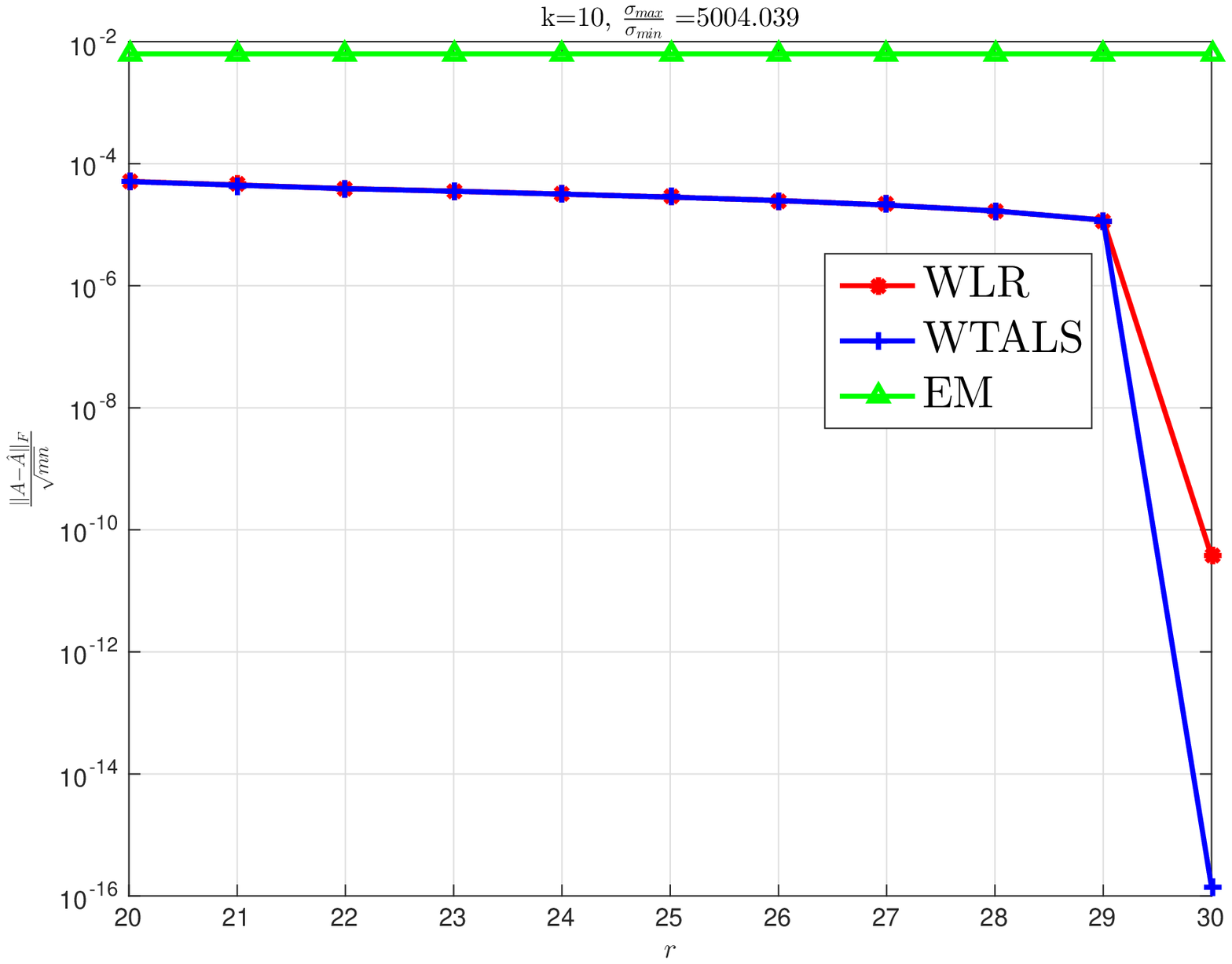}
		\caption{}
	\end{subfigure}
	\begin{subfigure}[c]{.495\textwidth}
		\centering
		\includegraphics[width=\textwidth]{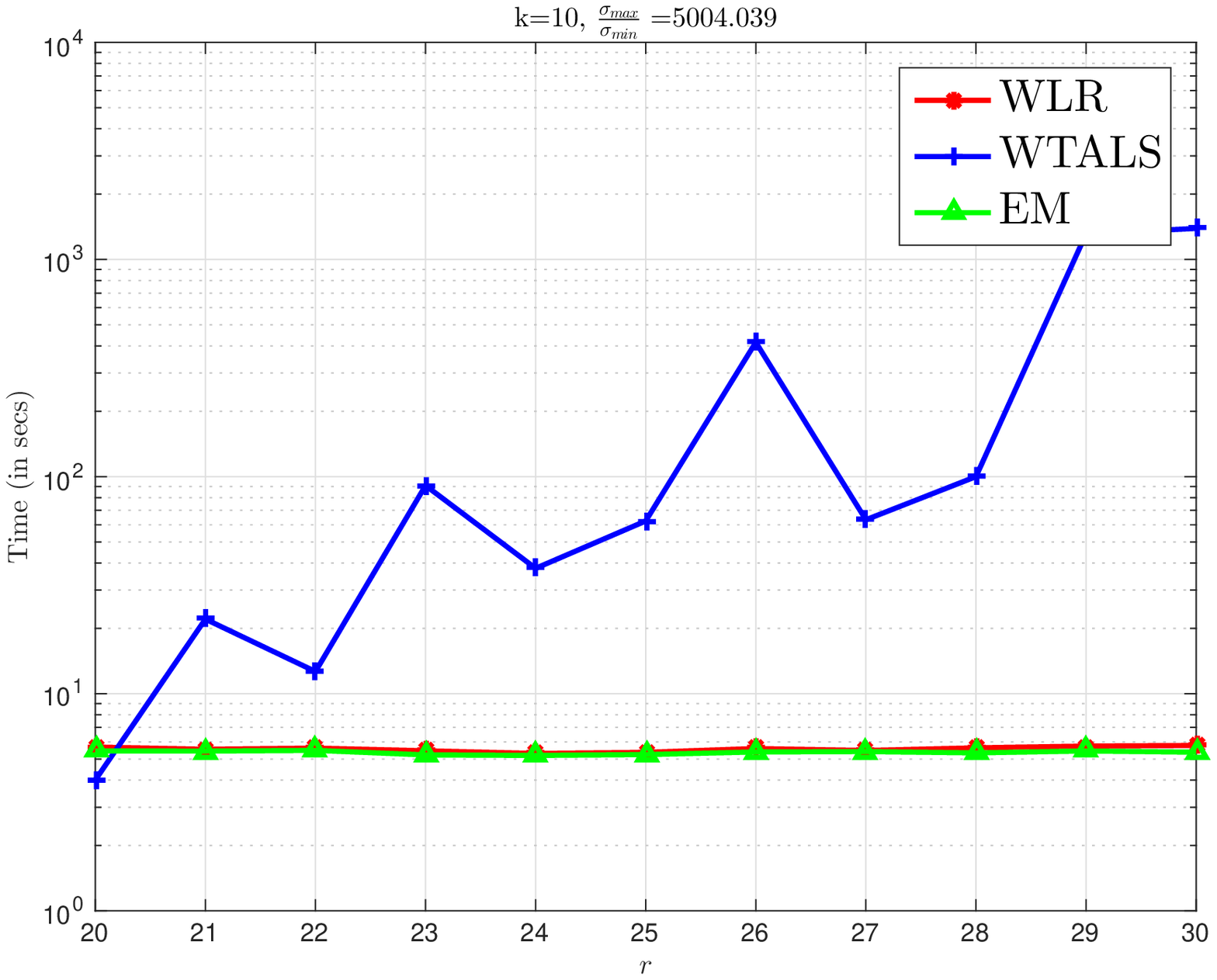}
		\caption{}
	\end{subfigure}
	\caption{Comparison of WLR with other methods when $\kappa(A)$ large:~(a)~$r$ versus RMSE,~(b)~$r$ versus time. We have $\kappa(A)= 5.004\times10^3$, $r=[20:1:30]$,  $k=10$, and $(W_1)_{ij}\in[50,1000]$.}
\end{figure}
\begin{figure}[H]
	\centering
	\begin{subfigure}[c]{.495\textwidth}
		\centering
		\includegraphics[width=\textwidth]{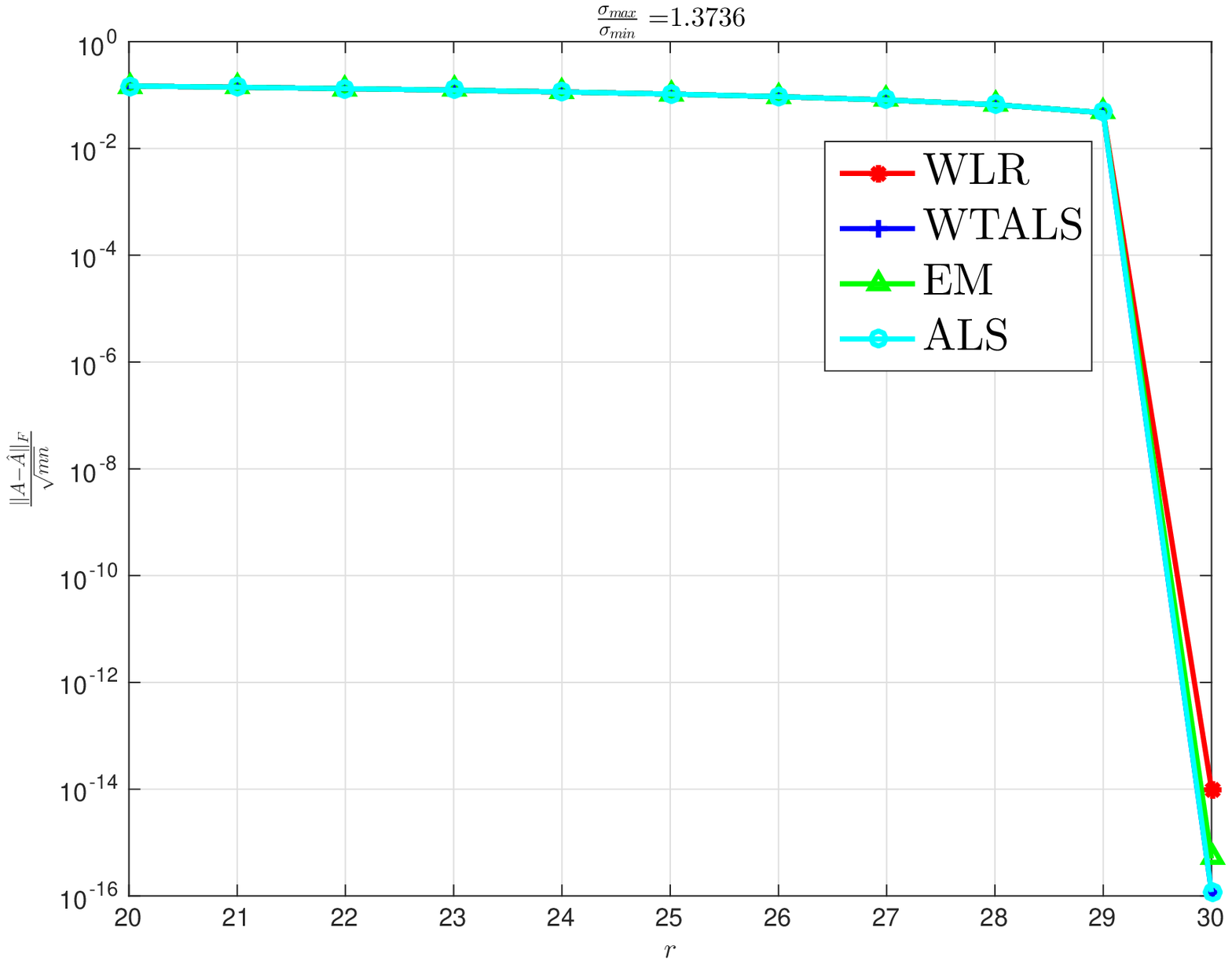}
		\caption{}
	\end{subfigure}
	\begin{subfigure}[c]{.495\textwidth}
		\centering
		\includegraphics[width=\textwidth]{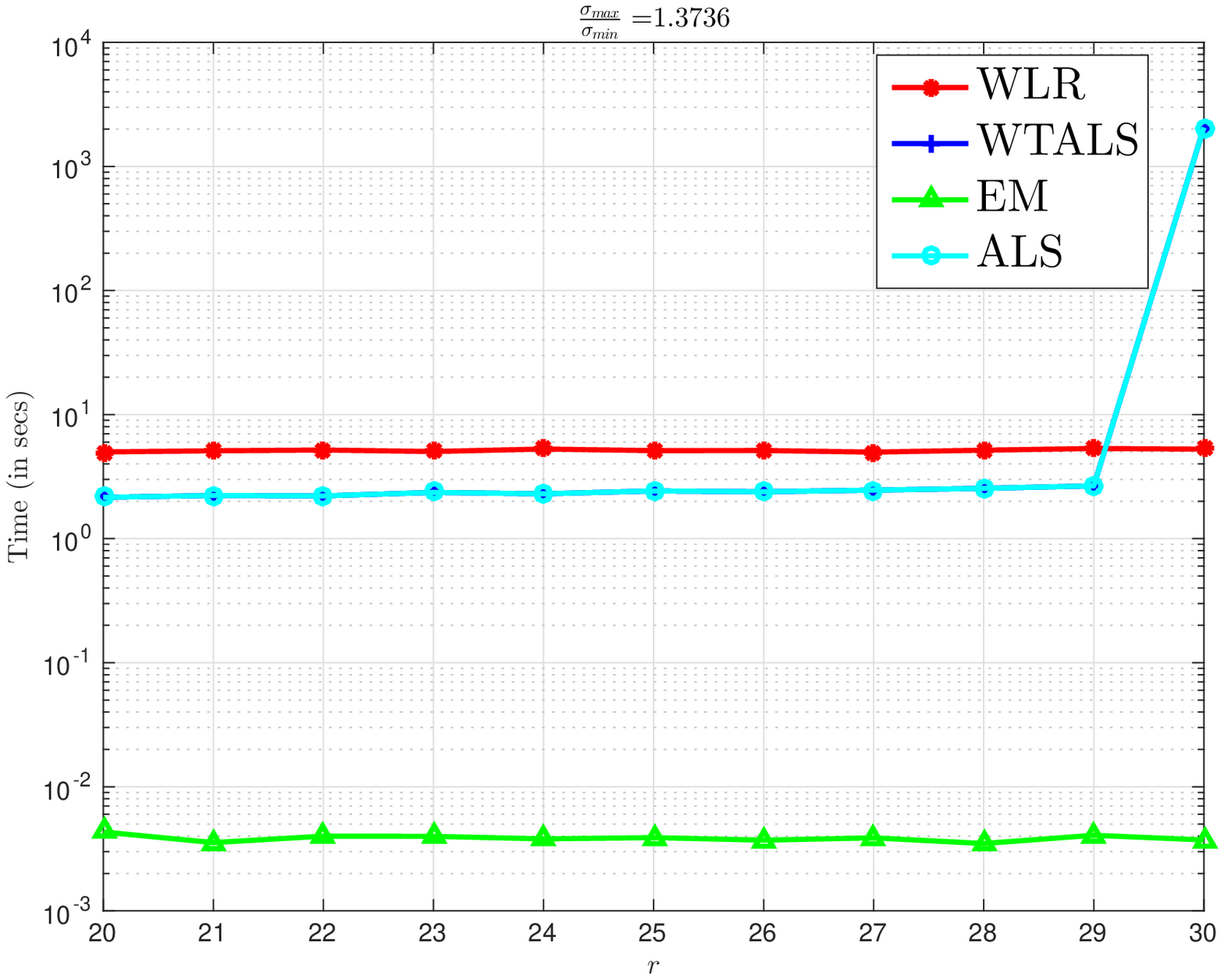}
		\caption{}
	\end{subfigure}
	\caption{Comparison of WLR with other methods for $k=0$ and $\kappa(A)$ small:~(a)~$r$ versus RMSE,~(b)~$r$ versus time. We have $\kappa(A)=1.3736$, $r=[20:1:30]$.}
\end{figure}
To implement the EM method, as mentioned in~\cite{srebro}, first we rescale the weight matrix to $W_{EM}=\frac{1}{\max_{ij}(W_1)_{ij}}(W_1\;\;\mathbbm{1})$. For a given threshold of weight bound~$\epsilon_{EM}$, we initialize $X$ to a zero matrix if $\min_{ij}(W_{EM})_{ij}\le \epsilon_{W_{EM}}$, otherwise we initialize $X$ to $A$.~Initialization for WLR is same as specified in Section~5.2.~To obtain the accurate result we run each experiment 10 times and plot the average outcome.~Both RMSE and computational time are plotted in logarithmic scale along $Y$-axis.~Figure 5.5 and 5.6 indicate that WLR is more efficient in handling bigger size matrices than  WTALS~\cite{markovosky} with the comparable performance measure.~This can be attributed by the fact that WTALS uses a weight matrix of size $mn\times mn$ for the given input size $m\times n$, which is both memory and time inefficient.~On the other hand, Figure~5.5 and 5.6 support the fact that as mentioned in~\cite{srebro}, the EM method is computationally effective, however in some cases might converge to a local minimum instead of global.

\subsubsection{Performance comparison for $k=0$~(Alternating Least Squares)}
For $k=0$ we set the weight matrix as $W=\mathbbm{1}$ for all weighted low-rank approximation algorithm.~Moreover, we include the classic alternating least squares algorithm to compare between the accuracy of the methods.~As specified in Section 5.5.1, the stopping criterion for all weighted low-rank algorithms are kept the same and RMSE is used for performance measure.~We run each experiment 10 times and plot the average outcome.~Figure 5.7 and 5.8 indicate that WLR has comparable performance.~However,~the standard ALS, WTALS, and EM method is more efficient than WLR, as for $W=\mathbbm{1}$ case, each method uses SVD to compute the solution.
\begin{figure}[H]
	\centering
	\begin{subfigure}[c]{.495\textwidth}
		\centering
		\includegraphics[width=\textwidth]{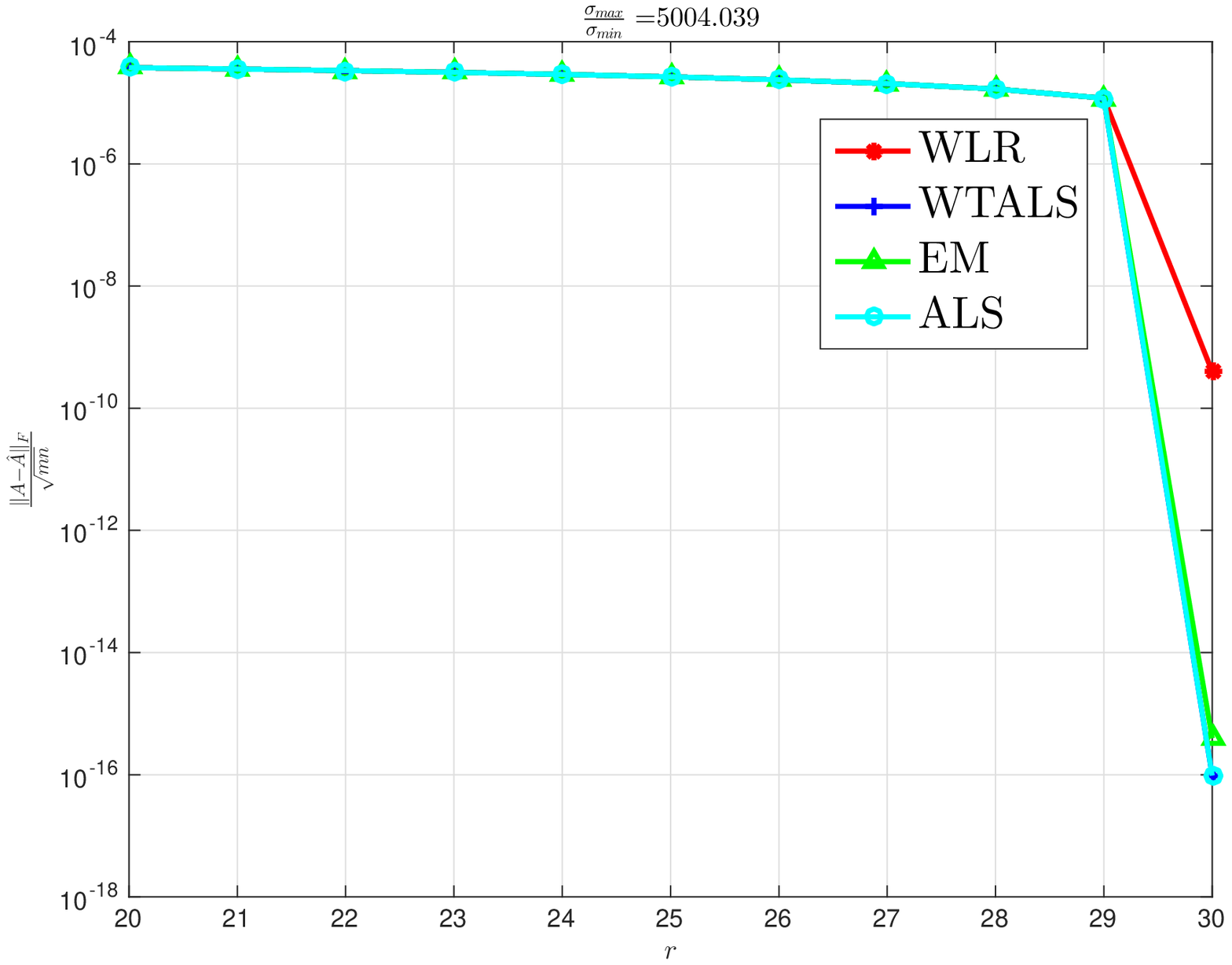}
		\caption{}
	\end{subfigure}
	\begin{subfigure}[c]{.495\textwidth}
		\centering
		\includegraphics[width=\textwidth]{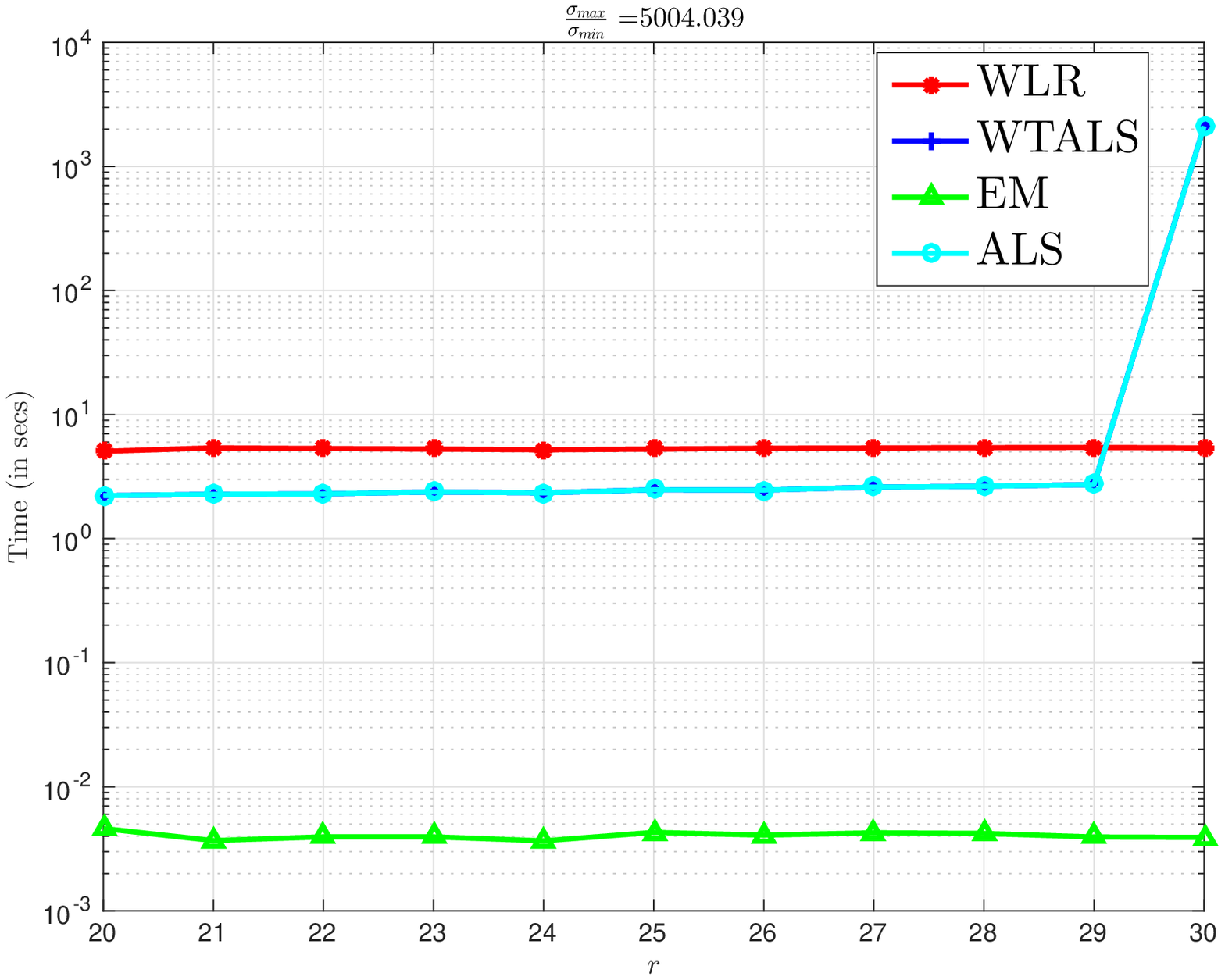}
		\caption{}
	\end{subfigure}
	\caption{Comparison of WLR with other methods for $k=0$ and $\kappa(A)$ large:~(a)~$r$ versus RMSE,~(b)~$r$ versus time. We have $\kappa(A)=5.004\times10^3$, $r=[20:1:30]$.}
\end{figure}
\subsubsection{Performance compare to constrained low-rank approximation of Golub-Hoffman-Stewart}
As mentioned in our analytical results, one can expect, with appropriate conditions, the solutions to~(\ref{hadamard problem}) will converge and the limit is $A_G$, the solution to the constrained low-rank approximation problem by Golub-Hoffman-Stewart. We now show the effectiveness of our special weighted algorithm compare to other state-of-the-art weighted low rank algorithms when $(W_1)_{ij}\to \infty,$ and $W_2=\mathbbm{1}$.~In this section, the weights in $W_1$ are chosen to be large to show effectiveness of our algorithm for large weighted case in the first block.~SVD is used for calculating $A_G$, the solution to~(\ref{golub's problem}), when $A=(A_1\;\;A_2)$, for varying $r$ and fixed $k$. Considering $A_G$ as the true solution we use the RMSE measure $\|A_G-\hat{A}\|_F/\sqrt{mn}$ as the performance metric, where $\hat{A}\in\mathbb{R}^{m\times n}$ is the low-rank approximation of $A$ obtained by different weighted low-rank algorithm. From Figure 5.9 it is evident that WLR has the superior performance compare to the other state-of-the-art general weighted low-rank approximation algorithms when $W_1\to\infty$.
\begin{figure}[H]
	\centering
	\begin{subfigure}[c]{.495\textwidth}
		\centering
		\includegraphics[width=\textwidth]{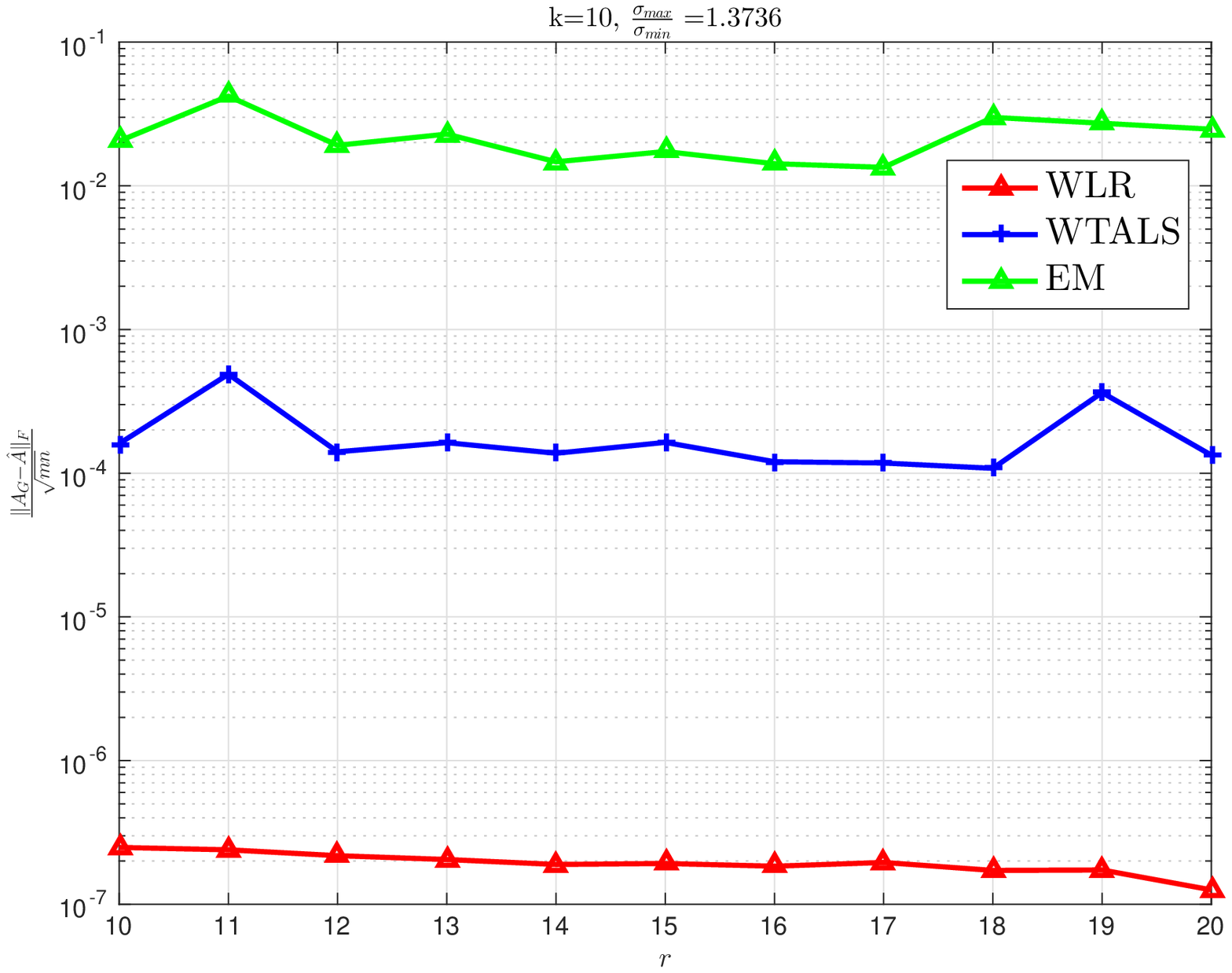}
		\caption{}
	\end{subfigure}
	\begin{subfigure}[c]{.495\textwidth}
		\centering
		\includegraphics[width=\textwidth]{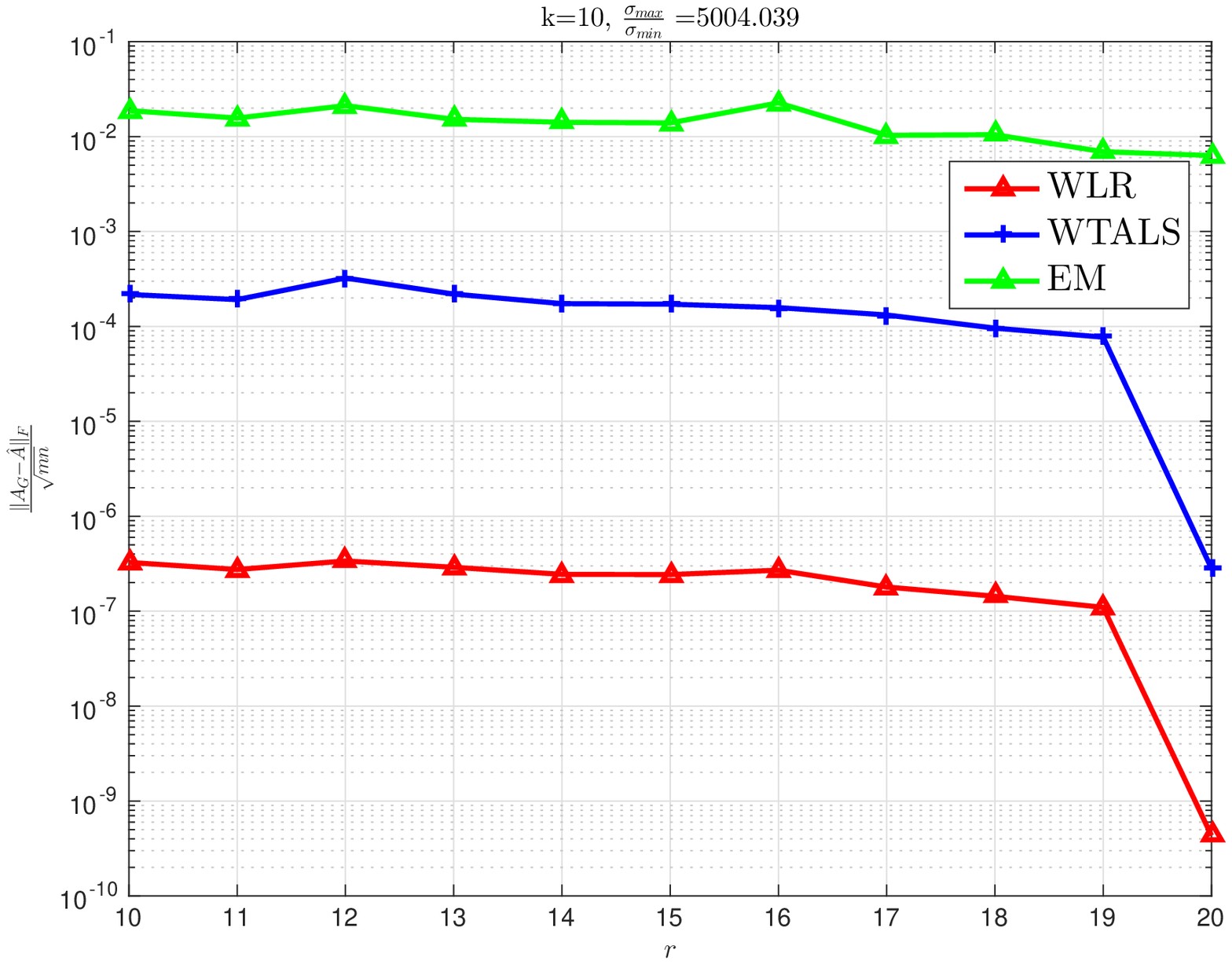}
		\caption{}
	\end{subfigure}
	\caption{$r$ vs $\|A_G-\hat{A}\|_F/\sqrt{mn}$ for different methods, $(W_1)_{ij}\in[500,1000], W_2=\mathbbm{1}$, $r=10:1:20$, and $k=10$:~(a)~$\kappa(A)$ small,~(b)~$\kappa(A)$ large.}
\end{figure}
To conclude, WLR has comparable or superior performance compare to the existing general weighted low-rank approximation algorithms for the special case of weight with fairly less computational time. Even when the columns of the given matrix are not constrained, that is $k=0$, its performance is comparable to the standard ALS. Additionally, WLR and EM method can easily handle bigger size matrices and easier to implement for real world problems.~On the other hand, WTALS requires more computational time and is not memory efficient to handle large scale data~(see table 5.1).~Another important feature of our algorithm is that it does not assume any particular condition about the matrix $A$ and performs equally well in every occasion.

\begin{table}[H]
	\caption{Average computation time~(in seconds) for each algorithm to converge to $A_G$}
	\begin{center}
		\begin{tabular}{|c|c|c|c|}
			\hline $\kappa(A)$ & WLR & EM & WTALS  \\
			\hline 1.3736  & 6.5351 & 6.1454  & 205.1575 \\
			\hline  $5.004\times10^3$ & 8.8271 & 8.1073&107.0353 \\
			\hline
		\end{tabular}
	\end{center}
\end{table}

\section*{Acknowledgments}
We would like to thank the anonymous referees and the associate editor Dr. Ivan Markovsky for providing many useful references, and for their valuable comments and suggestions which improved the presentation and results of this paper. We would also like to thank Dr.~Afshin Dehghan, at the Center for Research in Computer Vision, University of Central Florida for his invaluable comments in the numerical results section.		

\end{document}